\numberwithin{equation}{section}
\theoremstyle{plain}
\newtheorem{theorem}[equation]{Theorem}
\newtheorem*{mainthm}{Main Theorem}
\newtheorem{lemma}[equation]{Lemma}
\newtheorem{prop}[equation]{Proposition}
\newtheorem{cor}[equation]{Corollary}
\theoremstyle{remark} 
\newtheorem{remark}[equation]{Remark}
\theoremstyle{defn}
\newtheorem{defn}[equation]{Definition}
\newcommand\addtag{\refstepcounter{equation}\tag{\theequation}}
\newcommand{\bP}{\mathbb{P}}
\newcommand{\bA}{\mathbb{A}}
\newcommand{\bR}{\mathbb{R}}
\newcommand{\bN}{\mathbb{N}}
\newcommand{\bZ}{\mathbb{Z}}
\newcommand{\bC}{\mathbb{C}}
\newcommand{\calA}{\mathcal{A}}
\newcommand{\calB}{\mathcal{B}}
\newcommand{\calO}{\mathcal{O}}
\newcommand{\calC}{\mathcal{C}}
\newcommand{\calM}{\mathcal{M}}
\newcommand{\calN}{\mathcal{N}}
\newcommand{\calX}{\mathcal{X}}
\newcommand{\calD}{\mathcal{D}}
\newcommand{\calU}{\mathcal{U}}
\newcommand{\calL}{\mathcal{L}}
\newcommand{\calV}{\mathcal{V}}
\newcommand{\Hom}{\mathrm{Hom}}
\newcommand{\Spec}{\mathrm{Spec}}
\newcommand{\Al}{AB(V)}
\newcommand{\Alm}{AB(V)_{\textrm{main}}}
\newcommand{\kg}{{\calA\calB}(V)}
\newcommand{\Chow}{[V \sslash_C H]}
\tikzset{node distance=2cm, auto}
\newcommand{\sslash}{\mathbin{/\mkern-6mu/}}
\begin{document}

\title[]{Logarithmic Stable Toric Varieties and Their Moduli}
\author{Kenneth Ascher}
\email{kenascher@math.brown.edu}
\address{Mathematics Department, Brown University, 151 Thayer Street, Providence, RI 02912}
\author{Samouil Molcho}
\email{Samouil.Molcho@colorado.edu}
\address{University of Colorado, Boulder, Boulder, Colorado 80309-0395, USA}
\thanks{Research of Ascher and Molcho supported in part by funds from NSF grant DMS-1162367.}
\keywords{log stable maps, Chow quotients, stable toric varieties, toric stacks}
\subjclass[2010]{Primary: 14D23, 14J10, 14M25  Secondary:  14N35}
\begin{abstract} The Chow quotient of a toric variety by a subtorus, as defined by Kapranov-Sturmfels-Zelevinsky, coarsely represents the main component of the moduli space of stable toric varieties with a map to a fixed projective toric variety, as constructed by Alexeev and Brion. We show that, after endowing both spaces with the structure of a logarithmic stack, the spaces are isomorphic. Along the way, we construct the Chow quotient stack and demonstrate several properties it satisfies.   \end{abstract}

\maketitle

\section{Introduction}
We work over an algebraically closed field of characteristic 0. \\

Alexeev and Brion construct a moduli stack $AB(V)$  parametrizing finite torus equivariant maps from stable (also known as broken) toric varieties $X$ to a fixed projective variety $V$  \cite{ab}, generalizing Alexeev's construction of the moduli space of stable toric pairs: pairs $(X, B)$ where $X$ is a stable toric variety and $B$ is a divisor satisfying certain numerical and singularity criteria \cite{ale}. As is common with moduli spaces of higher dimensional varieties, the modular compactification has several irreducible components. Adding a logarithmic structure often gives one hope towards isolating the main component of a compactification. In \cite{Olsson}, Olsson introduces logarithmic geometry as a method for compactifying moduli spaces of abelian varieties. Along the way, he enriches Alexeev's moduli space of stable toric pairs with a logarithmic structure and shows that this stack carves out the main component of Alexeev's space of stable toric pairs.

In this paper we will give the space of maps $\Al$ an analogous logarithmic structure, $\kg$ in the case where the target variety is toric, and show that $\kg$ is isomorphic  as logarithmic stacks to $[V \sslash_C H]$, the Chow quotient of a toric variety by a subtorus as defined in \cite{ksz} endowed with a logarithmic stack structure.  Not only does this construction give an explicit description of a stack of logarithmic stable maps, but it also isolates the main component of $\Al$, just as in the case of stable toric pairs.
\begin{mainthm} Let $V$ be a projective toric variety and let $H \subset T$ be a subtorus. The stack $\kg$ parametrizing logarithmic maps of stable toric varieties to $V$ is isomorphic to the Chow stack $[V \sslash_C H]$. In particular, $\kg$ is a logarithmically smooth, proper, and irreducible algebraic stack with finite diagonal. Moreover, $\kg$ is isomorphic to the normalization of the main component of Alexeev-Brion's space, $\Alm$. \end{mainthm}

Along the way, we construct the Chow quotient stack, a moduli stack parametrizing orbits of the action of a subtorus on a toric variety, and prove several properties it enjoys. In particular, the following universal property of the Chow quotient stack is a crucial element of the proof of our main theorem:

\begin{theorem}
Let $V$ be a projective toric variety. The Chow quotient stack is the terminal object of $\calC_V$, the category of toric families of toric $H$-stacks mapping to $V$. \end{theorem}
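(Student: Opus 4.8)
The plan is to identify the universal object of $\calC_V$ explicitly and to show that every object admits a unique morphism to it. Recall that $\Chow$ carries a universal toric family $\calU \to \Chow$ together with its equivariant map $\calU \to V$, and that this datum is itself an object of $\calC_V$. To prove that $\Chow$ (via this universal family) is terminal, I must show that for every object $\pi \colon \calX \to S$ of $\calC_V$ there is a morphism to the universal family, and that it is unique; such a morphism amounts to a base map $f \colon S \to \Chow$ together with an isomorphism $\calX \cong f^{*}\calU$ of toric families over $V$.

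First I would construct $f$. The essential point is that every object of $\calC_V$ is \emph{toric}, hence rigidly determined by combinatorial data: the fibers, their degenerations, and the equivariant map to $V$ are all encoded by a polyhedral fan refining the structure induced by projecting the fan of $V$ along $T \to T/H$, together with the bookkeeping of which cone maps to which cone of $V$. This is precisely the data classified by the Chow fan of Kapranov--Sturmfels--Zelevinsky: its cones index the combinatorial types of limits of $H$-orbits, and a toric family over $S$ assigns such a type to each stratum of $S$ compatibly under specialization. This assignment is a morphism of (stacky) fans from the fan of $S$ to the Chow fan, which is the same as the toric morphism $f \colon S \to \Chow$ we seek. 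At the level of coarse moduli this $f$ is the classifying map to the Chow variety determined by the relative Chow form of $\calX \to S$, and the stacky and logarithmic structures lift it canonically.

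I would then establish existence and uniqueness together. Both $\calX$ and $f^{*}\calU$ are toric families that, fiber by fiber, carry the same combinatorial type and the same equivariant map to $V$; since a toric family is determined by this data, they are canonically isomorphic over $V$, and the isomorphism is unique because any toric automorphism inducing the identity on the fan is the identity. Uniqueness of $f$ itself is then immediate, as the map of fans it induces is forced by the combinatorics of $\calX \to S$.

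The hard part will be the passage from fiberwise combinatorics to an honest morphism of stacks. Two points demand care. First, one must verify that the pointwise assignment of combinatorial types glues into a single morphism $S \to \Chow$, functorially in $S$; this is where the global structure of a \emph{toric family} (as opposed to a mere collection of fibers) does the work, and where the rigidity of toric objects — in particular the finiteness of their automorphism groups together with the logarithmic structure — rules out spurious automorphisms and makes the classifying map unambiguous. Second, one must confirm that $\calC_V$ is stable under the pullback used to form $f^{*}\calU$ and that this pullback agrees with the fiber product, so that the universal family genuinely represents the category. I expect essentially all the content to reside in this gluing and descent step, with the KSZ Chow-form description supplying the coarse classifying map and the toric--combinatorial dictionary upgrading it to the level of logarithmic stacks.
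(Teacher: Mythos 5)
There is a genuine gap, and it starts with the formulation of what must be proved. In $\calC_V$ a morphism from a family $\calX \to S$ to the universal family $\calU \to \Chow$ is merely a commutative square, $H$-compatible and commuting with the maps to $V$; it is \emph{not} required to be cartesian. Your proposal instead asserts that such a morphism ``amounts to a base map $f \colon S \to \Chow$ together with an isomorphism $\calX \cong f^{*}\calU$,'' i.e.\ you are proving that $\Chow$ is a fine moduli space for objects of $\calC_V$. That statement is false: the fibers of $\calU$ are the full, connected broken cycles $E_{\kappa}$, whereas an object of $\calC_V$ can be, for example, a single $H$-orbit closure $\overline{Hx} \to \Spec(\bC)$ sitting inside a broken cycle with several components (this satisfies flatness, properness, equidimensionality, reduced fibers, and $H$-equivariance of the map to $V$). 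Such a family admits a map to $\calU$ over a point of $\Chow$, but it is not isomorphic to the pullback of $\calU$ along any map. Consequently your combined ``existence and uniqueness'' step collapses, since it is premised on $\calX$ and $f^{*}\calU$ having the same fibers, which need not hold; uniqueness has to be argued directly from the fan-level rigidity of toric morphisms, not from fiberwise agreement.

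Separately, the construction of $f$ itself --- the real content of the theorem --- is only gestured at. The paper's proof converts the family into a square of lattices and fans and then must show three nontrivial things, each tied to a specific hypothesis in the definition of $\calC_V$: equidimensionality, via Karu's criterion, forces cones of $\Phi$ to map \emph{onto} cones of $\Gamma$; properness forces the preimages of a vector $v$ to cover all of $v + L$, which is what makes the sets $N\big(i(v_1)\big)$ and $N\big(i(v_2)\big)$ coincide for $v_1, v_2$ in the interior of the same cone, so that $i$ carries cones of $\Gamma$ into cones of the Chow fan $G$; and the reduced-fibers hypothesis gives surjectivity of the monoid maps $\Phi_s \to \Gamma_k$, which is needed to factor the submonoids $\Gamma_k$ through $G_{\kappa} = \bigcap_{\sigma \in N_0(\kappa)} p(N_{\sigma})$. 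Your proposal never engages this monoid-level (stacky) data at all --- the submonoids $N_{\sigma} \subset \sigma \cap N$ are exactly what distinguishes the toric stack from its coarse space, and the appeal to ``the KSZ Chow-form description'' of the coarse classifying map cannot see them. Without these steps the assignment of combinatorial types to strata of $S$ is an expectation, not a proof that it defines a morphism of toric stack data.
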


Furthermore, it carries a universal family:

\begin{theorem} Fibers of the universal morphism $\calU \to \Chow$ are connected stable toric varieties. \end{theorem}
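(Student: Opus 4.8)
The plan is to reduce the statement to the combinatorial description of the Chow quotient due to Kapranov--Sturmfels--Zelevinsky and then check the resulting fibers against Alexeev's axioms for a stable toric variety. First I would identify the fibers explicitly. Over the dense open locus where $\Chow$ agrees with the honest geometric quotient, the fiber of $\calU$ over a point $[x]$ is the orbit closure $\overline{H\cdot x}\subset V$, an irreducible projective toric variety for the image of $H$; this is connected and is trivially a stable toric variety. The content therefore lies over the boundary, and I would pin down that the fiber of $\calU$ over a boundary point is exactly the corresponding limit cycle, carried with the scheme and logarithmic structure induced from $\calU$. Here I would use that $\calU\to\Chow$ is a toric (toroidal) morphism of log stacks, so that the geometry of every fiber is governed by a coherent polyhedral subdivision of the relevant moment polytope $P$: a boundary point determines a subdivision $\mathcal S$ of $P$, and the fiber acquires one irreducible component $X_\sigma$ for each maximal cell $\sigma\in\mathcal S$.

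For connectedness I would argue combinatorially rather than by specialization. Since $\calU\to\Chow$ is proper and toric, the components $X_\sigma$ and $X_{\sigma'}$ share a torus-invariant divisor precisely when $\sigma$ and $\sigma'$ meet along a codimension-one face, so the dual graph of the fiber contains the facet-adjacency graph of $\mathcal S$. Because $\mathcal S$ is a polyhedral subdivision of the \emph{connected convex} polytope $P$, this adjacency graph is connected (one cannot partition the maximal cells into two families whose unions meet only in codimension $\ge 2$ without disconnecting $P$), and hence the fiber is connected. As a cross-check one may instead note that each fiber is a flat limit of the irreducible generic orbit closures and invoke the preservation of connectedness under specialization in a proper flat family.

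To see that each fiber is a stable toric variety I would verify Alexeev's axioms against $\mathcal S$. Each maximal cell $\sigma$ yields a projective toric variety $X_\sigma$ whose normalization is the toric variety of $\sigma$; gluing $X_\sigma$ to $X_{\sigma'}$ along the stratum indexed by $\sigma\cap\sigma'$ produces a union with finitely many orbits for the quotient torus $T/H$, whose moment image is $P$ subdivided by $\mathcal S$. The one input requiring genuine work is that this union is \emph{reduced} and \emph{seminormal}, since a priori the limit is only a cycle. My preferred route is to extract this from the logarithmic geometry: the morphism $\calU\to\Chow$ is log smooth, and a log smooth morphism has reduced fibers with toroidal singularities over a log point, which forces multiplicity one on every component and seminormality along each toric stratum. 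Assembling (a) each component is toric, (b) the gluing is along torus orbits, (c) reducedness and seminormality, and (d) the $T/H$-action with finitely many orbits then gives precisely Alexeev's definition.

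The main obstacle, as indicated, is the reducedness and seminormality of the fibers: translating the cycle-theoretic limit into an honest scheme and showing that the structure inherited from $\calU$ is exactly the seminormal, multiplicity-one union of the $X_\sigma$, rather than a scheme with multiple or embedded components. A secondary technical point is to run the argument for toric $H$-\emph{stacks} and to track the logarithmic structure, so that each fiber is a stable toric variety on the nose rather than merely after normalization; here I would lean on the terminal-object description of $\Chow$ in $\calC_V$ to transport the verified fiberwise structure along the universal morphism.
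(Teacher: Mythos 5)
Your skeleton largely matches the paper's own proof (Subsection \ref{6}): there the generic components of a fiber are identified with the cones of $F'$ mapping isomorphically onto a cone $\tau$ of $G$; Lemma \ref{cones} shows that each cone of relative dimension one (i.e.\ each toric divisor in the fiber) has exactly one or two faces mapping isomorphically onto $\tau$; and connectedness is proved in Lemma \ref{cones2} by precisely your adjacency-graph argument, phrased for the polyhedral complex formed by the fibers of $F' \to G$ over a point of $\tau^{\circ}$ rather than for a subdivision of the moment polytope. So your decomposition of the fiber and your connectedness argument are essentially the paper's.

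The genuine gap is in the step you yourself single out as the crux: reducedness (and with it seminormality and normality of the components). Your mechanism, that ``a log smooth morphism has reduced fibers over a log point,'' is false. Log smoothness does not bound multiplicities: the toric morphism $\bA^1 \to \bA^1$, $x \mapsto x^2$, is log smooth for the toric log structures, and its base change to the standard log point at the origin is the log smooth but non-reduced scheme $\Spec \bC[x]/(x^2)$. Reduced fibers require the morphism to be \emph{saturated} in addition to integral; combinatorially, the lattice points of each cone $\sigma' \in F'$ mapping onto $\tau$ must surject onto the monoid $Q_\tau$. This failure is not hypothetical here: it is exactly what goes wrong if one replaces $\Chow$ by the coarse KSZ quotient $V \sslash_C H$. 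The family over the coarse variety is still a toric, hence log smooth, morphism, yet its fibers are non-reduced in general --- that is precisely what the multiplicities $c(\sigma,L)$ in the Chow cycles $E_\kappa$ record --- and your argument, which never uses the stack structure, would ``prove'' reducedness for that family as well. The monoids $Q_{\kappa} = \bigcap_{\sigma \in \calN_0(\kappa)} p(\sigma \cap N)$ are chosen exactly to restore saturation, and the paper accordingly obtains reducedness not from log smoothness but from this combinatorial lattice condition, citing Lemma 6 of \cite{sam} (cf.\ Lemma 5.2 of \cite{ak}) in the Proposition of the subsection on the universal family, before the corollary is stated. To repair your proof, replace the appeal to log smoothness by a verification that $p(N_{\sigma'}) = Q_\tau$ for every such $\sigma'$, which is where the definition $N_{\sigma'} = N_\sigma \cap p^{-1}(Q_\tau)$ of the universal family's monoids enters; your fallback of invoking the terminal-object property (Theorem \ref{univ}) cannot substitute for this, since that theorem presupposes the reduced-fiber property rather than producing it.
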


Finally, in a similar vein to Remark 1.21 \cite{gs}, we prove (c.f. Theorem 3.19):

\begin{theorem}The dual cone of the basic monoid is the moduli space of tropical broken toric varieties of the given type. \end{theorem}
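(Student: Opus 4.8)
The plan is to exhibit an isomorphism of rational polyhedral cones between $\Hom(Q, \bR_{\geq 0})$, where $Q$ denotes the basic monoid attached to a fixed combinatorial type, and the space of tropical broken toric varieties of that type. I would begin by pinning down both objects. On the algebraic side, fix a type $\tau$ --- the combinatorial data (a polyhedral decomposition $\mathcal{S}$ of the relevant polytope, compatible with the map to $V$ and the $H$-action) of a broken toric variety over $V$ --- and recall from the construction of the log structure on $\kg$ the presentation of the associated basic monoid $Q = Q_\tau$; its characteristic description is as the monoid whose generators record the bending (slope discontinuity) across the codimension-one walls of $\mathcal{S}$ and whose relations are the closing-up conditions imposed around each codimension-two cell. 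On the tropical side, I would define a tropical broken toric variety of type $\tau$ to be an integral-affine polyhedral complex of type $\tau$ equipped with a compatible convex piecewise-linear height function $\phi$, taken modulo globally affine functions (and functions pulled back from the base); the set of such $\phi$ inducing $\mathcal{S}$, or a coarsening of it, is naturally a rational polyhedral cone $\sigma_\tau$, which I would verify to be exactly the corresponding cone of the secondary fan, i.e. of the fan of $\Chow$ under the identification of the Main Theorem.

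With both objects in hand, I would construct the comparison map in both directions. Given a tropical object $\phi \in \sigma_\tau$, pairing $\phi$ against each wall generator of $Q$ returns the nonnegative bending parameter of $\phi$ at that wall; convexity of $\phi$ guarantees nonnegativity, so this assignment extends to a monoid homomorphism $h_\phi \colon Q \to \bR_{\geq 0}$, that is, an element of $\Hom(Q, \bR_{\geq 0})$. Conversely, a homomorphism $h \colon Q \to \bR_{\geq 0}$ prescribes a nonnegative bending parameter at every wall, and I would reconstruct a height function $\phi_h$ by integrating these slopes across $\mathcal{S}$; this is well defined modulo affine functions precisely because the defining relations of $Q$ are the cocycle conditions that make the integration path-independent around each codimension-two cell. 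These two assignments are manifestly inverse to one another and linear on each cone, and they carry the lattice $\Hom(Q, \bN)$ to the integral tropical objects, so they assemble into an isomorphism of rational polyhedral cones $\Hom(Q, \bR_{\geq 0}) \cong \sigma_\tau$.

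The main obstacle, and the step deserving the most care, is establishing that the abstractly defined basic monoid --- produced through the universal (minimal) property of the log structure on $\kg$, as a colimit of monoids in the style of Gross--Siebert --- coincides on the nose with the concretely presented monoid dual to the secondary cone. Concretely, this amounts to proving that the bending pairing is perfect: that no relations beyond the codimension-two closing-up conditions are forced, and that no generators beyond the codimension-one walls are needed. This is the exact analog of the balancing/cocycle verification in Remark 1.21 of \cite{gs}, and it is where the genuine combinatorial content lies; I expect to handle it by a local analysis at each codimension-two cell of $\mathcal{S}$, showing that the local monoid of slopes is freely generated modulo the single balancing relation, and then globalizing via the compatibility of these local pictures along the decomposition. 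Once perfectness of the pairing is secured, the identification of $\sigma_\tau$ with the moduli of tropical broken toric varieties of type $\tau$ is immediate from the definition, completing the proof.
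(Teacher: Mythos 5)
There is a genuine gap here, and it is the one you yourself flag as ``the main obstacle'': the identification of the abstractly defined basic monoid with your concretely presented bending monoid is not a preliminary normalization --- it \emph{is} the theorem, and your proposal defers it (``I expect to handle it by a local analysis at each codimension-two cell\dots'') rather than carrying it out. Worse, the presentation you propose to ``recall'' (generators given by bendings across codimension-one walls, relations given by closing-up conditions around codimension-two cells) is not available in this paper's framework: the monoid in question is $Q_\tau = \bigcap_{\sigma \in N_0(\tau)} p(N_\sigma)$, the monoid attached to the cone $\tau$ in the Chow stack datum $(G, Q_\tau, Q)$, whose dual $\Hom(Q_\tau,\bN)$ is the chart of the logarithmic structure; no generators-and-relations description is given a priori, and producing one from this definition is equivalent to the statement being proved. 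Similarly, your identification of the cone of convex PL functions modulo affine functions with the corresponding cone of the fan of $\Chow$ is a fiber-polytope/KSZ-type statement that is invoked, not established. Finally, note that the real content of the theorem is integral, not real: as real cones both sides are simply $\tau$, so what must be pinned down is the monoid $Q_\tau$ itself, which is in general strictly smaller than $\tau \cap Q$ --- this discrepancy is exactly the stack structure of $\Chow$. Your bending/integration correspondence is only argued at the level of real cones; the assertion that it matches the integral structures is, once again, the deferred ``perfectness'' step.

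For comparison, the paper's proof (Theorem 3.19) works entirely on the fan side and never passes through height functions. It exhibits mutually inverse monoid maps between $Q_\tau$ and the monoid of tuples $(v_0,\dots,v_n,m_\sigma)$ with $v_i \in \tau_i$ for $\tau_i \in N_0(\tau)$ and $v_{i(\sigma)}-v_{j(\sigma)} = m_\sigma u_\sigma$, i.e.\ the vertex positions and edge lengths of the tropical object. The forward map sends $v \in Q_\tau$ to its lifts $v_i$, which are unique by the definition of $N_0(\tau)$, with $m_\sigma$ well defined by the one-or-two-faces statement (Lemma 3.11). The inverse sends a tuple to the common image $p(v_0)=\cdots=p(v_n)$; well-definedness uses that adjacent lifts differ by a multiple of $u_\sigma$ together with the comparability of any two cones mapping isomorphically to $\tau$ (Lemma 3.14), membership in $\tau$ uses the defining description of the Chow fan via the sets $N(\psi)$, and membership in $Q_\tau$ uses precisely the intersection formula $Q_\tau = \bigcap_{\tau_i} p(N_{\tau_i})$. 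These three verifications are the substance of the proof, and they are exactly what your path-independence and perfectness claims would have to reproduce; your Legendre-dual picture could plausibly be pushed through, but as written it assumes the two identifications that constitute the argument.
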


We will now sketch some previous results for the one-dimensional case where the subtorus $H = \bC^*$.

\subsection{Previous work}

In \cite{ksz},  the Chow quotient $V \sslash_C  H$ is constructed, where $V$ is a toric variety
and $H$, thought of as a $k$-parameter subgroup, is a subtorus of the torus $T$. In fact, it is also shown that $V \sslash_C H$ is a toric variety \footnote{In this paper we adopt the convention that the Chow quotient is a normal toric variety.} and the fan structure is described purely using combinatorial data from the fan of $V$ and the subtorus $H$. In \cite{cs}, it was shown that if one considers $H = \bC^*$ to be a one-parameter subgroup, then the Chow quotient $V \sslash_C \bC^*$ coincides with the coarse moduli space of $K_{\Gamma}(V)$, the moduli space of logarithmic stable maps constructed independently by Chen \cite{chen} (in the rank one case),  Abramovich-Chen \cite{ac} and Gross-Siebert \cite{cs}. To show this isomorphism, the authors endow $V$ with its natural logarithmic structure as a toric variety and use the one-parameter subgroup $H$ to determine the discrete data $\Gamma$ specifying the stack structure on $K_{\Gamma}(V)$. This gives a somewhat different, explicit description of the moduli space of logarithmic stable maps.

Finally, in \cite{sam}, an additional level of structure is added to the Chow quotient, making $[V \sslash_C \bC^*]$ into a toric stack. One can obtain this structure explicitly by enriching the fan of $V \sslash_C \bC^*$ by adding natural combinatorial data that arises from $V$ and $H$. This toric stack is then shown to be isomorphic to the logarithmic stack $K_{\Gamma}(V)$, giving very explicit descriptions of several stacks of logarithmic stable maps. Thus, this paper serves to find a higher dimensional analogue of $K_{\Gamma}(V)$, generalizing the result where $H$ is a one-parameter subgroup.

We will begin by reviewing some of the techniques involved in the above mentioned constructions. We will, however, assume knowledge in logarithmic geometry. For background, we refer the reader to the survey \cite{hom}.

\subsection*{Outline} 
\begin{addmargin}[42pt]{0\linewidth}
\begin{itemize}
\item[Sec. 2 (Pg. \pageref{2})] Definition and construction of a toric stack, discussion of various notions of toric stacks appearing in the literature, and definition of a logarithmic stack
\item[Sec. 3 (Pg. \pageref{3})] Discussion of the KSZ construction of the Chow quotient variety, construction of the Chow quotient as a toric stack, and discussion of various properties of the Chow quotient stack
\item[Sec. 4 (Pg \pageref{4})] Definition of stable toric varieties, discussion of logarithmic structure and definition of the logarithmic stack parametrizing logarithmic stable maps of stable toric varieties
\item[Sec. 5 (Pg \pageref{5})] Proof of main theorem describing the equivalence of the two logarithmic stacks
\end{itemize}
\end{addmargin}

\subsection*{Notation} We will use $X$ to denote a stable toric variety in the sense of Alexeev and $V$ to describe its target -- a fixed projective toric variety. When we wish to discuss the scheme underlying a logarithmic scheme, we will use $\underline{X}$.  The notation $V \sslash_C H$ will denote the Chow quotient and $[V \sslash_C H]$ will represent the Chow quotient stack. Finally, we will use $\Al$ to denote Alexeev and Brion's stack of stable maps of toric varieties to $V$ and $\kg$ will denote the stack of logarithmic stable maps of toric varieties.

\section{Toric Stacks} \label{2}

There are several definitions of toric stacks appearing in the literature. Borisov-Chen-Smith first introduced toric stacks in \cite{bcs}, where they construct smooth Deligne-Mumford stacks arising from simplicial fans. The notion which we will primarily use in this paper, was first introduced by Tyomkin \cite{tyo} as a generalization of Borisov-Chen-Smith's construction. The work of Geraschenko and Satriano (see \cite{gs1} and \cite{gs2}) provides an extensive  description of several notions of toric stacks. In addition, they develop a theory of toric stacks that encompasses, among other theories, the work of both Borisov-Chen-Smith as well as Tyomkin. The main purposes of this section are to introduce toric stacks, and to show that the notion we adopt is present in the theory of Geraschenko-Satriano.

Geraschenko-Satriano define a toric stack to be the stack quotient of a normal toric variety $X$ by a subgroup $G$ of its torus $T_0$. In this case, the stack $[X/G]$ has a dense open torus $T = T_0/G$ and so a toric stack can be thought of as a stack with an action of a dense torus. The following definition which we will use, is adopted from  Tyomkin's construction of toric stacks, see Definition 4.1 in \cite{tyo} for more details.

\begin{defn} Toric stack datum is a triple $(F, N_{\sigma}, N)$ where $F$ is a fan in a lattice $N$ and for each cone $\sigma \in F$ we specify a finitely generated and saturated submonoid $N_{\sigma} \subset \sigma \cap N$. We require that if $\tau < \sigma$ is a face, then $N_{\tau} = \tau \cap N_{\sigma}$. We also require that for all maximal cones $\sigma$, the group $N^{\textrm{gp}}_{\sigma} \subset N$ has finite index. \end{defn}

We can realize a toric stack geometrically from this datum as follows. Since the inclusions $N^{\textrm{gp}}_{\sigma} \subset N$ have finite index,  they induce maps of tori $T(N^{\textrm{gp}}_{\sigma}) \to T(N)$ whose kernels $K_{\sigma}$ are finite subgroups of $N_{\sigma}^{\textrm{gp}}$.  Let $X_{\sigma}$ be the toric variety associated to the cone $\sigma$ in the lattice $N^{gp}_{\sigma}$. We then define $\calX_{\sigma}$ to be the stack quotient $[X_{\sigma} / K_{\sigma}] $ and thus the compatibility condition in the toric stack datum allows one to glue the $\calX_{\sigma}$ together to obtain a toric stack $\calX(F, N_{\sigma}, N)$. 

\begin{remark} In this paper, all maximal cones will be of full dimension. We do note, however, that the definition and realization above generalizes to situations to where this is not the case. We will not discuss this in this paper. \end{remark}

\begin{defn} A morphism of toric stack data: $(F, N_{\sigma}, N) \to (G, M_{\tau}, M)$ is a morphism of lattices $N \to M$ that takes every cone $\sigma \in F$ into a cone of $G$. We also require that if $\sigma$ maps into $\tau$, then $N_{\sigma}$ maps to $M_{\tau}$. \end{defn}

In fact a toric stack will naturally be a \textit{logarithmic stack}. Following Shentu \cite{shentu},  we will use the following definition for a logarithmic algebraic stack (see also Chapter 5 of \cite{olssonlog} for a discussion on various notions of algebraic stacks).

\begin{defn}\cite[Definition 4.2]{shentu}
Given an algebraic stack $\calX$  define a logarithmic structure on $\calX$, as a pair $(\calM, \alpha)$, where $\calM$ is a sheaf of monoids, and $\alpha: \calM \to \calO_{\calX}$ is a homomorphism of monoids, such that $\alpha|_{\alpha^{-1}\calO_{\calX}^*}$ is an isomorphism. We call the pair $(\calM, \alpha)$ a logarithmic algebraic stack. \end{defn}

\begin{remark} The stack $\calX(F, N_{\sigma}, N)$ is a logarithmic stack.  The charts for the logarithmic structure of the affine pieces $\calX_{\sigma}$ are given by the dual monoid $M_{\sigma} = \Hom(N_{\sigma}, N)$. \end{remark}

\begin{remark} The coarse moduli space of $\calX(F, N_{\sigma}, N)$ is the toric variety $X(F)$ associated to the fan $F$. \end{remark}
\subsection{Alternate defintions}

We will now give a more careful definition of a Geraschenko-Satriano toric stack, before stating the main result of \cite{gs2}, which gives conditions on when an Artin stack is toric.

Let $(F, \beta: L \to N)$ be a stacky fan, i.e. a pair of a fan $F$ in a lattice $L$ and a map $\beta$ to a lattice $N$ with finite cokernel. Then $\beta^*$ induces a surjective map of tori, $T_{\beta} : T_L \to T_N$, with kernel $G_{\beta}$. 

\begin{defn} (GS toric stack) Let $(F, \beta: L \to N)$ be a pair as above. Then a GS toric stack $\calX_{F, \beta}$ is defined to be the stack quotient $[X(F) / G_{\beta}]$. \end{defn} 

For the sake of brevity, when referring to \textit{toric stacks} we mean the toric stacks arising from the data in Definition 2.1. We will refer to \textit{GS toric stacks} to indicate the toric stacks of Geraschenko-Satriano.

As mentioned above, in \cite{gs2} the authors develop a criterion for showing that an Artin stack is a GS toric stack. The criteria has been somewhat relaxed due to work of Alper, Hall, and Rydh (see Remark 4.4.0 in \cite{gs2}). We recall the necessary criteria below:

\begin{theorem} [Theorem 6.1 \cite{gs2}] \label{gs} Let $\calX$ be an Artin stack of finite type over an algebraically closed field $k$ of characteristic 0. Suppose $\calX$ has an action of a torus $T$ and a dense open substack which is $T$-equivariantly isomorphic to $T$. Then $\calX$ is a  GS toric stack if and only if the following hold:
\begin{enumerate}
\item $\calX$ is normal
\item $\calX$ has affine diagonal and
\item geometric points of $\calX$ have linearly reductive stabilizers 
\end{enumerate}
\end{theorem}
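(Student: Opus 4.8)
The plan is to prove the two implications of the biconditional separately. The forward direction---that a GS toric stack $\calX_{F,\beta} = [X(F)/G_\beta]$ automatically satisfies conditions (1)--(3)---is essentially formal, so I would dispatch it first. The reverse direction carries the content: given an Artin stack $\calX$ of finite type over $k$ equipped with a $T$-action, a dense open substack equivariantly isomorphic to $T$, and properties (1)--(3), I must manufacture a stacky fan $(F,\beta\colon L \to N)$ together with an isomorphism $\calX \cong [X(F)/G_\beta]$. My overall strategy for the reverse direction is: descend $\calX$ to a good moduli space, recognize that space as a toric variety via a Sumihiro-type argument, then lift the resulting toric atlas back up to a global quotient presentation and read off $(F,\beta)$.

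For the forward direction, recall that $G_\beta$ is by construction the kernel of a surjection of tori $T_L \to T_N$, hence a diagonalizable, in particular affine and (in characteristic $0$) linearly reductive, group scheme. Normality follows because $X(F) \to [X(F)/G_\beta]$ is a smooth atlas, $X(F)$ is a normal toric variety, and normality is smooth-local. The diagonal is affine because $G_\beta$ is an affine group scheme acting on the separated scheme $X(F)$, which is exactly the condition guaranteeing that $\Delta_{[X(F)/G_\beta]}$ is affine. Finally every geometric stabilizer is a closed subgroup scheme of the diagonalizable group $G_\beta$, hence itself diagonalizable and a fortiori linearly reductive. This settles (1)--(3).

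For the reverse direction I would invoke the good moduli space theory of Alper together with the local structure theorem of Alper--Hall--Rydh. Conditions (2) and (3)---affine diagonal and linearly reductive stabilizers---ensure that near each closed point $x$ the stack admits an \'etale-local presentation $[\Spec A / G_x] \to \calX$, with $G_x$ the linearly reductive stabilizer, and that these assemble to a good moduli space $\pi\colon \calX \to X$. Since $\pi$ is initial among maps to algebraic spaces, the $T$-action and the dense open torus descend through $\pi$, and normality (1) descends as well; thus $X$ is a normal algebraic space carrying a $T$-action with a dense open equivariantly isomorphic to $T$. By Sumihiro's theorem $X$ is a toric variety, so $X = X(F_N)$ for a fan in $N$.

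The remaining task---and the main obstacle---is to lift the toric atlas of $X$ to a global quotient presentation of $\calX$ and to identify the presenting group as a single diagonalizable $G_\beta$. Over each affine chart $U_\sigma = \Spec k[S_\sigma] \subset X$, the preimage $\pi^{-1}(U_\sigma)$ is cohomologically affine with affine good moduli space, hence of the form $[\Spec R_\sigma / G_\sigma]$. Two points must be settled. First, each $G_\sigma$ must be shown diagonalizable: condition (3) only supplies linear reductivity, but the $T$-action is free on the dense torus, so it acts on the inertia of $\calX$ with trivial generic stabilizer, and rigidity of linearly reductive group schemes over the connected $T$-orbits forces every stabilizer, hence $G_\sigma$, to be diagonalizable. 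Second, the local presentations must glue: comparing the character lattices of the $G_\sigma$ with the combinatorics of the fan of $X$, I would identify all of them with a fixed $G_\beta = \ker(T_L \to T_N)$ and patch the schemes $\Spec R_\sigma$ into a single normal toric variety $\widetilde{X} = X(F)$ with fan $F$ in a lattice $L$, reading off $\beta\colon L \to N$ from the surjection $T_L \to T_N$ induced by $\widetilde{X} \to X$. Then $\calX \cong [\widetilde{X}/G_\beta] = \calX_{F,\beta}$, as required. The hard part is precisely this globalization: guaranteeing that the locally defined groups and lattices are mutually compatible enough to produce one stacky fan, and it is here that torus-equivariance and normality are used most heavily.
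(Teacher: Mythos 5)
This statement carries no proof in the paper at all: it is quoted verbatim from Geraschenko--Satriano (Theorem 6.1 of \cite{gs2}), with only the remark that the original extra hypothesis was removed by Alper--Hall--Rydh, so your attempt can only be measured against the argument in \cite{gs2} itself. Your forward direction is correct and essentially the standard one. The reverse direction, however, breaks at its load-bearing first move: conditions (2) and (3) do \emph{not} ``assemble to a good moduli space $\pi\colon \calX \to X$.'' The theorem concerns Artin stacks, and GS toric stacks with positive-dimensional stabilizers typically admit no good moduli space at all: $[\bP^1/\mathbb{G}_m]$ satisfies every hypothesis of the theorem (normal, affine diagonal, stabilizers trivial or $\mathbb{G}_m$, dense open substack $[\mathbb{G}_m/\mathbb{G}_m]$ equivariantly isomorphic to the trivial torus $T=\mathbb{G}_m/\mathbb{G}_m$) and is a GS toric stack, yet it is not cohomologically affine over $\Spec k$ (linearize $\calO(-2)$ and use $H^1(\bP^1,\calO(-2))\neq 0$), and no other candidate space exists. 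Existence of a good moduli space requires genuinely more than (2) and (3) -- in modern terms $\Theta$-reductivity and S-completeness -- and toric Artin stacks routinely fail it.

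Even where some quotient space does exist, the next two steps of your plan also fail. For $[\bA^2/\mathbb{G}_m]$ with weights $(1,1)$ the good moduli space is $\Spec k$, so the dense torus does not descend and Sumihiro has nothing to act on; and in the Deligne--Mumford regime the Keel--Mori coarse space can be non-separated: $[(\bA^2\setminus\{0\})/\mathbb{G}_m]$ with weights $(1,-1)$ is the affine line with doubled origin, which is normal with affine diagonal but is not $X(F)$ for any fan, so the conclusion ``$X = X(F_N)$'' is simply false there. This is precisely why the proof in \cite{gs2} never descends to a global quotient space: it works on the stack itself, using the local structure results to produce torus-equivariant charts that are already toric stacks -- a stacky analogue of Sumihiro's theorem, including a passage from \'etale-local to Zariski-local toric charts -- and then glues the resulting stacky fans combinatorially. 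The globalization you defer as ``the hard part'' is not a residue of your argument; it is the entire content of the theorem, and the scaffolding you erect before it (global good moduli space, descended dense torus, classical Sumihiro) collapses on the basic Artin and non-separated examples the statement is designed to cover.
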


Using this theorem, Geraschenko-Satriano show the following remark:

\begin{remark} \label{2stacks} \cite[Remark 6.2]{gs2} The toric stacks defined by Tyomkin arising from the data in Definition 2.1 are GS toric stacks. \end{remark} 

Finally, we discuss a property of maps between toric stacks that will be used to conclude our main result in the final section of this paper.

\begin{lemma} \label{keylemma} Let $\calX_F = (F,N_{\sigma},N)$ and $\calX_G = (G, M_{\tau}, M)$ be two toric stacks and suppose $M = N$. Furthermore, suppose there are morphisms in both directions between $\calX_F$ and $\calX_G$. Then $\calX_F \cong \calX_G$. \end{lemma}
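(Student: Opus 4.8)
The plan is to reduce the statement to a purely combinatorial comparison of toric stack data and then to show that the two comparison morphisms are inverse to one another at the level of lattices. By Definition 2.3, a morphism $\calX_F \to \calX_G$ is the datum of a lattice homomorphism $\phi : N \to N$ (using $M = N$) carrying each cone $\sigma \in F$ into a cone of $G$ and satisfying $\phi(N_\sigma) \subseteq M_\tau$ whenever $\sigma$ maps into $\tau$; write $\psi : N \to N$ for the lattice map underlying the morphism in the opposite direction. Both stacks share the dense open torus $T_N$ (this is exactly where $M = N$ enters, since the torus of $\calX_F$ is $T(N_\sigma^{\mathrm{gp}})/K_\sigma \cong T_N$ and likewise for $\calX_G$), and the two morphisms restrict there to the torus homomorphisms $T(\phi)$ and $T(\psi)$. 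Thus the composites $\psi\phi$ and $\phi\psi$ are endomorphisms of the toric stack data $(F,N_\sigma,N)$ and $(G,M_\tau,N)$ respectively.

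The heart of the argument is to show that $\phi$ and $\psi$ are mutually inverse automorphisms of $N$. First I would establish injectivity: since all maximal cones are full-dimensional (Remark 2.2), each $N_\sigma^{\mathrm{gp}}$ for $\sigma$ maximal has finite index in $N$ and hence full rank, so a rank drop in $\phi$ would force the full-rank lattice $\phi(N_\sigma^{\mathrm{gp}})$ into the group $M_\tau^{\mathrm{gp}}$ of a lower-dimensional cone; iterating the self-map $\psi\phi$ and using that $F$ has only finitely many cones then yields a contradiction with the finite-index hypothesis. Once $\phi$ and $\psi$ are injective they carry maximal cones to maximal cones, and a covolume computation along the induced maps of maximal cones — run in both directions so that the indices $[M_\tau^{\mathrm{gp}} : \phi(N_\sigma^{\mathrm{gp}})]$ and $[N_\sigma^{\mathrm{gp}} : \psi(M_\tau^{\mathrm{gp}})]$ cancel around each cycle of the induced permutation of maximal cones — forces $|\det \phi| = |\det \psi| = 1$. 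Hence $\phi, \psi \in \mathrm{GL}(N)$, and since $T(\psi)T(\phi)$ and $T(\phi)T(\psi)$ are the torus parts of self-morphisms of the respective toric stacks, they are the identity, giving $\psi = \phi^{-1}$.

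With $\psi = \phi^{-1}$ in hand the combinatorics collapse. The condition that $\phi$ sends every cone of $F$ into a cone of $G$ while $\phi^{-1}$ sends every cone of $G$ into a cone of $F$ forces $\phi$ to induce a bijection between the maximal cones of $F$ and those of $G$, and hence an isomorphism of fans $F \to G$; the two nested containments $\phi(N_\sigma) \subseteq M_{\rho(\sigma)}$ and $\phi^{-1}(M_{\rho(\sigma)}) \subseteq N_\sigma$ then upgrade to the equality $\phi(N_\sigma) = M_{\rho(\sigma)}$ for every cone. Thus $\phi$ is an isomorphism of toric stack data, and applying the realization of Definition 2.1 functorially — via the gluing of the charts $\calX_\sigma = [X_\sigma/K_\sigma]$ — produces the desired isomorphism $\calX_F \cong \calX_G$.

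The main obstacle is the second paragraph: mere existence of morphisms in both directions does not imply isomorphism in an arbitrary category, so the genuine content is the rigidity forcing the underlying lattice maps to be automorphisms. I expect ruling out a rank drop and pinning the determinant to a unit to require careful use of exactly the two hypotheses built into Definition 2.1 — full-dimensionality of the maximal cones and finite index of each $N_\sigma^{\mathrm{gp}}$ — and this is where essentially all the work lies; once the lattice maps are known to be inverse automorphisms, the passage to equality of fans and of submonoids is formal.
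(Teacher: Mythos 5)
Your second paragraph---the claimed rigidity that morphisms in both directions force the underlying lattice maps $\phi,\psi$ to be mutually inverse automorphisms of $N$---is false, and this is a genuine gap, not a fixable technicality. Take $N = M = \bZ$, let $F = G$ be the fan of $\bA^1$ with maximal cone $\sigma = \bR_{\geq 0}$, and set $N_\sigma = \bN$ but $M_\sigma = 2\bN$. Then multiplication by $2$ is a morphism of toric stack data $(F,N_\sigma,N) \to (G,M_\sigma,N)$, since it carries $\sigma$ into $\sigma$ and $\bN$ into $2\bN = M_\sigma$, while $\psi = \mathrm{id}$ is a morphism $(G,M_\sigma,N)\to(F,N_\sigma,N)$, since $2\bN \subseteq \bN$. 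The composites $\psi\phi$ and $\phi\psi$ are multiplication by $2$, which is a perfectly legitimate self-morphism of each datum (on the stack level it induces $x \mapsto x^2$ on $\bA^1$); nothing in Definition 2.3 forces the torus part of a self-morphism to be the identity, so your covolume cancellation, the conclusion $\lvert\det\phi\rvert = 1$, and the step ``the composites are the identity, hence $\psi = \phi^{-1}$'' all break down. Worse, in this example $\calX_F = \bA^1$ and $\calX_G = [\bA^1/\mu_2]$ are not isomorphic, so the lemma read literally with arbitrary lattice maps---which is exactly how you read it---is false, and no argument can close the gap as stated.

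The paper's own proof tacitly reads the hypothesis differently: the morphisms in both directions are assumed to have underlying lattice map the identity of $N = M$, equivalently to restrict to the identity on the common dense torus. That is the situation in which the lemma is applied in Section 5, where the two maps between $\kg$ and $\Chow$ fix the shared torus (the paper notes there that the lattices ``are completely determined by the tori underlying the stacks''). Under that reading the proof is the short containment argument: each cone of $F$ sits inside a cone of $G$ and vice versa, which by the fan axioms (applied first to maximal cones) forces $F = G$; then $N_\sigma \subseteq M_\sigma \subseteq N_\sigma$ gives equality of the monoids. Your final paragraph is essentially this argument, but it rests entirely on the irreparable paragraph before it. The correct fix is not more lattice combinatorics; it is to build the identity-on-the-torus (or identity-on-$N$) hypothesis into the statement, as the intended application permits.
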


\begin{proof} In one direction, a cone $\sigma \in F$ must be a subcone of a cone $\tau \in G$. The morphism in the other direction tells us that $\tau$ is a subcone of $\sigma$. So $F = G$. Additionally, for each cone $\sigma \in F$, the submonoid $N_{\sigma}$ must be a submonoid of $M_{\sigma}$ and vice versa, so $N_{\sigma} = M_{\sigma}$. As the lattices, fans, and submonoids are all the same, the two toric stacks must be equivalent. \end{proof}

\section{Chow Quotients of Toric Varieties} \label{3}

\subsection{Construction of the quotient as a variety}

We will begin by recalling the definition and construction of the Chow quotient of a toric variety by a subtorus, following \cite{ksz}. 

Let $V$ be a projective toric variety with big torus $T$ embedded as a Zariski open subset such that the action of $T$ on itself extends to an action on $V$. Let $N = \Hom(\bC^*, T) \cong \bZ^r$ be the lattice of one-parameter subgroups. Then $V$ is defined by a fan $F \subset N_{\bR} = N \otimes  \bR$, which is a collection of closed rational strictly convex polyhedral cones. 

In \cite{ksz} it is proven that the Chow quotient is itself a toric variety whose fan structure can be defined explicitly. Consider a $k$-parameter subgroup $H$ of the torus $T$ of $V$. Then $H$ corresponds to a sublattice $L \subset N$. We then obtain a natural projection map $p: N \to Q = N /L$ and we can define $G$ to be the projection of our fan $F$ along $p_\bR : N_\bR \to Q_\bR$. This is defined by projecting each cone in $F$ along $p_\bR$ and taking their minimal common refinement. In \cite{ksz} it is shown that the Chow quotient $V \sslash_C H$ is $V(G)$, the toric variety associated to the fan $G$.

More explicitly, we can define $G$ as follows. For each vector $\psi \in N_{\bR}$, define:
$$N(\psi) := \{\sigma \in F | \sigma^{\circ} \cap (L + \psi) \neq \emptyset \}.$$ Vectors $\psi, \psi'$ will be called equivalent if $N(\psi) = N(\psi')$. The closure of each non-empty equivalence class of vectors defines a rational polyhedral convex cone in $N_{\bR}$ invariant under $L$. The collection of images of these cones under the projection map form the fan $G$. 

The Chow quotient is a subspace of the Chow variety, the space of algebraic cycles of a given dimension and homology class. Again, let $H$ be a $k$-parameter subgroup of the larger torus $T$, then we must show that every point of $V \sslash_C H$ corresponds to an algebraic cycle on $V$. For each point $x \in T$ we can define the closure $C_x$ to be $\overline{H\cdot x}$, the orbit of $x$ under the action of $H$. Since the $C_x$ are translates of each other by the action of $T$, each $C_x$ has the same dimension and homology class-- this determines a morphism from $T /H$ to the Chow variety.  One then defines the Chow quotient $V \sslash_C H$ to be the closure of $T/H$ inside the Chow variety. In fact, this construction, which we will sketch below, allows us to determine the toric stack structure $\Chow$. 

\subsection{Construction of the quotient as a toric stack}

Let $\kappa \in G$ be a cone in the quotient fan, and let $e_{\kappa} \in V(G)$ be the associated distinguished point. For $\psi \in \kappa$, Denote by $\calN_0(\kappa)$ the set 
\[ \calN_0(\kappa) = \{ \sigma \in F \mid \sigma^{\circ} \cap (\psi + L)  \textrm{ is  one point } \}. \addtag \]
Now to each distinguished point $e_{\kappa} \in V(G)$, associate a cycle on $V$ of the form: $$E_{\kappa} = \displaystyle\sum_{\sigma \in \calN_0(\kappa)} c(\sigma, L) \cdot \overline{He_{\sigma}}.$$
where we define the multiplicities $c(\sigma,L) \in \bN$ as follows:\\

For each $\sigma \in F$, we denote by $\textrm{Lin($\sigma$)}$, the subspace of $N$ spanned by $\sigma$. Then we define $c(\sigma, L)$ as the index of lattices:
$$ c(\sigma, L) := \bigg[(L + \textrm{Lin($\sigma$)}) \cap N : (L \cap N + \textrm{Lin($\sigma$)} \cap N)\bigg].$$

The cycle over general points is obtained by translating by the action of the torus, thus each point of $V \sslash_C H$ corresponds to an algebraic cycle.\\ 

Let $\kappa \in G$ be a cone in the fan for the Chow quotient described above. Define $$Q_{\kappa} = \bigcap_{\sigma \in \calN_0(\kappa)} p(\sigma \cap N) $$ to be the intersection of the projection of the lattices determined by the cones $\sigma$ such that $e_{\sigma}$ is in the fiber of $e_{\kappa}$. Clearly $Q_{\kappa}$ is a submonoid of $Q$ as $p(\sigma \cap N) \subset p(N) = Q$. \\

In \cite{sam}, the authors prove the following lemma, concluding that the datum above determines a toric stack.  

\begin{lemma} [\cite{sam} Lemma 3] \label{chowtoric} The triple $(G, Q_{\kappa}, Q)$ determines a toric stack, ${\Chow}$. \end{lemma}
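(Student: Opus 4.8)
The plan is to verify directly that the triple $(G, Q_\kappa, Q)$ satisfies the three requirements of a toric stack datum in Definition 2.1: that $G$ is a fan in the lattice $Q$ with each $Q_\kappa$ a finitely generated saturated submonoid of $\kappa \cap Q$; that the face relation $\kappa' < \kappa$ induces $Q_{\kappa'} = \kappa' \cap Q_\kappa$; and that $Q^{\mathrm{gp}}_\kappa \subseteq Q$ has finite index whenever $\kappa$ is maximal. That $G$ is a fan in $Q$ is exactly the content of the KSZ projection-fan construction recalled above, so the substance of the lemma lies entirely in the monoids $Q_\kappa$.

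The crucial observation, which I would isolate first, is that membership $\sigma \in \calN_0(\kappa)$ is equivalent to $p$ restricting to an injection on $\sigma$, i.e.\ $L_\bR \cap \mathrm{Lin}(\sigma) = 0$: the requirement that $\sigma^{\circ} \cap (\psi + L)$ be a single point for $\psi$ in the relative interior of $\kappa$ says precisely that the fibers of $p|_\sigma$ are points. Granting this, I would show each $p(\sigma \cap N)$ is saturated in its own group $p(\mathrm{Lin}(\sigma) \cap N)$ by a short lattice argument: if $q = p(n')$ with $n' \in \mathrm{Lin}(\sigma)\cap N$ and $mq = p(n)$ with $n \in \sigma\cap N$, then $mn' - n \in L \cap \mathrm{Lin}(\sigma) \subseteq L_\bR \cap \mathrm{Lin}(\sigma) = 0$, so $n' = n/m \in \sigma \cap N$ and $q \in p(\sigma\cap N)$. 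Since $Q_\kappa$ is an intersection of submonoids it is itself a submonoid, and since $Q^{\mathrm{gp}}_\kappa$ is contained in each $p(\sigma\cap N)^{\mathrm{gp}}$, saturatedness of the pieces forces saturatedness of $Q_\kappa$ in $Q^{\mathrm{gp}}_\kappa$; finite generation is then Gordan's lemma inside the strictly convex cone $\kappa$. The containment $Q_\kappa \subseteq \kappa \cap Q$ I would deduce from the fact that, $G$ being the common refinement of the projected cones $p(\sigma)$, the cones $p(\sigma)$ with $\sigma \in \calN_0(\kappa)$ supply the walls of $\kappa$, so their intersection is $\kappa$ itself.

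The main obstacle is the face-compatibility condition $Q_{\kappa'} = \kappa' \cap Q_\kappa$ for $\kappa' < \kappa$, because the index set $\calN_0$ does not vary monotonically as one passes to a face: as $\psi$ degenerates from the interior of $\kappa$ to the interior of $\kappa'$, a cone $\sigma \in \calN_0(\kappa)$ may leave $\calN_0(\kappa')$ (its single intersection point slides onto $\partial\sigma$, so $\sigma^{\circ}$ is no longer met), while new cones and faces enter. I would handle the inclusion $Q_{\kappa'} \subseteq \kappa' \cap Q_\kappa$ and its reverse separately, the delicate direction being $\kappa' \cap Q_\kappa \subseteq Q_{\kappa'}$: for a point $q \in \kappa' \cap Q_\kappa$ and a cone $\sigma \in \calN_0(\kappa')\setminus\calN_0(\kappa)$ one must show that membership of $q$ in the smaller cone $\kappa'$ already forces $q \in p(\sigma\cap N)$. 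The key will be to compare such $\sigma$ with the cones of $\calN_0(\kappa)$ whose relative interiors degenerate onto $\sigma$, using the face relations in $F$ together with the saturatedness established above to transfer the defining intersection from the larger index set to the smaller one.

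Finally, the finite-index condition for maximal $\kappa$ would follow once the preceding containment is made quantitative: since $\kappa$ is full-dimensional (all maximal cones are, by assumption) and each $p(\sigma\cap N)$ with $\sigma \in \calN_0(\kappa)$ contains a finite-index submonoid of $p(\sigma)\cap Q \supseteq \kappa \cap Q$ — with index controlled by the multiplicity $c(\sigma,L)$ — the finite intersection $Q_\kappa$ still spans $Q_\bR$, so $Q^{\mathrm{gp}}_\kappa$ has finite index in $Q$. I expect the face-compatibility step to be where essentially all the combinatorial work lives; the saturation, containment, and finite-index claims are comparatively mechanical once the injectivity reformulation of $\calN_0(\kappa)$ is in hand.
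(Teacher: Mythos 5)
Your proposal inverts the structure of the paper's own proof. The paper's proof begins by declaring that the only thing needing verification is the face-compatibility condition $Q_\lambda = \lambda \cap Q_\kappa$ for a face $\lambda < \kappa$, and then proves exactly that; the other conditions (saturation, finite generation, finite index) are treated as immediate. You do the opposite: you give a correct and complete argument for the parts the paper skips --- your injectivity reformulation of $\sigma \in \calN_0(\kappa)$, the lattice argument showing each $p(\sigma \cap N)$ is saturated, and the passage to the intersection $Q_\kappa$ are all sound --- but the face-compatibility condition, which you yourself identify as ``where essentially all the combinatorial work lives,'' is left as a statement of intent (``the key will be to compare such $\sigma$ with the cones of $\calN_0(\kappa)$ whose relative interiors degenerate onto $\sigma$'') rather than an argument. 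Since that condition is the actual content of the lemma, this is a genuine gap, not a stylistic difference.

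Concretely, what is missing is a two-way correspondence between the index sets $\calN_0(\kappa)$ and $\calN_0(\lambda)$, together with a lattice-level identity. The paper supplies the identity: for $\sigma \in \calN_0(\kappa)$ (so that $p|_\sigma$ is injective, as you observe) and $\tau < \sigma$ the face mapping isomorphically onto $\lambda$, one has $p(\tau \cap N) = p(\sigma \cap N) \cap \lambda$, where the nontrivial inclusion uses precisely that a point of $p(\sigma \cap N) \cap \lambda$ has a unique preimage in $\sigma$, which must already lie in $\tau$. To conclude one then needs: (a) every $\sigma \in \calN_0(\kappa)$ has a face in $\calN_0(\lambda)$, namely the face whose relative interior contains $(p|_\sigma)^{-1}(\psi')$ for $\psi' \in \lambda^{\circ}$, which gives $Q_\lambda \subseteq \lambda \cap Q_\kappa$; and (b) every $\tau \in \calN_0(\lambda)$ is a face of some $\sigma \in \calN_0(\kappa)$, a limit-of-vertices argument in the fibers of $p$, which gives the delicate inclusion $\lambda \cap Q_\kappa \subseteq Q_\lambda$ that you flag but do not establish. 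A smaller instance of the same omission: your claim that the cones $p(\sigma)$, $\sigma \in \calN_0(\kappa)$, ``supply the walls of $\kappa$,'' which you need for $Q_\kappa \subseteq \kappa \cap Q$ and for the Gordan's lemma step, is likewise asserted without proof; it again rests on knowing that every cone of $N(\psi)$ has a face lying in $\calN_0(\kappa)$, i.e.\ on the same vertex/degeneration argument you defer.
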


\begin{proof} The only thing that needs to be checked is the compatibility condition, i.e. if $\lambda \subset \kappa$ is a face, then $Q_{\kappa}  = \lambda \cap Q_{\kappa}$. Suppose $\sigma \subset F$ is a cone mapping isomorphically to $\kappa$,  then since $p$ is an isomorphism on $\sigma$, it follows that there is a unique face $\tau \subset \sigma$ mapping isomorphically to $\lambda$. Since $\tau \subset \sigma$ and $\tau$ maps isomorphically to $\lambda$, we have that $p(\tau \cap N) \subset p(\sigma \cap N) \cap \lambda$. Furthermore, since $p^{-1}(p(\sigma \cap N) \cap \lambda) \cap \sigma \subset \tau \cap N$, we can conclude that $p(\sigma \cap N) \cap \lambda \subset p(\tau \cap N)$. The result then follows from taking intersections over $\calN_0(\kappa)$ and $\calN_0(\lambda)$. 
\end{proof}

In fact, $\Chow$ is a Deligne-Mumford stack whose underlying coarse moduli space is the Chow quotient $V \sslash_C H$. 

In the following, we discuss some properties of the universal family of the Chow quotient stack, as well as its universal property.  What follows for the remainder of section \ref{3} is somewhat technical and so the proofs can be skipped on a first reading, however we do note that Theoem \ref{univ} (the \emph{universal property}) is the key tool in the proof of our main theorem in section \ref{5}. Finally, we note that subsections 3.5, 3.6, and 3.7 are not necessary for the proof of our main theorem, but contain more properties of the Chow quotient stack that are interesting in their own right.

\subsection{Universal family} 
The Chow stack is naturally a moduli space, as it parametrizes broken orbits of $H$ inside $V$,  and as such, it should carry a universal family. The universal family on the stack $\Chow$ was also constructed in \cite{sam}, in particular,  $\calU \to [V \sslash_C H]$ is the minimal modification of the fan $F$ of $V$ into a toric stack $(F', L_{\kappa'}, L)$ that maps to both $V$ and $[V \sslash_C H]$:

\[
\begin{CD}
\calU @>>>V\\
@VVV  \\
\Chow
\end{CD}
\]\\

Here $L_{\kappa'}$ is defined to be $L_{\kappa'} = L_{\kappa} \cap p^{-1}(Q_\tau)$, where $\kappa'$ is a cone in $F'$.\\

This statement will follow as a corollary to the following lemma:

\begin{lemma} [\cite{sam} Lemma 4] Fix a diagram of morphisms of lattices:

\begin{center}
\begin{tikzcd}
N \ar{r}{id}\ar{d}{p} & N\\
Q
\end{tikzcd}
\end{center}

as well as two fans: $F \subset N$ and $G \subset Q$. Let $\calD$ denote the category of fans $F' \subset N$ that map to both $F$ and $G$ under the given map of lattices. The morphisms of $\calD$ are given by maps of fans $F^{''} \to F'$ that commute with the maps to both $F$ and $G$. Then the category $\calD$ has a terminal object. 
\end{lemma}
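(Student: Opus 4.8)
The plan is to recognize $\calD$ as a \emph{thin} category (a preorder) and to exhibit its terminal object as an explicit common refinement. First I would unwind the two conditions defining membership in $\calD$. The requirement that $F'$ map to $F$ under the identity says that every cone of $F'$ is contained in a cone of $F$, i.e.\ that $F'$ refines $F$. The requirement that $F'$ map to $G$ under $p$ says that $p(\rho)$ lies in a cone of $G$ for every $\rho \in F'$, equivalently $\rho \subseteq p^{-1}(\tau)$ for some $\tau \in G$. Writing $p^{-1}(G)$ for the collection $\{p^{-1}(\tau) : \tau \in G\}$, which is a fan in $N_{\bR}$ whose cones all contain the lineality space $\ker p = L_{\bR}$ and whose face and intersection relations are pulled back from those of $G$ (using surjectivity of $p$), the second condition says precisely that $F'$ refines $p^{-1}(G)$. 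Thus $\calD$ is exactly the collection of fans refining both $F$ and $p^{-1}(G)$. Moreover, since a morphism in $\calD$ must commute with the maps to $F$ via $\mathrm{id}_N$, its underlying lattice map is forced to be $\mathrm{id}_N$; hence there is at most one morphism between any two objects, and uniqueness is automatic, leaving only existence to address.

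Next I would produce the candidate terminal object as the common refinement
\[
W \;=\; \{\, \sigma \cap p^{-1}(\tau) \;:\; \sigma \in F,\ \tau \in G \,\}.
\]
Each $\sigma \cap p^{-1}(\tau)$ is a rational polyhedral cone, and the standard fact that the intersection of two fans is again a fan shows $W$ is a fan. I would then check $W \in \calD$: it refines $F$ because $\sigma \cap p^{-1}(\tau) \subseteq \sigma$, and $p(\sigma \cap p^{-1}(\tau)) \subseteq \tau \in G$, so $p$ carries the cones of $W$ into $G$.

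Finally I would verify the universal property. Given any $F' \in \calD$ and any cone $\rho \in F'$, the fact that $F'$ refines $F$ gives $\rho \subseteq \sigma$ for some $\sigma \in F$, and the fact that $F'$ refines $p^{-1}(G)$ gives $\rho \subseteq p^{-1}(\tau)$ for some $\tau \in G$; hence $\rho \subseteq \sigma \cap p^{-1}(\tau) \in W$. Thus $\mathrm{id}_N$ carries every cone of $F'$ into a cone of $W$, producing the (necessarily unique) morphism $F' \to W$, which commutes with the projections to $F$ and $G$ by construction. Since every object of $\calD$ admits such a morphism to $W$, the fan $W$ is terminal.

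I expect the only point requiring genuine care to be the verification that $W$—equivalently the pullback fan $p^{-1}(G)$ and its refinement against $F$—is actually a fan: one must confirm that the collection of intersection cones is closed under passing to faces and that any two of them meet in a common face, which is exactly where surjectivity of $p$ and the lineality space $L_{\bR}$ enter. Everything else reduces to the observation that $\calD$ is a preorder whose maximum is the coarsest common refinement, so that terminality of $W$ is equivalent to $W$ being simultaneously a common refinement of $F$ and $p^{-1}(G)$ and refined by every object of $\calD$.
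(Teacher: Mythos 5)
Your proposal is correct, and it constructs exactly the same terminal object as the paper: the collection of cones $\sigma \cap p^{-1}(\kappa)$ with $\sigma \in F$, $\kappa \in G$. The difference is in where the work is placed. The paper's proof spends all of its effort verifying that this collection is a fan: it checks that the intersection of two such cones is again in the collection, via the identity $(p^{-1}(\kappa_1)\cap\sigma_1)\cap(p^{-1}(\kappa_2)\cap\sigma_2) = p^{-1}(\kappa_1\cap\kappa_2)\cap(\sigma_1\cap\sigma_2)$, and that this intersection is a face of each, using the extremality characterization of faces (if $x+y$ lies in the putative face, with $x,y$ in the ambient cone, then $x$ and $y$ lie in it too); terminality itself is left implicit. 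You do the opposite: you reduce the fan verification to the standard fact that the common refinement of $F$ with the generalized (non--strictly-convex) fan $p^{-1}(G)$ is again a fan --- correctly flagging that this, together with the role of the lineality space $L_{\bR}$ and surjectivity of $p$, is the one point needing genuine care --- and you spend your effort on exactly what the paper omits: the observation that $\calD$ is a preorder (the lattice map underlying any morphism in $\calD$ is forced to be $\mathrm{id}_N$, so morphisms are unique when they exist), and the explicit check that every $F' \in \calD$ refines $W$, which is precisely terminality. Both arguments are complete modulo the same polyhedral fact; your version makes the categorical content of the statement visible, while the paper's makes the polyhedral content visible, and a referee would reasonably want the two combined.
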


\begin{proof}
The terminal object is the collection of cones $p^{-1}(\kappa) \cap \sigma$, where $\kappa$ ranges through all cones in $G$ and $\sigma$ ranges through all cones in $F$. To show that these form a fan, it suffices to show that the intersection of two cones is in the collection and is a face of each.

To show that the intersection of two cones is in the collection, note that $$(p^{-1}(\kappa_1) \cap \sigma_1) \cap (p^{-1}(\kappa_2) \cap \sigma_2) = p^{-1}(\kappa_1 \cap \kappa_2) \cap (\sigma_1 \cap \sigma_2).$$ 

If $x + y \in p^{-1}(\kappa_1) \cap \sigma_1$ is inside the intersection $p^{-1}(\kappa_1 \cap \kappa_2) \cap (\sigma_1 \cap \sigma_2)$,  then by applying $p$ gives that $p(x+y) = p(x) + p(y)$ is in $\kappa_1 \cap \kappa_2$. Therefore we see that $p(x) \in \kappa_1 \cap \kappa_2$ and $p(y) \in \kappa_1 \cap \kappa_2$. Furthermore, $x + y \in \sigma_1 \cap \sigma_2$ and thus $x \in \sigma_1 \cap \sigma_2$ and $y \in \sigma_1 \cap \sigma_2$. Finally, we see that $x, y \in p^{-1}(\kappa_1 \cap \kappa_2) \cap (\sigma_1 \cap \sigma_2)$ and so the second statement also follows. 

\end{proof}

The above Lemma gave rise to a fan $F'$. As $\Chow$ is a toric stack, to obtain $\calU$ we must additionally exhibit its monoid structure: that is, given a cone $\sigma' \in F'$, we must determine $N_{\sigma'}$. This is the minimal choice mapping to $N_{\sigma}$ and $Q_{\tau}$: define $N_{\sigma'} = N_{\sigma} \cap p^{-1}(Q_{\tau})$. The proof of compatibility, given in \cite{sam}, is similar to the proof of Lemma 3.2, and so we omit it here. Thus, we have shown the following:

\begin{lemma} [\cite{sam} Lemma 5] The collection $(F', N_{\sigma'}, N)$ is a toric stack $\calU$. It is the minimal toric stack that maps to both $V$ and $[V \sslash_C H]$. \end{lemma}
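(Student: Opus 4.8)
The plan is to establish the two assertions in turn: first that the triple $(F', N_{\sigma'}, N)$ meets the axioms of a toric stack datum (Definition 2.1), so that it defines a stack $\calU$, and then that $\calU$ is terminal among toric stacks on $N$ carrying compatible morphisms to $V$ and to $[V \sslash_C H]$. I take $F'$ to be precisely the terminal object of $\calD$ produced in \cite{sam} Lemma 4, whose cones are the $\sigma' = \sigma \cap p^{-1}(\tau)$ for $\sigma \in F$ and $\tau \in G$. For each such $\sigma'$ I let $\sigma$ and $\tau$ be the unique cones of $F$ and $G$ whose relative interiors contain $\mathrm{relint}(\sigma')$ and $p(\mathrm{relint}(\sigma'))$ respectively; this canonical choice is what makes $N_{\sigma'} = N_\sigma \cap p^{-1}(Q_\tau)$ well defined, and it is the first point that needs care.

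To verify the datum, I would note that $N_\sigma = \sigma \cap N$ is saturated in $N$ and $Q_\tau$ is saturated in $Q$ (the latter because $(G, Q_\tau, Q)$ is a toric stack datum by Lemma \ref{chowtoric}); hence $p^{-1}(Q_\tau)$ is saturated in $N$, and so is the intersection $N_{\sigma'}$. As a saturated submonoid sitting inside the rational polyhedral cone $\sigma'$ it is finitely generated by Gordan's lemma, and visibly $N_{\sigma'} \subset \sigma' \cap N$. The face-compatibility condition $N_{\tau'} = \tau' \cap N_{\sigma'}$ for $\tau' < \sigma'$ is verified exactly as in Lemma \ref{chowtoric}, which is the step worked out in \cite{sam} and omitted in the text above. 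Finally, for a maximal, hence full-dimensional, cone $\sigma'$, the corresponding $\sigma$ and $\tau$ are both full-dimensional, so $N_\sigma^{\mathrm{gp}} = N$ while $Q_\tau^{\mathrm{gp}}$ has finite index in $Q$; consequently $N_{\sigma'} = \sigma' \cap p^{-1}(Q_\tau^{\mathrm{gp}})$ has full rank and $N_{\sigma'}^{\mathrm{gp}}$ is of finite index in $N$.

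I would then record the two structure maps. The identity $N \to N$ carries each $\sigma' = \sigma \cap p^{-1}(\tau)$ into $\sigma$ and $N_{\sigma'}$ into $N_\sigma$, and so is a morphism of toric stack data (Definition 2.3), giving $\calU \to V$; the projection $p$ carries $\sigma'$ into $\tau \in G$ and $N_{\sigma'}$ into $Q_\tau$, giving $\calU \to [V \sslash_C H]$. By construction these two maps commute with the projection $p$ relating $V$ and $[V \sslash_C H]$.

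For minimality, I would take an arbitrary toric stack $\mathcal{Y} = (F'', N''_\rho, N)$ on $N$ with morphisms to $V$ and to $[V \sslash_C H]$ commuting with $p$, and produce a unique morphism $\mathcal{Y} \to \calU$ over both. On fans, \cite{sam} Lemma 4 already furnishes the unique map $F'' \to F'$: each $\rho \in F''$ lands in some $\sigma \in F$ and has $p(\rho) \subset \tau$ for some $\tau \in G$, so $\rho \subset \sigma \cap p^{-1}(\tau) = \sigma'$. On monoids, the two given morphisms force $N''_\rho \subset N_\sigma$ and $p(N''_\rho) \subset Q_\tau$, whence $N''_\rho \subset N_\sigma \cap p^{-1}(Q_\tau) = N_{\sigma'}$; this is exactly the inclusion required by Definition 2.3 for the identity of $N$ to define $\mathcal{Y} \to \calU$, and it is the only candidate since it is induced by the identity. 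Hence $\calU$ is terminal, which is the asserted minimality. The step I expect to be most delicate is not any single inclusion but the well-definedness bookkeeping beneath them: a cone of $F'$ admits several presentations $\sigma \cap p^{-1}(\tau)$, so one must check that both $N_{\sigma'}$ and the minimality inclusion $N''_\rho \subset N_{\sigma'}$ are independent of the presentation, which is what the canonical choice of smallest representing cones, together with the face-compatibility of Lemma \ref{chowtoric}, secures.
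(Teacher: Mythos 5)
Your proposal is correct and takes essentially the same route as the paper: the fan $F'$ is the terminal object supplied by the preceding lemma, the monoids are defined by $N_{\sigma'} = N_{\sigma} \cap p^{-1}(Q_{\tau})$, and the face-compatibility check is deferred to the argument of Lemma \ref{chowtoric} exactly as the paper itself does. The additional material you supply --- saturation, finite generation via the description $N_{\sigma'} = \sigma' \cap p^{-1}(Q_{\tau}^{\mathrm{gp}})$, the finite-index condition for maximal cones, the well-definedness of the presentation, and the explicit terminality argument --- fills in details the paper omits rather than changing the method.
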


We note that the universal family satisfies the following properties:

\begin{prop} The morphism $\calU \to [V \sslash_C H]$ has reduced fibers and is an integral morphism of logarithmic stacks; in particular, it is flat. \end{prop}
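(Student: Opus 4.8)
The plan is to reduce the three assertions to a single combinatorial property of the monoid homomorphisms attached to the toric morphism, and then to use the standard dictionary between integral homomorphisms of monoids and flatness. Since $\calU = \calX(F', N_{\sigma'}, N)$ and $\Chow = \calX(G, Q_\kappa, Q)$ are toric stacks, $\calU \to \Chow$ is the toric morphism induced by the projection $p : N \to Q$. Flatness, reducedness of fibers, and integrality are all local on charts and unaffected by the finite-group quotients defining the stack structure (these are étale in characteristic $0$), so it suffices to treat the affine toric morphisms $X_{\sigma'} \to X_\kappa$ on the smooth covers of the charts $\calX_{\sigma'} \to \calX_\kappa$. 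Such a map is $\Spec \bC[\theta] : \Spec \bC[P_{\sigma'}] \to \Spec \bC[P_\kappa]$, where $P_\kappa = \Hom(Q_\kappa, \bN)$ and $P_{\sigma'} = \Hom(N_{\sigma'}, \bN)$ are the chart monoids of the logarithmic structures and $\theta : P_\kappa \to P_{\sigma'}$ is dual to the restriction $\bar p : N_{\sigma'} \to Q_\kappa$ of $p$.

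The combinatorial input is the explicit description of $F'$ as the fan of cones $\sigma' = \sigma \cap p^{-1}(\kappa)$ with $\sigma \in F$ and $\kappa \in G$. I would first check that every cone of $F'$ maps \emph{onto} a cone of $G$: one has $p\big(\sigma \cap p^{-1}(\kappa)\big) = p(\sigma) \cap \kappa$, and since $G$ is the common refinement of the projected cones $\{p(\sigma)\}$, the intersection $p(\sigma) \cap \kappa$ is again a cone of $G$. Thus $\bar p$ carries each $\sigma'$ onto its image cone equidimensionally. This equidimensionality is precisely the combinatorial shadow of integrality: $\theta$ is an integral homomorphism of fine monoids exactly when the associated toric morphism maps cones onto cones in this fashion. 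Hence $\calU \to \Chow$ is an integral morphism of logarithmic stacks, and flatness of $\underline{\calU} \to \underline{\Chow}$ follows from Kato's theorem that an injective integral homomorphism of fine monoids induces a flat map of monoid algebras (injectivity of $\theta$ coming from the surjectivity of $\bar p$ established next); equivalently, $\theta^{\mathrm{gp}}$ is injective with torsion-free cokernel, the morphism is log smooth, and a log smooth integral morphism has flat underlying morphism.

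For reducedness of the fibers I would upgrade equidimensionality to the finer \emph{saturated} condition, and here the precise shape of the monoids enters. Using $N_{\sigma'} = N_\sigma \cap p^{-1}(Q_\kappa)$ together with $Q_\kappa = \bigcap_{\sigma \in \calN_0(\kappa)} p(\sigma \cap N) \subseteq p(\sigma \cap N)$, I would show $\bar p : N_{\sigma'} \to Q_\kappa$ is surjective: any $q \in Q_\kappa$ lifts to some $n \in \sigma \cap N$, and $p(n) = q \in Q_\kappa$ forces $n \in p^{-1}(Q_\kappa)$, hence $n \in N_{\sigma'}$. This surjectivity on lattice points — valid for every cone of $F'$ and its image, not merely after passing to groups — makes $\theta$ saturated, which is exactly the condition yielding reduced fibers: the fiber over the distinguished point of $X_\kappa$, and likewise over every torus orbit, has coordinate ring $\bC[P_{\sigma'}]/(\theta(P_\kappa \setminus \{0\}))$, the monoid algebra of a saturated fiber monoid, hence reduced.

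I expect the saturation step to be the main obstacle. Integrality and therefore flatness follow formally once one knows that cones map onto cones, but reducedness can fail for a flat integral toric morphism — for instance $x \mapsto x^2$ is flat and integral yet has a non-reduced fiber — so ruling out such multiplicities forces one to use the intersection formula $N_{\sigma'} = N_\sigma \cap p^{-1}(Q_\kappa)$ to produce genuine lattice-point surjectivity rather than surjectivity only after taking groups. Verifying this uniformly over all cones, and confirming that it genuinely yields a reduced fiber monoid algebra, is the technical heart; after that, flatness, integrality, and reduced fibers all drop out of the standard log-geometric correspondences.
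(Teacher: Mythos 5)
The decisive step of your argument --- ``$\theta$ is an integral homomorphism of fine monoids exactly when the associated toric morphism maps cones onto cones'' --- is false, and this is a genuine gap rather than a citation issue, because integrality is precisely what the paper has to work hardest to prove. Only one implication holds: an integral local homomorphism induces a surjection of cones. For the converse, take the lattice inclusion $\bZ^2 \subset \bZ^2 + \bZ\cdot\frac{1}{2}(1,1)$ and $\sigma$ the first quadrant: $\sigma$ maps onto $\sigma$ (an isomorphism of cones, so certainly equidimensional), the corresponding monoid homomorphism is the inclusion $\{(a,b)\in\bN^2 : a+b \text{ even}\} \hookrightarrow \bN^2$, and the induced toric morphism is the quotient $\bA^2 \to \Spec\bC[x^2,xy,y^2]$, which is not flat: it is finite, with general fiber of length $2$ but fiber of length $3$ over the torus-fixed point. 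By Kato's criterion the monoid map is therefore not integral, even though both monoids are fine and saturated and cones map onto cones. Equidimensionality yields flatness only in combination with smoothness of the base (miracle flatness, which is how \cite{ak} use it), and the charts of $\Chow$ are the generally non-smooth monoid algebras built from the $Q_\kappa$ --- their failure of smoothness is the whole reason for the stack structure. Note also that your fallback route (``$\theta^{\mathrm{gp}}$ injective with torsion-free cokernel, hence log smooth, and log smooth integral morphisms are flat'') still presupposes integrality, so it cannot close the gap.

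What a correct proof must exploit, and what the paper's proof does exploit, is the particular shape of the monoids: $N_{\sigma'} = N_\sigma \cap p^{-1}(Q_\kappa)$ with $p$ a quotient of lattices, and the existence of faces of $\sigma'$ mapping isomorphically onto $\kappa$. The paper verifies Kato's equational criterion by hand: the non-flat locus is closed and torus-invariant, so one may assume $\sigma$ and $\kappa$ full-dimensional; then, given $p_1 + q_1 = p_2 + q_2$, one chooses lifts $u_i$ of the extremal rays of $\kappa$ minimizing $p_1(u_i)$, these span a face $\tau$ mapping isomorphically to $\kappa$, and writing $x = p(x) + tv$ one decomposes $p_1 = w + r_1$ with $w \in N_\sigma^\vee$, $r_1 \in Q_\kappa^\vee$ and concludes. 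None of this is formal. On reducedness your instinct is sound and essentially reproves the criterion the paper simply cites (\cite{sam} Lemma 6, \cite{ak} Lemma 5.2), but your surjectivity argument as written only covers $\sigma \in \calN_0(\kappa)$, since $Q_\kappa \subseteq p(\sigma\cap N)$ is definitional only for those cones; for a cone $\sigma$ whose fiber over $\kappa^{\circ}$ is positive-dimensional you must first observe that $\sigma$ has a face in $\calN_0(\kappa)$ (a vertex of the fiber polytope lies in the relative interior of such a face) and lift through it. In short, you have inverted where the difficulty sits: reducedness is the part the paper quotes, and integrality is the part that needs --- and in your proposal lacks --- a real proof.
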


\begin{proof} The fact that the morphism has reduced fibers follows directly from \cite{sam} Lemma 6 (see also Lemma 5.2 \cite{ak}). Thus, we only have to demonstrate integrality.\\

To show integrality, it suffices to work locally on $[V \sslash_C H]$, and so the proof reduces to a statement about monoids. Let $N$ be a lattice and let $L$ be a sublattice so that we can define the quotient lattice, $Q = N / L$. Denote the projection map by $p$. Furthermore, suppose $\kappa \in Q $ is a cone, $\sigma \in N$ is a cone such that $p(\sigma) = \kappa$m and assume that for every face $\tau < \sigma$, there exists a face $\lambda < \kappa$ such that $p(\tau) = \kappa$. That is, faces of $\sigma$ map onto faces of $\kappa$. Finally, assume that for each face $\tau$ mapping to a face $\lambda$, we have that $N_{\tau} := N \cap \tau = p^{-1}(Q_{\lambda})$. To prove integrality (and hence flatness), we prove the following lemma:
\begin{lemma} The dual map $\Hom(Q_{\kappa}, \bN) \xrightarrow{p^{\vee}} \Hom(N_{\sigma}, \bN)$ is an injective, integral map of monoids. Therefore, the map $\bZ[Q_{\kappa}] \to \bZ[N_{\sigma}]$ is flat.\end{lemma}
\begin{proof}
To prove injectivity, note that by definition the monoid $N_{\sigma}$ surjects onto $Q_{\kappa}$. We demonstrate integrality by using the equational criterion. If the map was not flat, the locus where flatness fails is closed and torus invariant. Therefore, if this locus is non-empty it must contain a torus fixed point and so it suffices to consider cones $\sigma$ and $\kappa$ of full dimension in their respective lattices. Suppose that $$ p_1 + q_1 = p_2 + q_2$$
\noindent where $p_i \in N^{\vee}_{\sigma}$ and $q_i \in Q_{\kappa}^{\vee}$. Since we assumed that $\sigma$ and $\kappa$ are full dimensional, we have that $N_{\sigma}^{\vee} = \sigma^{\vee} \cap N^{\vee}$ and $Q_{\kappa}^{\vee} = \kappa^{\vee} \cap Q^{\vee}$. We can thus identity $p_i$ and $q_i$ with vectors in the dual spaces of $N$ and $Q$ respectively. We wish to show that $p_1 = w + r_1$ and $p_2 = w + r_2$, where $w \in N_{\sigma}^{\vee}$, $r_i \in Q_{\kappa}^{\vee}$ and that $q_1 + r_1 = q_2 + r_2$. \\

Let $v_1, ..., v_m$ be the extremal rays of $\kappa$, and let $u_k$ denote the lifts of these extremal rays in $\sigma$. Among such $u_k$, choose $u_1, u_2, ..., u_m$ such that $u_i$ maps to $v_i$ and such that $p_1(u_i)$ is minimal among all possible lifts of $v_i$ to an extremal ray of $\sigma$. Notice that $N_{\tau} = Q_{\kappa}$, as the face $\tau < \sigma$ generated by the $u_i$ is a face mapping isomorphically to $\kappa$. Thus, we can identify $\kappa$ with $\tau$ and we will write $p(x)$ for the unique element of $\tau$ mapping to $p(x) \in \kappa$. As every $x \in \sigma$ can be written uniquely as $p(x) + tv$ where $v \in L$, we will make the following definitions:
%
\begin{center} 
$r_1(x) = p_1\big(p(x)\big)$ \\
$w(x) = p_1(tv).$
\end{center}

Then $p_1 = r_1 + w$ and thus we need to show that $w \in N_{\sigma}^{\vee}$ and $r_1 \in Q_{\kappa}^{\vee}$. This is equivalent to showing that $w$ is nonnegative on $\sigma$ and $r_1$ is nonnegative on $\kappa = \tau$. First note that if $x \in L$, then $w(x) = p_1(x) \geq 0$. If $x$ is an extremal ray not in $l$, then $x$ maps to some extremal ray $p(x) \in \kappa$. Since then $x = p(x) = tv$ and $p_1\big(p(x)\big) < p_1(x)$ by our minimality condition,
$$ w(x) = p_1(tv) = p_1(x) - p_1\big(p(x)\big) \geq 0.$$

Since $w \geq 0$ on all extreme rays of $\sigma$, then it is also nonnegative on their convex hull and so $w \geq 0$ on $\sigma$. Thus $ w \in N_{\sigma}^{\vee}$ as claimed. The claim that $r_1 \in Q_{\kappa}^{\vee}$ follows as $p_1$ is already nonnegative on $\sigma$ and thus also on $\tau = \kappa$. 

Repeating this argument for $r_2$ and taking $x = p(x) \in \tau$, we see that:

$$r_1(x) + q_1(x) = p_1(x) + q_1(x) = p_2(x) + q_2(x) = r_2(x) + q_2(x).$$

Thus, $r_q + q_1 = r_2 + q_2$ and the proof is complete.
\end{proof}

This concludes the proof that the universal family has reduced fibers, and is an integral (and thus flat) morphism of logarithmic stacks.
\end{proof}

\subsection{Universal property}

Finally, we introduce the universal property of the Chow quotient stack which first appeared (in less generality) in \cite{sam}. This will be the key tool used in proving the main theorem.

\begin{defn} A toric family of toric $H$-stacks is an $H$-equivariant morphism $X \to S$ of toric stacks which is flat, proper, and equidimensional with reduced fibers. Let $\Lambda$ be the character lattice of $X$ and let $\Pi$ be the character lattice of $S$. Then we assume that $H$ is a subgroup of the torus $T(\Lambda)$ of $X$, as is the kernel of the map of tori: $T(\Lambda) \to T(\Pi)$. Furthermore, a morphism of families between $X \to S$ and $X' \to S'$ is a diagram:

\[
\begin{CD}
X @>>> X'\\
@VVV @VVV\\
S @ >>> S'
\end{CD}
\]

taking $H$ to $H$. \end{defn}

Let $\calC$ denote the category of families of toric $H$-stacks.

\begin{defn} A toric family of toric $H$-stacks mapping to a projective toric variety $V$ is a family of toric $H$-stacks $X \to S$ as in the previous definition together with an $H$-equivariant morphism to $V$. A morphism between families is a morphism in the category $\calC$ which commutes with the maps to $V$. Denote the category of such toric $H$-stacks mapping to $V$ by $\calC_V$. \end{defn}

\begin{theorem}[Universal Property]\label{univ} The Chow quotient stack $\calU \to [V \sslash_C H]$ is the terminal object of $\calC_V$. \end{theorem}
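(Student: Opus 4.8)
The plan is to show that the Chow quotient stack $\calU \to [V \sslash_C H]$ is terminal in $\calC_V$ by producing, for any family $(X \to S) \in \calC_V$ equipped with its $H$-equivariant map to $V$, a unique morphism to $\calU \to \Chow$ in $\calC_V$. Since both the source and target are toric stacks in the sense of Definition 2.1 (given by toric stack data), the strategy is to translate everything into the combinatorics of fans, lattices, and submonoids, and to build the required morphism entirely at the level of toric stack data as in Definition 2.3. First I would record the combinatorial data of the family $X \to S$: its character lattices $\Lambda$ (for $X$) and $\Pi$ (for $S$), the fans defining the two toric stacks, and the submonoid data $\{N_\sigma\}$, $\{N_\tau\}$. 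The hypotheses that $H \subset T(\Lambda)$ and that the kernel of $T(\Lambda) \to T(\Pi)$ contains $H$ give exactly the lattice-theoretic compatibility needed to relate the projection $p : N \to Q = N/L$ defining the Chow fan $G$ to the projection $\Lambda \to \Pi$ coming from the family.

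Next I would exploit the $H$-equivariant map $X \to V$ to produce a morphism of fans from the fan of $X$ to the fan $F$ of $V$, and the structure map $X \to S$ together with the $H$-action to see that $S$ must receive a map from (the coarse data of) $V$ compatible with the projection $p$. The crucial point is that the map $X \to V$, being $H$-equivariant, factors through the $H$-orbit data, and flatness together with reduced fibers (the hypotheses in Definition 3.10) force the fibers of $X \to S$ to be broken toric varieties whose cycles are exactly of the form $E_\kappa$ built in the universal-family construction. I would then invoke the terminality already established combinatorially: the fan $F'$ of $\calU$ was shown (in the Lemma following Theorem \ref{univ}'s setup, \cite{sam} Lemma 4) to be the \emph{terminal} object in the category $\calD$ of fans in $N$ mapping to both $F$ and $G$, and $N_{\sigma'} = N_\sigma \cap p^{-1}(Q_\tau)$ is the \emph{minimal} compatible monoid. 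Thus any family in $\calC_V$, whose data consists precisely of a fan mapping to both $F$ and $G$ with compatible submonoids, admits a unique map of toric stack data to $(F', N_{\sigma'}, N)$, and hence a unique morphism of toric stacks to $\calU$; this map automatically descends to a map $S \to \Chow$ commuting with the projections to $V$.

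Uniqueness then follows from the combined minimality (for monoids) and terminality (for fans) properties, together with Lemma \ref{keylemma}: any two candidate morphisms would induce morphisms of toric stack data refining one another in both directions, forcing them to agree. The main obstacle I anticipate is not the combinatorial terminality itself, which is essentially packaged by the earlier lemmas, but rather verifying that an \emph{arbitrary} family in $\calC_V$ genuinely arises from toric stack data of the required shape — i.e. that the abstract geometric hypotheses (flat, proper, equidimensional, reduced fibers, $H$-equivariant) really do translate into a fan in $N$ mapping to $F$ and $G$ with the monoid condition $N_\tau = p^{-1}(Q_\lambda)$ on faces. This translation step, which bridges the geometry of the family and the pure combinatorics, is where the hypotheses of Definition 3.10 must all be used, and I would spend the bulk of the argument justifying that the reduced-fiber and equidimensionality conditions pin down the submonoids to be exactly the minimal ones, so that the universal map exists and is forced to be unique.
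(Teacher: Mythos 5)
Your proposal has the right skeleton, but it defers, rather than supplies, the step that constitutes essentially the entire content of the paper's proof. A family $(X \to S) \in \calC_V$ gives a fan $\Phi$ in the lattice $\Lambda$ of $X$, a fan $\Gamma$ in the lattice $\Pi$ of $S$, a map of fans $\Phi \to F$, and a projection $\Phi \to \Gamma$; what it does \emph{not} give, a priori, is any map of fans from $\Gamma$ to the Chow fan $G$. The terminality of $F'$ in the category $\calD$ (\cite{sam} Lemma 4) can only be invoked once one has maps of fans to \emph{both} $F$ and $G$, so your sentence ``any family in $\calC_V$, whose data consists precisely of a fan mapping to both $F$ and $G$\dots'' assumes exactly what must be proved. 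The paper's proof of Theorem \ref{univ} is devoted to this point: the kernel condition yields a lattice map $i \colon \Pi \to Q$, and one must show $i$ carries each cone $k \in \Gamma$ into some cone $\kappa \in G$. This is done by recalling that cones of $G$ are closures of equivalence classes of vectors $\psi$ under the relation $N(\psi) = N(\psi')$, and then proving $N\big(i(v_1)\big) = N\big(i(v_2)\big)$ for all $v_1, v_2 \in k^{\circ}$ --- an argument that converts the geometric hypotheses into combinatorics: equidimensionality via Karu's criterion (\cite{karu}, Lemma 4.1) to know that cones of $\Phi$ map \emph{onto} cones of $\Gamma$, and properness to know that the preimages of $v_i + L$ cover the whole fiber. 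You name these hypotheses but never actually use them. Likewise, the second half of the proof --- that the monoids $\Gamma_k$ factor through $G_\kappa$ --- requires the reduced-fibers hypothesis (surjectivity of $\Phi_s \to \Gamma_k$ at the level of monoids) together with the identity $G_\kappa = \bigcap_{\sigma \in N_0(\kappa)} p(N_\sigma)$; your proposal only asserts that reducedness ``pins down'' the monoids without giving the argument.

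Two smaller problems. First, the claim that ``$S$ must receive a map from (the coarse data of) $V$'' is backwards: nothing maps from $V$ to $S$, and no such map is needed; similarly, the assertion that the fibers have cycles ``exactly of the form $E_\kappa$'' plays no role in, and is not established by, the actual argument. Second, Lemma \ref{keylemma} cannot deliver uniqueness of the morphism to $\calU \to \Chow$: that lemma concerns two toric stacks admitting morphisms in both directions, not two parallel morphisms from the same source. Uniqueness instead follows from the rigidity of toric stack data: compatibility with the maps to $V$ and with $H$ forces the lattice maps $\Lambda \to N$ and $\Pi \to Q$ to be $j$ and $i$ respectively, and a morphism of toric stack data is determined by its lattice map.
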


\begin{proof} 
To simplify the exposition, we will first prove that any family of toric $H$-varieties in $\calC_V$ factors through $[ V \sslash_{C} H]$. Such a family is equivalent to a diagram of lattices:

\begin{center}
$\begin{CD}
\Lambda @ >j>> N \\
@V\pi VV\\
\Pi
\end{CD}$
\end{center}

such that $j$ maps  $L \subset \Lambda$, the kernel of $\pi$,  to $L \subset N$ isomorphically,

and also a diagram of fans in the vector spaces spanned by these lattices:

\[
\begin{CD}
\Phi @>j>>F\\
@V\pi VV\\
\Gamma
\end{CD}
\]

As before, let $p: N \to Q$ denote the map of lattices induced by the map $V \to V \sslash_C H$. Since $j$ takes the kernel of $\pi$ to the kernel of $p$, we have an induced map of lattices $i: \Pi \to Q$ giving the following commutative diagram:

\[
\begin{CD}
\Lambda @>j>> N\\
@V\pi VV @VpVV\\
\Pi @>i>> Q
\end{CD}
\]

\begin{lemma} The map $i$ induces a map of fans $i: \Gamma \to G$ from the fan of $S$ to the fan of $[V \sslash_C H]$. \end{lemma}
\begin{proof} We must show that any cone $k \subset \Gamma$ maps into a cone $\kappa \in G$. Let $v_1$ and $v_2$ be two arbitrary vectors in the interior of $\kappa$. As the map $X \to S$ is equidimensional, it takes cones of $\Phi$ onto cones of $\Gamma$ by the criterion of Lemma 4.1 of \cite{karu}. Now let $\{s\}$ be the collection of cones in $\Phi$ with image $k$. Denote by $(v_i)_s$ a preimage of $v_i$ for each $s$. Now consider the collection:

\begin{center}$N\big(i(v_i)\big) = N\big(j(v_i)_s\big) = \{ \sigma \in F \mid p^{-1}(v_i) + L \cap \sigma^o = pt \}$ \end{center}

which first appeared as formula (3.1) in Section 3.2. As $j$ takes $L$ to $L$, then $(v_i)_s + L$ necessarily maps to $j(v_i)_s + L$. Notice that the collection $\{s\}$ covers $(v_i) + L$ since by properness of the maps in the families, any of the preimages of $v_i$ must be the whole $(v_i) + L$. Furthermore, any cone in the preimage maps to the interior of $k$, hence onto $k$ by the equidimensional criterion of Karu (\cite{karu} 4.1). Since $\Phi$ maps to $F$, cones of $\Phi$ must map into cones of $F$ and then for each cone $\sigma \in N\big(i(v_1)\big)$, there is a cone $s$ so that $s^o$ maps to $\sigma^o$. Therefore, $i(v_2) + L \cap \sigma$ is a point and thus $N\big(i(v_1)\big) \subset N\big(i(v_2)\big)$. By symmetry, $N\big(i(v_1)\big) = N\big(i(v_2)\big)$. By definition then $i(v_1)$ and $i(v_2)$ are in the interior of the same cone of the Chow quotient. Calling this cone $\kappa$ we see that $k^o \mapsto \kappa^o$ and thus $k \mapsto \kappa$. This yields the following diagram:

\[
\begin{CD}
\Phi @>j>> F\\
@V \pi VV\\
\Gamma @ >i>> G
\end{CD}
\]

\end{proof}

By the minimality of the universal family we obtain a diagram:

\begin{center}
$\begin{CD}
\Phi @>j'>> F' @>>> F\\
@V \pi VV @VpVV\\
\Gamma @>i>> G\\
\end{CD}$
\end{center}

To conclude, we must show that the submonoids $\Gamma_k = k \cap \Gamma$ factor through the submonoids $G_k$ for any cone $k$ whose interior maps into the interior of $\kappa$. Let $v \in G_{\kappa}$, then since $X \to S$ has reduced fibers, cones in $\Phi$ map onto cones in $\Gamma$. The reduced fibers condition tells us that if we have $s$ mapping to $k$ then the map $\Phi_s \to \Gamma_k$ is surjective. Let $\{s\}$ denote the collection of cones in $\Phi$ mapping to $k$. Choose a list $v_s \in \Phi_s$ of $v$ for each $S$. Define:

$$ N_0(\kappa) = \{ \sigma \in F : w + L \cap \sigma^o = pt \} $$

for all vectors $w$ in the interior of $\kappa$. By definition, $$G_{\kappa} = \displaystyle\bigcap_{\sigma \in N_0(\kappa)} p(N_{\sigma}).$$ Repeating the argument above shows that every cone $\sigma \in N_0(\kappa)$ contains the image of some cone $s$, and therefore $j(\Lambda_s) \subset N_{\sigma}$. Therefore, $i(v) = p\big(j'(v_s)\big)$ for all $s$ is in $G_{\kappa}$. \\

We have given the proof in the case of a toric $H$-variety, that is, in the case where all monoids $\Gamma_k = \Gamma \cap k$. Observe however, that for the general case of a toric stack, we have $\Gamma_k \subset \Gamma \cap k$, and hence $\Gamma_k$ a fortiori factors through $G_k$ by the same argument. This concludes the proof of the universal property of the Chow quotient stack.
\end{proof}

\subsection{Geometry of the universal family}\label{6}

We now undertake a more careful study of the universal family $\mathcal{U} \rightarrow [V \sslash_C H]$. Our main goal is to connect the minimality property of the Chow quotient with the \textit{minimality} (or basicness) condition appearing in logarithmic geometry, as seen for instance in the work of Abramovich-Chen \cite{ac}, Gross-Siebert \cite{gs}, and Gillam \cite{danny}. Indeed, such a connection has already been established in \cite{sam} in the case of logarithmic stable maps, that is, when the dimension of $H$ is 1. Here we give an analogous description in the higher dimensional case.

 In the one dimensional case, the cones in the fan of $\mathcal{U}$ can be explicitly described in terms of the cones in the fan of the Chow quotient stack. This is not true in the higher dimensional case, as the cones of $\mathcal{U}$ can essentially be arbitrary; however, the same description that works in the one dimensional case works for cones of $\mathcal{U}$ mapping with relative dimension $1$. To state the result, we need a lemma: 

\begin{lemma} \label{cones}
Let $\sigma$ be a cone in the fan $F'$ of $\calU$ of dimension $k+1$ mapping onto a cone $\tau$ of dimension $k$ in the fan $G$ of $[V \sslash_C H]$. Then $\sigma$ has precisely either one or two faces mapping isomorphically to $\tau$. 
\end{lemma}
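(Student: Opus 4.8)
The plan is to exploit the explicit description of the fan $F'$ furnished by the terminal-object construction, namely that every cone of $F'$ has the form $p^{-1}(\kappa)\cap\sigma_0$ with $\kappa\in G$ and $\sigma_0\in F$. Since $p(\sigma)=\tau$ with $\tau\in G$, replacing $\kappa$ by $\tau$ shows $\sigma=p^{-1}(\tau)\cap\sigma_0$ for a suitable cone $\sigma_0$ of the fan of $V$; in particular $\sigma$ inherits strict convexity from $\sigma_0$. Because $\dim\sigma=k+1$ while $p(\sigma)=\tau$ has dimension $k$, the line $\ell:=\mathrm{Lin}(\sigma)\cap\ker p$ is $1$-dimensional, and a face of $\sigma$ maps isomorphically onto $\tau$ exactly when it is $k$-dimensional and surjects onto $\tau$ under $p$ (equivalently, $p$ restricts to an injection on it, i.e. its linear span misses $\ell$). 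I would therefore reformulate the lemma as the problem of counting the $k$-dimensional faces of $\sigma$ on which $p$ restricts to an isomorphism onto $\tau$.

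First I would analyze the behavior over the relative interior $\tau^{\circ}$. Fixing a generator $v$ of $\ell$ and a linear height $h$ with $h(v)=1$, the fibre $\sigma\cap p^{-1}(q)$ for $q\in\tau^{\circ}$ is a segment in the $v$-direction, with endpoints recorded by a convex function $\beta(q)$ (lower) and a concave function $\gamma(q)$ (upper), at least one of which is finite. The crucial input is that $\tau$ is a cone of the projection fan: by the construction of $G$ in \cite{ksz} the combinatorial type of the fibre $\sigma_0\cap(L+\psi)$ — equivalently the class $N(\psi)$ and the set $\calN_0(\tau)$ — is constant as $p(\psi)$ ranges over $\tau^{\circ}$. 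Consequently the lower (resp. upper) endpoint lies in the relative interior of one fixed face $\phi_-$ (resp. $\phi_+$) of $\sigma$ for all $q\in\tau^{\circ}$, and $\beta,\gamma$ are linear on $\tau$. Each finite endpoint function thus exhibits $\phi_{\pm}$ as a $k$-dimensional face on which $p$ is an isomorphism onto $\tau$. Conversely, any face mapping isomorphically onto $\tau$ lies in $\partial\sigma$ and meets every fibre over $\tau^{\circ}$ in a single endpoint, hence must coincide with $\phi_-$ or $\phi_+$; these are the only two candidates.

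It then remains to count how many of $\phi_-,\phi_+$ are genuine finite faces, and here strict convexity does the work. The recession cone of the generic fibre is $\sigma\cap\ell$, and since $\ell\subseteq\ker p\subseteq p^{-1}(\tau)$ this equals $\sigma_0\cap\ell$. As $\sigma_0$ contains no line, $\sigma_0\cap\ell$ is either $\{0\}$ or a single ray. In the first case both $\beta$ and $\gamma$ are finite, the fibres are bounded, and the distinct faces $\phi_-\neq\phi_+$ both map isomorphically onto $\tau$; in the second case one endpoint escapes to infinity, leaving exactly one finite boundary face. This produces the asserted dichotomy. I expect the main obstacle to be the middle step: making precise, and correctly attributing to the $\mathrm{KSZ}$ construction, the claim that $\beta$ and $\gamma$ are linear over a cone of $G$, that is, identifying the piecewise-linearity domains of these endpoint functions with the equivalence classes $N(\psi)$ defining $G$. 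Once that constancy is established, both the upper bound of two and the lower bound of one follow formally, and the potentially troublesome configurations (where a projected cone has a ``tent-shaped'' non-linear boundary) are precisely those excluded by the requirement that $\tau$ be an honest cone of $G$.
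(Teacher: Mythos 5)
Your route is genuinely different from the paper's. The paper's own proof is two lines: since $\sigma$ maps to $\tau$ with relative dimension one, $\mathrm{span}(\sigma)\cap L=\bR u$ is a line, and the statement is then quoted as the already-proved one-dimensional case, Lemma 8 of \cite{sam}. You instead give a direct, self-contained convex-geometric argument in $N_{\bR}$: write $\sigma=p^{-1}(\tau)\cap\sigma_0$, study the one-dimensional fibers over $\tau^{\circ}$ via the endpoint functions $\beta,\gamma$, and obtain the dichotomy from the recession cone $\sigma_0\cap\ell$ being either $\{0\}$ or a single ray. Much of this skeleton is correct: the identification of ``faces mapping isomorphically'' with ``$k$-dimensional faces surjecting onto $\tau$'' is right; for $q\in\tau^{\circ}$ the relative interior of the fiber is exactly the fiber intersected with $\sigma^{\circ}$, so any point of the fiber lying on a proper face is an endpoint, which gives your ``only two candidates'' step; and the final count via strict convexity is correct.

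However, the step you yourself flag as the main obstacle is a genuine gap as written, and it is the crux of your argument. Constancy of the set $N(\psi)$ over $\tau^{\circ}$ does not by itself imply that the lower endpoint stays in the relative interior of one fixed face: $N(\psi)$ records only \emph{which} cones of $F$ are met by $L+\psi$, not which of them contains the endpoint, so a priori the endpoint could migrate between two faces that both belong to $N(\psi)$ throughout $\tau^{\circ}$ (equivalently, $\beta$ could a priori be convex piecewise linear with a kink inside $\tau^{\circ}$). The word ``Consequently'' hides the needed lemma. One way to supply it: the faces of the fiber $\sigma_0\cap(L+\psi)$ are exactly the sets $\phi\cap(L+\psi)$ for faces $\phi$ of $\sigma_0$ with $\phi^{\circ}\cap(L+\psi)\neq\emptyset$, and any such intersection has dimension $\dim\big(\mathrm{Lin}(\phi)\cap L\big)$; hence the endpoints (vertices of the fiber) correspond bijectively to the faces $\phi\in N(\psi)$ with $\mathrm{Lin}(\phi)\cap L=0$, a set that is constant on $\tau^{\circ}$ precisely because $\tau$ is a cone of $G$. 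A short connectedness argument (the locus in $\tau^{\circ}$ where the lower endpoint lies in a given such $\phi^{\circ}$ is closed, and relative interiors of distinct faces are disjoint) then rules out any jump, giving simultaneously the constancy of the endpoint faces and the linearity of $\beta$ and $\gamma$. With that lemma inserted your proof is complete, and arguably more illuminating than the paper's citation-based reduction; without it, the claim that the ``tent-shaped'' configurations are exactly those excluded by $\tau\in G$ remains an assertion rather than a proof.
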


\begin{proof} 
Since we assume that $\sigma$ maps with relative dimension $1$, we have 
\begin{align*}
\dim{\textup{span}(\sigma) \cap L}=1 
\end{align*}
\noindent and hence $\textup{span}(\sigma) \cap L = \bR u$ for some vector $u \in H$. The proof thus reduces to the one dimensional case, which is Lemma 8 of \cite{sam}.  In fact, it also follows that if $\sigma$ has two faces mapping isomorphically to $\tau$, then the two preimages of a vector of $\tau$ differ by a multiple of the same vector $u$.
\end{proof}

\noindent Hence, in the second case, it makes sense to define the following map $c: Q_\tau \rightarrow \bN$. For a lattice point $v$ in $\tau$, there are precisely two lifts $v_1,v_2$ in $\sigma$. We define $c(v)$ to be one less than the number of lattice points in $\sigma$ on the ray connecting $v_1$ with $v_2$: 
\begin{align*}
\textrm{card}(\vec{v_1v_2} \cap N_\sigma)-1
\end{align*}  

With this we can state the following lemma:

\begin{lemma}
With notation as above, if $\sigma$ has precisely one face mapping isomorphically to $\tau$, then $N_\sigma \cong Q_\tau \times \bN$. 
If $\sigma$ has precisely two faces mapping isomorphically to $\tau$, then $N_\sigma \cong Q_\tau \times_\bN \bN^2$, where the map $Q_\tau \rightarrow \bN$ is the map $c$ defined above and the map $\bN^2 \rightarrow \bN$ is the addition map: $(a,b) \mapsto a+b$.
\end{lemma}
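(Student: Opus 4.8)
The plan is to give an explicit combinatorial parametrization of the lattice points of $N_\sigma$ and read off the asserted (fiber) product structure. Recall from the construction of the universal family that its monoid on $\sigma$ is
$N_\sigma = N_{\sigma_V} \cap p^{-1}(Q_\tau) = \{\, x \in \sigma \cap N : p(x) \in Q_\tau \,\}$,
and that on any face of $\sigma$ mapping isomorphically to $\tau$ the induced submonoid is exactly $Q_\tau$ (this is the observation $N_\tau = Q_\kappa$ used in the integrality proof above). By Lemma \ref{cones} we have $\mathrm{span}(\sigma) \cap L = \bR u$, and we may take $u$ primitive in $N$, so that the points of $N$ lying over a fixed $q \in Q_\tau$ form a coset $\tilde v(q) + \bZ u$ of the rank-one lattice $\bR u \cap N = \bZ u$. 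Intersecting with $\sigma$ cuts out either a ray or a segment according to whether $\sigma$ has one or two faces mapping isomorphically to $\tau$, and the whole argument amounts to tracking the "height" coordinate along $u$ as $q$ varies.

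For the one-face case, let $\tilde\tau$ be the unique face mapping isomorphically to $\tau$ and let $\tilde v \colon Q_\tau \to N_{\tilde\tau} \subset N$ be the (additive) inverse of $p|_{\tilde\tau}$. Because $\sigma = \tilde\tau + \bR_{\geq 0} u$, every $x \in N_\sigma$ decomposes uniquely as $x = \tilde v(p(x)) + t u$ with $t \in \bN$, the integrality of $t$ being forced by the primitivity of $u$. I would then check that $x \mapsto (p(x), t)$ is a bijection which, using additivity of $\tilde v$, is a homomorphism of monoids, giving $N_\sigma \cong Q_\tau \times \bN$.

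For the two-face case, let $\tilde\tau_1, \tilde\tau_2$ be the two faces, with additive lifts $\tilde v_1, \tilde v_2$, so that the fiber over $q$ is the segment from $\tilde v_1(q)$ to $\tilde v_2(q)$. By Lemma \ref{cones} this segment is parallel to $u$, hence $\tilde v_2(q) = \tilde v_1(q) + c(q)\, u$, where $c(q) + 1$ is the number of lattice points on it — precisely the map $c$ defined above. Writing $x = \tilde v_1(p(x)) + a u$ with $0 \leq a \leq c(p(x))$ and setting $b = c(p(x)) - a$, I would send $x \mapsto (p(x), (a,b))$; since $a + b = c(p(x))$ this lands in $Q_\tau \times_{\bN} \bN^2$, and it is visibly a bijection onto it.

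The step I expect to be the main obstacle — and the one on which both isomorphisms rest — is verifying that these assignments respect the monoid structures, which reduces to the additivity of the lift maps $\tilde v_i$ and, in the second case, to the additivity of $c$. Additivity of each $\tilde v_i$ holds because $\tilde\tau_i \to \tau$ is an isomorphism of cones whose monoid is $Q_\tau$, so its inverse is a monoid homomorphism; additivity of $c$ then follows from $c(q)\,u = \tilde v_2(q) - \tilde v_1(q)$ together with the additivity of $\tilde v_1$ and $\tilde v_2$. Granting this, the identity $c(p(x) + p(x')) = c(p(x)) + c(p(x'))$ guarantees that the height coordinates add correctly and that the image remains in the fiber product, completing the proof.
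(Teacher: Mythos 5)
Your proof is correct, but it follows a genuinely different route from the paper's. The paper's proof is a two-line reduction: by the argument of Lemma \ref{cones}, $\textup{span}(\sigma)\cap L=\bR u$ is one-dimensional, so the statement reduces to the case $\dim H=1$, which is then quoted from Lemma 9 of \cite{sam}. You use the same starting observation but then prove the statement from scratch: you identify $N_\sigma=\sigma\cap N\cap p^{-1}(Q_\tau)$, identify the monoid on each face mapping isomorphically to $\tau$ with $Q_\tau$, and parametrize $N_\sigma$ by its image in $Q_\tau$ together with a height along the primitive vector $u$. In effect you have inlined the cited result: your decompositions $x=\tilde v(p(x))+tu$ and $x=\tilde v_1(p(x))+au$ with $a+b=c(p(x))$ are precisely the explicit descriptions the paper records only \emph{after} the lemma, by ``unraveling'' the proof in \cite{sam}. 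Your route buys self-containedness and, notably, an explicit verification that $c$ is additive, i.e.\ a monoid homomorphism $Q_\tau\to\bN$ --- a point the paper never checks, although it is needed even for the fiber product $Q_\tau\times_\bN\bN^2$ to make sense as a monoid. The paper's route buys brevity and avoids duplicating work already in the literature.

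Two minor imprecisions in your write-up, neither of which breaks the argument. First, the points of $N$ lying over a fixed $q\in Q_\tau$ form a coset of the full rank-$(\dim H)$ lattice $L$, not of $\bZ u$; the correct assertion, and the one your argument actually uses, is that the points of $N\cap\textup{span}(\sigma)$ over $q$ form the coset $\tilde v(q)+\bZ u$. Second, two facts are used silently: that every point of $\sigma$ (including over boundary points of $\tau$) lies on the ray, respectively segment, through the lift(s) of its image --- this follows because the union of these rays or segments is closed, convex, and contains the dense subset $p^{-1}(\tau^{\circ})\cap\sigma$, hence equals $\sigma$ --- and that the lattice points of a distinguished face surject onto $Q_\tau$ under $p$, which is where the definition $Q_\tau=\bigcap_{\sigma'\in N_0(\tau)}p(N_{\sigma'})$ and the identification of faces over $\tau$ with elements of $N_0(\tau)$ enter.
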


\begin{proof}
By the same argument as in the previous lemma, the statement reduces to the case when $H$ is one dimensional, which is treated in \cite{sam} Lemma 9. 
\end{proof}

\noindent In fact unraveling the proof in \cite{sam} gives the following description of the cone $\sigma$: in the first case, every point $x \in N_\sigma$ can be uniquely written as $p(x) + nu$, where $p(x)$ denotes (with abuse of notation) the unique lift of $p(x) \in Q_\tau$ in the face $\tau_1 \cong \tau$, where $u \in L$ is a fixed vector. The vector $u$ can be described canonically: it is the primitive vector on the face $\sigma \cap p^{-1}(0)$. In the second case, an $x \in N_\sigma$ can be uniquely written as  \begin{align*}
x = \frac{a}{c\big(p(x)\big)}p_1(x) + \frac{b}{c\big(p(x)\big)}p_2(x)
\end{align*} \noindent with $a+b=c\big(p(x)\big)$, and with $p_1(x),p_2(x)$ the unique lifts of $p(x)$ in the faces $\tau_1,\tau_2$. Note that in this case any two lifts of the same element in $Q_\tau$ also differ by a multiple of a unique vector $u$ in $L$, though now the choice of $u$ is not canonical. However, once we choose an ordering of the two faces $\tau_1,\tau_2$ isomorphic to $\tau$, we can take $u$ to be the primitive vector in the direction $p_2(x)-p_1(x)$. Summarizing:

\begin{defn} \label{defnreldim}
Let $\sigma \in F'$ be a cone mapping to a cone $\tau \in G$ with relative dimension $1$. Let $K \subset L$ be the one dimensional space $\textup{span}(\sigma) \cap L$. We denote by $u_\sigma$ the primitive vector $u \in K \subset L$ in the direction: 
\begin{itemize}
\item[(a)] $x-p(x)$ if $\sigma$ has a unique face mapping to $\tau$ isomorphically; 
\item[(b)] $p_2(x)-p_1(x)$ if $\sigma$ has precisely two faces mapping to $\tau$ isomorphically. 
\end{itemize}
\end{defn}

Geometrically, a cone $\tau'$ mapping to $\tau \in G$ isomorphically corresponds to a generic component of the fiber of $\mathcal{U}$ over the special point $e_\tau$ of $[V \sslash_C H]$, that is, of the identity of the torus of the toric stratum corresponding to $\tau$. A cone $\sigma$ mapping to $\tau$ with relative dimension $1$ is a divisor in this fiber. The content of Lemma \ref{cones} is then that any such toric divisor in the fiber appears either isolated in a generic component, like a marking in Gromov-Witten theory, or as the intersection of precisely two generic components.

It thus makes sense to call two cones $\tau_1,\tau_2$ mapping isomorphically to $\tau$ \textit{adjacent} if they are the faces of the same cone of dimension $\dim{\tau}+1$. Furthermore, we can call two such cones \textit{comparable} if there is a sequence $\rho_0=\tau_1, \rho_1, \cdots,\rho_n=\tau_2$ such that $\rho_i$ and $\rho_{i+1}$ are adjacent.

\begin{lemma} \label{cones2}
Let $\tau_1$ and $\tau_2$ be any two cones in $F'$ mapping isomorphically to $\tau$. Then $\tau_1$ and $\tau_2$ are comparable. 
\end{lemma}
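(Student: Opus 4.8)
The plan is to reinterpret both \emph{adjacency} and \emph{comparability} in terms of the geometry of a single fiber of $\calU \to \Chow$, and then to deduce comparability from the connectedness of that fiber. Fix a vector $w$ in the relative interior of $\tau$ and consider the affine slice $p^{-1}(w)$, a coset of $L_{\bR}$ inside $N_{\bR}$. Intersecting it with the cones $\sigma' \in F'$ that map \emph{onto} $\tau$ produces a polyhedral complex $\Delta_w$ whose cells are the slices $\sigma' \cap p^{-1}(w)$; this is exactly the polyhedral picture of the fiber of $\calU$ over the distinguished point $e_\tau$. Under this dictionary the $0$-cells of $\Delta_w$ are precisely the cones $\tau_i$ mapping isomorphically to $\tau$ (the generic components of the fiber), while the bounded $1$-cells are precisely the cones $\sigma$ of dimension $\dim\tau+1$ having two faces mapping isomorphically to $\tau$, by Lemma \ref{cones}. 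Thus $\tau_1$ and $\tau_2$ are \emph{adjacent} exactly when they are the two endpoints of a bounded edge of $\Delta_w$, and \emph{comparable} exactly when they lie in the same component of the graph on the vertices of $\Delta_w$ whose edges are its bounded $1$-cells. The lemma is therefore equivalent to the assertion that this bounded-edge graph is connected.

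The one geometric input I would record is that the fiber of $\calU \to \Chow$ over $e_\tau$ is connected, being a connected stable toric variety (this is the connected-fibers theorem announced in the introduction; alternatively it is a flat and proper degeneration of the irreducible orbit closures $\overline{Hx}$, which are connected as orbits of the connected group $H$). Translating through the dictionary above, connectedness of the fiber says exactly that the support $|\Delta_w|$ is connected as a topological space. It then remains to pass from topological connectedness of $|\Delta_w|$ to connectedness of its bounded-edge graph.

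For this I would use two facts about polyhedra. First, each maximal cell $\sigma' \cap p^{-1}(w)$ is the slice of a strongly convex cone, hence a \emph{pointed} polyhedron: its recession cone $\sigma' \cap L_{\bR}$ is strongly convex, so there is a linear functional $\ell$ with $\ell > 0$ on $(\sigma' \cap L_{\bR})\setminus\{0\}$. A simplex-method argument then shows the vertices of the cell are joined by its bounded edges: from any non-minimal vertex an $\ell$-decreasing edge must be bounded (since $\ell$ strictly increases along every unbounded edge), so every vertex connects to the unique $\ell$-minimal one through bounded edges. Second, the cells of $\Delta_w$ meet only along common faces. Given two vertices $a,b$, connectedness of $|\Delta_w|$ yields a chain of cells $C_0,\dots,C_m$ with consecutive cells sharing a face, hence a common vertex; replacing the path inside each $C_i$ by bounded edges gives a walk from $a$ to $b$ along bounded edges, so $\tau_1$ and $\tau_2$ are comparable.

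The main obstacle is precisely this last passage: topological connectedness of the fiber does not by itself prevent two generic components from meeting only in high codimension, which would leave them non-adjacent. What saves the argument is the within-cell connectivity of pointed polyhedra, guaranteeing that \emph{all} vertices of a single maximal cell are mutually comparable even when not pairwise adjacent (for instance opposite vertices of a quadrilateral cell); unbounded edges are harmless, since a ray is incident to a single vertex and cannot be traversed between two distinct vertices. Checking the pointedness of the cells and the face-to-face gluing of $\Delta_w$ — both immediate from $F'$ being a fan and from $\sigma' \cap L_{\bR}$ being strongly convex — is the only routine verification that remains.
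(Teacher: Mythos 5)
Your proof is correct and follows the same overall route as the paper: reduce the statement to connecting the $0$-dimensional strata of the fiber over $e_\tau$ through its $1$-dimensional strata, viewed as the polyhedral complex $\Delta_w$ of slices $\sigma' \cap p^{-1}(w)$, $w \in \tau^{\circ}$. The difference is that the paper disposes of this final step in one sentence (``This is clear since the fiber is a polyhedral complex''), whereas you actually prove it --- and your worry is legitimate: a polyhedral complex can be disconnected, or connected yet with maximal cells meeting only in codimension $\geq 2$, so the paper's assertion as stated needs exactly the two inputs you supply. These are (i) connectedness of the support $|\Delta_w|$, and (ii) the within-cell bounded-edge connectivity of pointed polyhedra: the recession cone $\sigma' \cap L_{\bR}$ of each cell is strongly convex, a generic linear functional $\ell$ positive on it strictly increases along every unbounded edge, so simplex steps toward the $\ell$-minimal vertex use only bounded edges. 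Together with the facts that cells meet along common faces (because $F'$ is a fan) and that every nonempty face of a pointed polyhedron contains a vertex, your chain argument goes through. This pointed-polyhedron lemma is genuine content that the paper's two-line proof does not contain.

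One caution on input (i): of your two justifications for connectedness of the fiber, the first is circular in the paper's logic, since the corollary that fibers of $\calU \to \Chow$ are connected stable toric varieties is deduced in the paper \emph{from} Lemma \ref{cones} and Lemma \ref{cones2}; you cannot quote it here. Your alternative --- the fiber is a proper flat degeneration of the irreducible orbit closures $\overline{H\cdot x}$, so Stein factorization plus Zariski's main theorem over the normal base forces all fibers to be connected --- is independent and fine. There is an even shorter route available in this setting: $V$ is projective, so $F$ is complete and $|F'| = N_{\bR}$; moreover, for any cone $\sigma'$ of $F'$ meeting $p^{-1}(w)$, the intersection $\sigma' \cap p^{-1}(\tau)$ is a face of $\sigma'$ mapping onto $\tau$ and having the same slice along $p^{-1}(w)$, so $|\Delta_w|$ is the entire affine subspace $p^{-1}(w)$ --- connected, indeed convex --- with no algebraic geometry needed.
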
 

\begin{proof}
We need to show that given two cones in $F'$, $\tau_1$ and $\tau_2$, there exists a sequence of cones $$\rho_0=\tau_1, \rho_1, \cdots \rho_n=\tau_2,$$ with $\rho_i$ mapping isomorphically to $\tau$, and cones $\sigma_1,\cdots,\sigma_n$ of dimension $\dim{\tau}+1$ so that $\rho_i,\rho_{i+1}$ are faces of $\sigma_{i+1}$. Since the fibers of the fans $F' \rightarrow G$ are isomorphic in the interior of $\tau$, the statement reduces to the following: any two $0$ dimensional strata in the fiber over a point in $\tau^o$ can be connected by a sequence of $1$ dimensional strata. This is clear since the fiber is a polyhedral complex.
\end{proof}

The previous two lemmas, Lemma \ref{cones} and Lemma \ref{cones2} imply the following corollary regarding the fibers of the universal morphism. We will formally introduce the notion of stable toric varieties in the following section, but choose to state this corollary here. Informally, stable toric varieties can be thought of as the union of toric varieties glued together along their toric boundaries.

\begin{cor} Fibers of  the universal morphism $\mathcal{U} \rightarrow [V \sslash_C H]$ are connected stable toric varieties in the sense of Alexeev (see Section \ref{4}). \end{cor}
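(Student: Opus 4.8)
The plan is to identify the fiber of $\calU \to \Chow$ over a distinguished point $e_\tau$ combinatorially and then match the resulting structure against Alexeev's definition of a stable toric variety. Working locally on $\Chow$, I would restrict to an affine chart $\Spec \bC[Q_\tau^\vee]$ and base change to the torus-fixed point $e_\tau$; since $\calU$ is covered by the charts $\Spec \bC[N_\sigma^\vee]$ for cones $\sigma \in F'$ lying over $\tau$, the fiber is covered by the corresponding fibered-product charts. The two monoid descriptions established above, $N_\sigma \cong Q_\tau \times \bN$ and $N_\sigma \cong Q_\tau \times_\bN \bN^2$, then compute these charts explicitly: in the first case one obtains an affine line over the relative torus, and in the second a node $\{xy = 0\}$ over the relative torus. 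Thus the codimension-one local geometry of the fiber is exactly that of smooth toric pieces glued transversally along toric boundary divisors.

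With this local picture in hand, I would assemble the global structure as follows. The generic components of the fiber are indexed by the cones $\tau' \in F'$ mapping isomorphically to $\tau$; each such $\tau'$ is a relative-dimension-zero section, and the star of $\tau'$ in $F'$ is the fan of a toric variety for the relative torus $T_L \cong H$, which is the corresponding component. By Lemma \ref{cones}, every cone $\sigma$ over $\tau$ of dimension $\dim\tau + 1$ has either one or two faces mapping isomorphically to $\tau$: the first case is a toric boundary divisor internal to a single component, while the second glues precisely two components along a common toric boundary divisor. Hence the components meet only along toric boundary divisors, and at most two components meet along any divisor, which is the combinatorial shape of a stable toric variety.

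Connectedness follows immediately from Lemma \ref{cones2}: any two components $\tau_1, \tau_2$ are comparable, i.e. joined by a chain of pairwise adjacent components, and adjacency means the two components share a codimension-one intersection. The dual graph of the fiber is therefore connected, and so is the fiber. Reducedness is not something I need to reprove, as it is exactly the content of the proposition above establishing that $\calU \to \Chow$ has reduced fibers. Seminormality, the remaining requirement in Alexeev's definition, is a local property and follows from the explicit local models: an affine line and the node $\{xy=0\}$, each crossed with a torus, are seminormal, and a reduced scheme glued from such charts is seminormal as well. Putting these together, a connected, reduced, seminormal scheme carrying a $T_L$-action with finitely many orbits, whose components are toric varieties for $T_L$ glued along toric boundary divisors, is precisely a connected stable toric variety in Alexeev's sense.

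The main obstacle I anticipate is bridging the gap between the relative-dimension-one analysis of Lemma \ref{cones} and the global claim: those lemmas control only the codimension-one strata, whereas Alexeev's definition is a statement about the whole variety. I would address this by arguing that the codimension-one data determines the rest, namely that the cells $\sigma \cap p^{-1}(w)$ for $w$ in the interior of $\tau$ form a polyhedral subdivision of the fiber polytope of the generic $H$-orbit closure, with maximal cells the components and walls the codimension-one strata, so that the toric-component structure and seminormality, being governed by the generic and codimension-one behavior of a reduced scheme, propagate to all of the fiber. Verifying that this polyhedral complex genuinely realizes Alexeev's polytopal reference data, and that reducedness together with the normal-crossing codimension-one structure forces seminormality in higher codimension, is the technical heart of the argument.
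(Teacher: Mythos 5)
Your proposal is correct and follows essentially the same route as the paper: the paper deduces this corollary directly from Lemma \ref{cones} (each relative-dimension-one cone, i.e.\ each toric divisor in the fiber, either sits inside a single generic component or joins exactly two of them), Lemma \ref{cones2} (comparability of the cones mapping isomorphically to $\tau$, giving connectedness), and the earlier proposition that $\calU \to \Chow$ has reduced fibers, with generic components of the fiber corresponding to cones of $F'$ mapping isomorphically to $\tau$. The extra detail you supply --- the explicit local models coming from $N_\sigma \cong Q_\tau \times \bN$ and $N_\sigma \cong Q_\tau \times_{\bN} \bN^2$, and the discussion of seminormality --- is more than the paper itself records, as its own justification is the one-line observation that the two lemmas imply the statement.
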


\subsection{Gromov-Witten Theory}

To draw the connection with Gromov-Witten theory, we must connect the geometry of $[V \sslash_C H]$ with that of $V$, rather than $\mathcal{U}$. 

To be precise, consider the following association between cones of $F'$ and cones of $F$. For a cone $\sigma' \in F'$, we denote by $\iota(\sigma')$ the unique cone $\sigma \in F$ such that $\sigma'^o \subset \sigma^o$. By construction of the universal family, the map $\iota$ has the following explicit description:
every cone $\sigma' \in F'$ is of the form $\sigma \cap p^{-1}(\tau)$ for a cone $\sigma \in F$ and $\tau \in G$. 

Then 
$\iota\big(\sigma \cap p^{-1}(\tau)\big)=\sigma$ and we can further define, in analogy with $N_0(\tau)$: 

\begin{defn}
Let 
\begin{align*}
N_k(\tau)=\left\{ \sigma \in F: \dim{\textup{span}\big(\sigma \cap p^{-1}(v)\big)}=k \right\} 
\end{align*}

\noindent for $v \in \tau^o$, $\tau \in G$, and define \begin{align*}
M_k(\tau) = \left\{ \sigma' \in F': \dim{\textup{span}\big(\sigma' \cap p^{-1}(v)\big)}=k \right\}  \\
= \left\{ \sigma' \in F': p(\sigma')=\tau, \dim{\sigma'}=\dim{\tau}+k \right\}.
\end{align*}
\end{defn} \begin{lemma}
Let $\tau$ be a cone in $G$. The map $\iota$ maps $M_0(\tau)$ into $N_0(\tau)$, and induces a bijection $M_0(\tau) \cong N_0(\tau)$.
\end{lemma}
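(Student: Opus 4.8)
The plan is to exhibit an explicit two-sided inverse to $\iota$, namely the assignment $\phi\colon\sigma\mapsto\sigma\cap p^{-1}(\tau)$, and to check that $\iota$ and $\phi$ are mutually inverse bijections $M_0(\tau)\cong N_0(\tau)$. The one structural fact I would use throughout is the carrier description of the common refinement $F'=F\wedge p^{-1}(G)$ underlying the universal family: for $\sigma'\in F'$ the cone $\iota(\sigma')$ is by definition the unique cone of $F$ with $\sigma'^{\circ}\subseteq\iota(\sigma')^{\circ}$, and likewise there is a unique cone $\tau\in G$ with $\sigma'^{\circ}\subseteq p^{-1}(\tau)^{\circ}=p^{-1}(\tau^{\circ})$; passing to closures of relative interiors yields the canonical presentation $\sigma'=\iota(\sigma')\cap p^{-1}(\tau)$, in which $\tau$ is the $G$-carrier of $p(\sigma')$. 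Throughout I fix a point $v\in\tau^{\circ}$.

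First I would verify that $\iota$ carries $M_0(\tau)$ into $N_0(\tau)$. If $\sigma'\in M_0(\tau)$ then $p|_{\sigma'}\colon\sigma'\to\tau$ is an isomorphism of cones, so $\sigma'\cap p^{-1}(v)$ is a single point $x_v\in\sigma'^{\circ}\subseteq\sigma^{\circ}$, where $\sigma=\iota(\sigma')$. Using $\sigma'=\sigma\cap p^{-1}(\tau)$ and $v\in\tau$, one has $\sigma\cap p^{-1}(v)=\sigma\cap p^{-1}(\tau)\cap p^{-1}(v)=\sigma'\cap p^{-1}(v)=\{x_v\}$, and since $x_v\in\sigma^{\circ}$ this realizes $\sigma^{\circ}\cap p^{-1}(v)$ as a single point, so $\sigma\in N_0(\tau)$. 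Injectivity is then formal from the canonical presentation: if $\iota(\sigma_1')=\iota(\sigma_2')=\sigma$ with $\sigma_i'\in M_0(\tau)$, then $p(\sigma_i')=\tau$ forces $\sigma_i'=\sigma\cap p^{-1}(\tau)$ for each $i$, hence $\sigma_1'=\sigma_2'$; equivalently $\phi\circ\iota=\mathrm{id}_{M_0(\tau)}$.

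Next I would run the construction backwards to get surjectivity. Given $\sigma\in N_0(\tau)$, set $\sigma'=\phi(\sigma)=\sigma\cap p^{-1}(\tau)$, which, being a nonempty intersection $\sigma\cap p^{-1}(\tau)$ (it contains $x_v$), is a cone of $F'$. Because $\sigma^{\circ}\cap p^{-1}(v)=\{x_v\}$ for every $v\in\tau^{\circ}$, the image $p(\sigma')$ contains $\tau^{\circ}$ and hence equals $\tau$, while the generic fibre of $p|_{\sigma'}$ is a point, so $\dim\sigma'=\dim\tau$ and $\sigma'\in M_0(\tau)$. Finally $\iota(\sigma')=\sigma$: the point $x_v$ has $F$-carrier $\sigma$ (as $x_v\in\sigma^{\circ}$) and $p^{-1}(G)$-carrier $p^{-1}(\tau)$ (as $p(x_v)=v\in\tau^{\circ}$), so $x_v$ lies in the relative interior of $\sigma\cap p^{-1}(\tau)=\sigma'$, giving $\sigma'^{\circ}\subseteq\sigma^{\circ}$ and thus $\iota(\sigma')=\sigma$, i.e. $\iota\circ\phi=\mathrm{id}_{N_0(\tau)}$.

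The main obstacle I anticipate is the relative-interior bookkeeping that makes $\iota\circ\phi=\mathrm{id}$ valid, namely guaranteeing that the unique intersection point $x_v$ lies in $\sigma^{\circ}$ rather than on a proper face of $\sigma$; if it could escape to the boundary, $\iota(\sigma\cap p^{-1}(\tau))$ would be a proper face of $\sigma$ and the correspondence would fail. This is exactly why $N_0(\tau)$ should be read through the open cone, as in the original definition (3.1) where the condition is imposed on $\sigma^{\circ}\cap(v+L)$; with that reading $x_v\in\sigma^{\circ}$ holds by definition, and the standard fact that $(\sigma\cap p^{-1}(v))^{\circ}=\sigma^{\circ}\cap p^{-1}(v)$ whenever the affine slice meets $\sigma^{\circ}$ confirms that the ``$\sigma$'' and ``$\sigma^{\circ}$'' versions of $N_0(\tau)$ coincide off the boundary. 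Once this is pinned down, the identities $\phi\circ\iota=\mathrm{id}$ and $\iota\circ\phi=\mathrm{id}$ assemble into the asserted bijection.
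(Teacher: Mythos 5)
Your proof is correct and takes essentially the same route as the paper's: both directions hinge on the presentation $\sigma' = \iota(\sigma')\cap p^{-1}(\tau)$, the forward inclusion is checked by tracking the unique fibre point through $\sigma'^{\circ}\subseteq\sigma^{\circ}$, and the inverse is the same assignment $\sigma\mapsto\sigma\cap p^{-1}(\tau)$. Your explicit verification that $\iota$ and $\phi$ are mutually inverse, and your observation that $N_0(\tau)$ must be read via relative interiors as in (3.1), only make precise what the paper's terser argument leaves implicit.
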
 

\begin{proof}
Let $\sigma' \in M_0(\tau)$ and let $\sigma = \iota(\sigma')$; take a vector $v \in \tau^o$. By assumption $p^{-1}(v) \cap \sigma'^o$ is a unique vector $w$. Therefore $w=p^{-1}(v) \cap p^{-1}(\tau) \cap \sigma^o = p^{-1}(v) \cap \sigma^o$ and thus $\sigma \in N_0(\tau)$. Conversely, given $\sigma \in N_0(\tau)$, it is clear that $\sigma'=\sigma \cap p^{-1}(\tau)$ is inside $\sigma$; hence the map $p$ maps $\sigma'$ injectively into $\tau$; on the other hand, it hits the interior of $\tau$, and hence, since $p$ maps cones of $F'$ onto cones of $G$, it must map $\sigma'$ to $\tau$ isomorphically, and $\sigma'$ is thus in $M_0(\tau)$.  
\end{proof}

In relative dimension $1$, the situation is only slightly more subtle: If $M_1(\tau)' \subset M_1(\tau)$ denotes the subset of cones $\sigma'$ that have precisely two faces mapping to $\tau$ isomorphically, as in Lemma \ref{cones},  we have the following:

\begin{lemma}
The map $\iota$ takes $M_1(\tau)'$ into $N_1(\tau)$. The image of a cone $\sigma'$ in $M_1(\tau)'$ is a cone $\sigma \in N_1(\tau)$ with precisely two faces in $N_0(\tau)$, namely the images of the two faces of $\sigma'$ in $M_0(\tau)$ under $\iota$. 
\end{lemma}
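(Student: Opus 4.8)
The plan is to establish the three assertions in sequence, in each case reducing to the bijection $\iota\colon M_0(\tau)\cong N_0(\tau)$ of the preceding lemma together with the explicit description $\sigma'=\sigma\cap p^{-1}(\tau)$, $\iota(\sigma\cap p^{-1}(\tau))=\sigma$ of the cones of $F'$. First I would verify $\iota(M_1(\tau)')\subseteq N_1(\tau)$: given $\sigma'\in M_1(\tau)'$ and $\sigma=\iota(\sigma')$, write $\sigma'=\sigma\cap p^{-1}(\tau)$. For $v\in\tau^{\circ}$ we have $p^{-1}(v)\subseteq p^{-1}(\tau)$, so $\sigma\cap p^{-1}(v)=\sigma'\cap p^{-1}(v)$, which is one-dimensional by hypothesis; hence $\sigma\in N_1(\tau)$. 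This step is immediate.

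Next, let $\tau_1',\tau_2'$ be the two faces of $\sigma'$ mapping isomorphically to $\tau$, which exist and are exactly two by Lemma \ref{cones} since $\sigma'\in M_1(\tau)'$. Each lies in $M_0(\tau)$ (it maps onto $\tau$ and has dimension $\dim\tau$), so $\sigma_i:=\iota(\tau_i')\in N_0(\tau)$, and $\sigma_1\neq\sigma_2$ by injectivity of $\iota$ on $M_0(\tau)$. The heart of the argument is to show each $\sigma_i$ is a face of $\sigma$, which I would do purely fan-theoretically: since $\tau_i'\subseteq\sigma'\subseteq\sigma$ and $\tau_i'\subseteq\sigma_i$ (because $\iota(\tau_i')=\sigma_i$ gives $(\tau_i')^{\circ}\subseteq\sigma_i^{\circ}$), we get $\tau_i'\subseteq\sigma_i\cap\sigma$, which is a common face of $\sigma_i$ and $\sigma$ as $F$ is a fan. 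Were $\sigma_i\cap\sigma$ a proper face of $\sigma_i$, it would be disjoint from $\sigma_i^{\circ}$, contradicting $\emptyset\neq(\tau_i')^{\circ}\subseteq\sigma_i^{\circ}$; hence $\sigma_i\cap\sigma=\sigma_i$, i.e.\ $\sigma_i$ is a face of $\sigma$.

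Finally, to see these are the only faces of $\sigma$ lying in $N_0(\tau)$, I would take any face $\rho$ of $\sigma$ with $\rho\in N_0(\tau)$, pick a supporting functional $\ell\geq 0$ on $\sigma$ with $\rho=\sigma\cap\{\ell=0\}$, and note that $\rho\cap p^{-1}(\tau)=\sigma'\cap\{\ell=0\}$ is a face of $\sigma'$; by the preceding lemma it maps isomorphically to $\tau$. As $\sigma'$ has exactly two such faces, $\rho\cap p^{-1}(\tau)\in\{\tau_1',\tau_2'\}$, whence $\rho=\iota(\rho\cap p^{-1}(\tau))\in\{\sigma_1,\sigma_2\}$. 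This closes the last assertion.

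I expect the only genuine obstacle to be the face-comparison in the second paragraph, namely upgrading ``$\tau_i'$ is a face of $\sigma'$'' to ``$\sigma_i=\iota(\tau_i')$ is a face of $\sigma$''. Attempting to track supporting hyperplanes across the refinement $F'\to F$ is awkward and tempting but messy; the clean resolution is to intersect the two cones $\sigma_i,\sigma$ of $F$ directly and exploit the elementary fact that, in a fan, the intersection of two cones is a common face, combined with the fact that this intersection meets the relative interior of $\sigma_i$.
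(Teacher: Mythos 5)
Your proof is correct, but there is nothing in the paper to compare it against: the authors state this lemma immediately after establishing the bijection $M_0(\tau)\cong N_0(\tau)$ and give no proof at all, evidently regarding it as a routine analogue. Your write-up supplies exactly the missing verification, and it does so using only the paper's own toolkit: the description $\sigma'=\sigma\cap p^{-1}(\tau)$ with $\sigma=\iota(\sigma')$, the preceding lemma, and the definition of $M_1(\tau)'$ via Lemma \ref{cones}. All three steps check out. (i) The set-theoretic identity $\sigma\cap p^{-1}(v)=\sigma'\cap p^{-1}(v)$ for $v\in\tau^{\circ}$ transfers the relative-dimension-one condition from $\sigma'$ to $\sigma$, and it does so robustly, independent of how one reads the paper's definition of $N_1(\tau)$. (ii) Your face comparison is the right move: since $\sigma_i=\iota(\tau_i')$ and $\sigma$ are both cones of $F$, their intersection is a common face, and it meets $\sigma_i^{\circ}$ because it contains $(\tau_i')^{\circ}\subseteq\sigma_i^{\circ}$; this is cleaner than, and avoids, any attempt to transport supporting hyperplanes through the refinement $F'\to F$, as you note. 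Distinctness of $\sigma_1,\sigma_2$ does indeed follow from injectivity of $\iota$ on $M_0(\tau)$. (iii) For exclusivity, cutting $\sigma'$ by a supporting functional of an arbitrary face $\rho\in N_0(\tau)$ of $\sigma$ gives the face $\rho\cap p^{-1}(\tau)=\sigma'\cap\{\ell=0\}$ of $\sigma'$, which by the preceding lemma lies in $M_0(\tau)$ and maps isomorphically to $\tau$, hence equals $\tau_1'$ or $\tau_2'$, and applying $\iota$ recovers $\rho$. The one point you use silently is that $\sigma'$ really equals $\sigma\cap p^{-1}(\tau)$ (rather than merely being contained in it); this is the paper's own asserted description of $\iota$, and it can be justified in a line by noting that $(\sigma')^{\circ}$ meets the relative interior of the cone $\sigma\cap p^{-1}(\tau)\in F'$, so the two cones of $F'$ coincide.
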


Finally, we also have 

\begin{theorem}
Let $\tau$ be a cone in $[V \sslash_C H]$ and let $\tau_0, \cdots \tau_n$ be the cones in $F$ that are inside $N_0(\tau)$. For a cone $\sigma \in \iota(M_1(\tau)')$, choose an ordering of the first and second face of $\sigma$ in $N_0(\tau)$. Denote by $i(\sigma)$ and $j(\sigma)$ the index of the first and second face of $\sigma$ in $N_0(\tau)$ in the list $\tau_0,\cdots,\tau_n$ respectively. Then the monoid $Q_\tau$ has the description 
\begin{align*}
Q_\tau = \left\{(v_0,\cdots,v_n,m_\sigma: v_{i(\sigma)} - v_{j(\sigma)} = m_\sigma u_\sigma) \right\} \subset \tau_0 \times \cdots \tau_n \times \prod_{M_1(\tau)'} \bZ,
\end{align*}
where $m_\sigma$ denotes the integer satisfying the equality: $v_{i(\sigma)} - v_{j(\sigma)} = m_{\sigma}u_{\sigma}$.
\end{theorem}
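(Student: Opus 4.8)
The plan is to construct an explicit monoid isomorphism from $Q_\tau$ onto the right-hand side and verify it is a bijection; the only substantive input beyond bookkeeping will be the connectivity statement of Lemma~\ref{cones2}. Recall that by definition $Q_\tau = \bigcap_{\sigma \in N_0(\tau)} p(\sigma \cap N)$, and that each cone $\tau_i \in N_0(\tau)$ maps isomorphically onto $\tau$, so $p$ restricts to an injection $\tau_i \cap N \hookrightarrow Q$. Consequently every $q \in Q_\tau$ admits, for each $i$, a \emph{unique} lift $v_i \in \tau_i \cap N$ with $p(v_i) = q$. For each edge $\sigma \in \iota(M_1(\tau)')$ the two lifts $v_{i(\sigma)}, v_{j(\sigma)}$ of $q$ have equal image under $p$, so their difference lies in $L = \ker p$; by Lemma~\ref{cones} and the remark following it this difference lies in the rank-one lattice $N \cap \bR u_\sigma = \bZ u_\sigma$, and I take $m_\sigma$ to be the unique integer with $v_{i(\sigma)} - v_{j(\sigma)} = m_\sigma u_\sigma$. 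This defines
\[ \Psi\colon Q_\tau \longrightarrow \Bigl\{ (v_0,\dots,v_n,(m_\sigma)) \ :\ v_{i(\sigma)}-v_{j(\sigma)}=m_\sigma u_\sigma \Bigr\}, \qquad q \mapsto (v_0,\dots,v_n,(m_\sigma)). \]
The map $\Psi$ is visibly additive and lands in the claimed subset, and it is injective because $q = p(v_0)$ is recovered from any single coordinate. Thus the entire content of the theorem is the surjectivity of $\Psi$.

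The crux is surjectivity, and this is exactly where the combinatorics of the fiber enters. Given a tuple $(v_0,\dots,v_n,(m_\sigma))$ in the right-hand side, each edge relation $v_{i(\sigma)}-v_{j(\sigma)} = m_\sigma u_\sigma$ forces $p(v_{i(\sigma)}) = p(v_{j(\sigma)})$, since $u_\sigma \in L$. A priori this only equates the images of lifts indexed by \emph{adjacent} components. To upgrade these local identities to the global statement $p(v_0) = \cdots = p(v_n)$ I would invoke Lemma~\ref{cones2}: the graph with vertex set $N_0(\tau) = \{\tau_0,\dots,\tau_n\}$ and edge set $\iota(M_1(\tau)')$ is connected, so for any $i,j$ a chain of adjacencies yields $p(v_i) = p(v_j)$. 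Setting $q := p(v_0)$ we then have $q \in p(\tau_i \cap N)$ for every $i$, witnessed by $v_i$, hence $q \in \bigcap_i p(\tau_i \cap N) = Q_\tau$; and since $v_i$ is the unique lift of $q$ in $\tau_i$, we recover $\Psi(q) = (v_0,\dots,v_n,(m_\sigma))$. As both sides are submonoids of their ambient products and $\Psi$ is an additive bijection, it is an isomorphism of monoids, which is the asserted description.

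Two ancillary points require care but are routine. First, the index data is well posed: by the lemma immediately preceding the statement, $\iota(M_1(\tau)')$ consists precisely of the cones of $N_1(\tau)$ with exactly two faces in $N_0(\tau)$, so the faces $\tau_{i(\sigma)}, \tau_{j(\sigma)}$ and the primitive direction $u_\sigma$ are determined once an ordering is fixed, and reversing the ordering merely changes the sign of $m_\sigma$. Second, that $v_{i(\sigma)}-v_{j(\sigma)}$ is an \emph{integer} multiple of $u_\sigma$ uses that $u_\sigma$ generates $N \cap \bR u_\sigma$ together with the fact that the $v_i$ are lattice points. The genuine mathematical obstacle, however, is the passage from the purely local edge relations to a single global class $q$: everything hinges on the connectivity of the dual graph of the fiber, namely Lemma~\ref{cones2}, which is precisely the tropical reflection of the fact that the fibers of $\calU \to \Chow$ are connected. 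Without connectivity the right-hand side could a priori be strictly larger than $Q_\tau$, so this is the step I expect to be decisive.
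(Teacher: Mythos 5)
Your overall strategy is the same as the paper's: both proofs set up the explicit correspondence $q \mapsto (v_0,\dots,v_n,(m_\sigma))$ via unique lifts, and recover $q$ from a tuple as the common image $p(v_0)=\cdots=p(v_n)$, with Lemma~\ref{cones2} (connectivity of the adjacency graph on $N_0(\tau)$) doing exactly the work you assign to it. The gap lies in your opening claim that ``each cone $\tau_i \in N_0(\tau)$ maps isomorphically onto $\tau$.'' This is false in general. Membership in $N_0(\tau)$ only says that $\tau_i^o$ meets the generic fiber $p^{-1}(v)$, $v \in \tau^o$, in a single point. That condition does force $p$ to be injective on $\tau_i$ (any kernel direction in $\mathrm{span}(\tau_i)$ would produce a positive-dimensional fiber through an interior point), so your unique lifts exist; but the image $p(\tau_i)$ may strictly contain $\tau$, because $G$ is a common refinement and $\tau$ can be a proper subcone of $p(\tau_i)$. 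Indeed, the lemma immediately preceding the theorem says precisely that it is the cone $\tau_i \cap p^{-1}(\tau)$ of the fan $F'$ of the universal family --- not $\tau_i$ itself --- that maps isomorphically onto $\tau$.

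Where this costs you is surjectivity. Your argument produces $q = p(v_0) = \cdots = p(v_n)$ and concludes $q \in \bigcap_i p(\tau_i \cap N)$, which is fine; but since the cones $p(\tau_i)$ can be strictly larger than $\tau$, this intersection is not visibly contained in $\tau$, and $Q_\tau$ --- as the monoid attached to the cone $\tau$ in the toric stack datum $(G, Q_\tau, Q)$ --- must be a submonoid of $\tau \cap Q$. So you still owe the statement $q \in \tau$, and your false isomorphism claim is what was silently standing in for it. The paper devotes a separate argument to exactly this point (``We first show that the image is in $\tau$''): taking the $v_i$ in the interiors of the $\tau_i$, one argues that the set $N(v_i) = \{\sigma : (v_i + L) \cap \sigma^o = pt\}$ equals $\{\tau_0,\dots,\tau_n\}$ for some fiber over $\tau^o$ and is constant as the fiber $v_i + L$ moves through the interiors of all the $\tau_i$; by the very definition of the cones of the quotient fan, this puts $p(v_i)$ in $\tau^o$. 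Without this step the right-hand side could a priori be strictly larger than $Q_\tau$. Everything else in your proposal --- additivity, injectivity, integrality of $m_\sigma$, the reduction of the global identification to adjacency plus connectivity --- is correct and matches the paper's proof.
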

\begin{proof}
Let us provisionally denote the monoid $$\left\{(v_0,\cdots,v_n,m_\sigma: v_{i(\sigma)} - v_{j(\sigma)} = m_\sigma u_\sigma) \right\}$$ by $S$, to save notation. A map $Q_\tau \rightarrow S$ is obtained by mapping a vector $v \in Q_\tau$ to the element $\big(v_0,\cdots,v_n,\frac{v_{i(\sigma)}-v_{j(\sigma)}}{u_\sigma}\big)$, where by $\frac{v_{i(\sigma)}-v_{j(\sigma)}}{u_\sigma}$ we mean the multiple $m_\sigma$ such that $v_{i(\sigma)}-v_{j(\sigma)}=m_\sigma u_\sigma$. That this is well defined is equivalent to saying that Definition \ref{defnreldim} is not a nonsensical definition, which follows from Lemma \ref{cones}.

 An inverse map is provided by mapping $(v_0,\cdots,v_n,m_\sigma)$ to the common image $$p(v_0)=\cdots=p(v_n) \in Q.$$ Note that the image of all the $v_i$ is indeed the same: the image of any two adjacent $v_i$ are the same, as then they are in the two faces of a cone in $N_1(\sigma)$, hence differ by an element $u_\sigma$ of $L$; and by Lemma \ref{cones2}, any two cones $\tau_i,\tau_j$ are comparable. The theorem will then follow if we can show that $p(v_0)=\cdots=p(v_n)$ is indeed in $Q_\tau$.

We first show that the image is in $\tau$. Assume that the vector $v_i$ is in the interior of $\tau_i$ for all $i$. To say that $p(v_i)$ is in the interior of $\tau$ is almost tautological: it means that the set $N(v_i)=\left\{\sigma: v_i + L \cap \sigma^o = pt \right\}$ is the collection $\tau_0, \cdots, \tau_n$. Certainly the collection is a subset of $N(v_i)$. On the other hand, this is true for some choice of $v_i$, namely any mapping into $\tau^o$, which exists by definition of $N_0(\tau)$; but the collection $N(v_i)$ is constant as long as $v_i+L$ varies in the interior of all $\tau_i$, which is as desired. To complete the proof we need to show that $p(v_i) \in Q_\tau$; this is true since $v_i \in N_{\tau_i}$ and $Q_\tau = \displaystyle\cap_{\tau_i} p(N_{\tau_i})$  
\end{proof}

\begin{remark} The above theorem shows that the dual cone of the basic monoid is the moduli space of tropical broken toric varieties of the given type. \end{remark}

Before stating the final corollary, we recall the definition of a \textit{minimal} (or basic) logarithmic structure.

\subsection{Minimal logarithmic structures}

\leavevmode\\

One of the major insights of \cite{gs} was the construction of a \textit{basic} or \textit{minimal} logarithmic structure on the fixed base. It is minimal in the following sense: suppose we have a logarithmic map $X \to S = \Spec(k)$ where $X$ is any  logarithmic scheme and $S$ is a point with an arbitrary logarithmic structure. Additionally, suppose that $S^{\textrm{min}}$ is the same point with the minimal logarithmic structure. Then minimality requires the following diagram to be cartesian:

\[
\begin{CD}
X @>>>X^{\textrm{min}}\\
@VVV @VVV \\
S @>>> S^{\textrm{min}}
\end{CD}
\]\\

That is, any logarithmic structure on $X \to S$ is obtained via pullback from the minimal one. In general, existence of these  minimal objects is equivalent to the existence of a logarithmic algebraic stack representing the moduli problem. For a more general approach to minimality see \cite{danny}.

\begin{cor}
Assume the dimension of the torus $H$ is $1$. Then the monoids $Q_\tau$ are minimal in the sense of Gross and Siebert.
\end{cor}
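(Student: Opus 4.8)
The plan is to connect the explicit description of the monoid $Q_\tau$ given in the theorem of subsection \ref{6} with the abstract minimality condition of Gross--Siebert recalled above, specializing to the case $\dim H = 1$. The key observation is that when $H$ is one-dimensional, the lattice $L$ is rank one, so every vector $u_\sigma$ appearing in Definition \ref{defnreldim} is an integer multiple of a single primitive generator $u$ of $L \cap N$. The theorem then presents $Q_\tau$ as a fiber product monoid, cut out inside $\tau_0 \times \cdots \times \tau_n \times \prod_{M_1(\tau)'}\bZ$ by the relations $v_{i(\sigma)} - v_{j(\sigma)} = m_\sigma u_\sigma$. My aim is to recognize this presentation as exactly the universal (initial) solution to the system of gluing constraints imposed on the base by a logarithmic map whose fibers are the stable toric varieties of the given combinatorial type.

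First I would unwind what minimality requires concretely. Following the cartesian-diagram characterization recalled above and the combinatorial translation in \cite{gs} and \cite{danny}, minimality of the base monoid is equivalent to a universal property at the level of dual monoids (tropicalizations): the dual $Q_\tau^\vee$ should be the moduli space of tropical broken toric varieties of the fixed type, which is precisely the content of the Remark immediately preceding this corollary. So the first step is to invoke that Remark together with the theorem describing $Q_\tau$, reducing the claim to: the monoid $Q_\tau$ with the stated presentation is the initial monoid through which every such tropical/logarithmic family factors. Second, I would phrase the data of a logarithmic map from a family of these stable toric varieties as a collection of lengths assigned to the edges of the fiber polyhedral complex, subject to the balancing/closedness relations around each generic component; in relative dimension one these relations are exactly the constraints $v_{i(\sigma)} - v_{j(\sigma)} = m_\sigma u_\sigma$ coming from Lemma \ref{cones} and Lemma \ref{cones2}. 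Third, I would verify that an arbitrary logarithmic structure on such a family corresponds to a monoid homomorphism out of $Q_\tau$ satisfying these relations, and conversely that $Q_\tau$ as presented is generated by and only by these relations, which yields the required factorization making the diagram cartesian.

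The main obstacle I anticipate is not the construction of the comparison map but the verification that the presentation in the theorem is \emph{minimal} rather than merely \emph{a} solution: one must check that no spurious generators or relations have been introduced, i.e. that the fiber product structure $Q_\tau = \{(v_0,\ldots,v_n,m_\sigma) : v_{i(\sigma)} - v_{j(\sigma)} = m_\sigma u_\sigma\}$ is saturated and that its associated group is exactly the group generated by the tropical data with no extra torsion or rank. The hypothesis $\dim H = 1$ is essential precisely here: it guarantees each constraint is one-dimensional (a single integer $m_\sigma$ per divisorial cone $\sigma$), so that the combinatorics of Lemma \ref{cones}—every toric divisor in a fiber is either isolated or the intersection of exactly two components—matches the Gross--Siebert local model at a node, where the smoothing parameter lies in $\bN$ and the base chart is the fiber product $Q_\tau \times_{\bN} \bN^2$ of the preceding lemma. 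In higher dimensions the vectors $u_\sigma$ need not be proportional and the constraints become genuinely multidimensional, which is exactly why the corollary is stated only for $\dim H = 1$.

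Concretely, the verification proceeds as follows: I would take the local description $N_\sigma \cong Q_\tau \times_\bN \bN^2$ of the universal family from the lemma preceding Definition \ref{defnreldim}, dualize, and identify the resulting chart on the base with the standard minimal log smooth deformation of a node. Matching this chart-by-chart across all cones $\sigma \in M_1(\tau)'$ and gluing via the comparability statement of Lemma \ref{cones2} then produces the global monoid $Q_\tau$, and the universal property of the Chow quotient stack (Theorem \ref{univ}) furnishes the factorization through the minimal object, completing the identification with the Gross--Siebert basic monoid.
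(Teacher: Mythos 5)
Your proposal is, in substance, the paper's own argument: the paper gives no separate proof of this corollary at all, treating it as immediate from the preceding Theorem (the presentation of $Q_\tau$ by vertex positions $v_i$ and edge parameters $m_\sigma$) together with the Remark that this exhibits the dual cone of the basic monoid as the moduli space of tropical broken toric varieties, which for $\dim H = 1$ is exactly Gross--Siebert's characterization (Remark 1.21 of \cite{gs}) of their basic monoid. Your matching of the constraints $v_{i(\sigma)} - v_{j(\sigma)} = m_\sigma u_\sigma$ with tropical balancing data, and of the local charts $Q_\tau \times_\bN \bN^2$ with the standard smoothing of a node, is that same argument spelled out in more detail.

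Two corrections are needed, though neither breaks the proof of the stated case. First, you have the duality backwards: in the paper's toric-stack conventions $Q_\tau$ is the fan-side monoid and the logarithmic chart is its dual $Q_\tau^\vee$; the Theorem identifies $Q_\tau$ itself (not $Q_\tau^\vee$) with the tropical moduli cone, so the Gross--Siebert basic monoid is $Q_\tau^\vee$, and your step ``an arbitrary logarithmic structure corresponds to a monoid homomorphism out of $Q_\tau$'' should be a homomorphism out of $Q_\tau^\vee$. As written, your reading would force $Q_\tau$ to be self-dual, which contradicts the very Theorem you invoke. Second, your explanation of why $\dim H = 1$ is essential is not correct: by Lemma \ref{cones}, every cone $\sigma \in M_1(\tau)'$ satisfies $\dim\big(\textup{span}(\sigma) \cap L\big) = 1$ regardless of the rank of $L$, so the Theorem's presentation --- one integer $m_\sigma$ per such cone --- holds for $H$ of arbitrary dimension; the constraints never become ``multidimensional.'' The hypothesis $\dim H = 1$ enters only because the fibers of $\calU \to \Chow$ are then nodal curves, which is the only setting in which ``minimal in the sense of Gross and Siebert'' is literally defined; in higher dimension one would have to appeal to a more general notion of minimality, such as Gillam's \cite{danny}.
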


We will now discuss the other side of the story: the moduli space of stable toric varieties.

\section{Moduli Space of Stable Toric Varieties} \label{4}
First we will define the main objects of the moduli space in question-- stable toric varieties. See \cite{ab} and \cite{ale} for both details as well as proofs.
\subsection{Stable Toric Varieties}
\begin{defn}
A polarized toric variety is a pair $(X,L)$ of a normal projective toric variety $X$ (with torus $T$) and an ample line bundle $L$ on $X$. 
\end{defn}

Recall that every line bundle on a toric variety is linearizable, i.e. the $T$-action on $X$ lifts to a $T$-action on any line bundle $L$ on $X$. Furthermore, recall that there is a one-to-one correspondence between polarized toric varieties $(X,L)$ with linearized line bundle $L$ and integral polytopes $P$ with vertices in the dual lattice $M$. Here, one has $\dim X = \dim P$.

 \begin{defn}A variety $X$ is \textit{seminormal} if any finite morphism $f: X' \to X$ which is a bijection is an isomorphism. \end{defn}
 
 For example, a curve is seminormal if and only if it is locally biholomorphic to the union of $n$ coordinate axes in $\bA^n$.

\begin{defn} A polarized stable toric variety is a pair $(X,L)$ of a projective variety with a linearized ample line bundle such that:
\begin{enumerate}
\item $X$ is seminormal
\item $(X_i, L_i = L|_{X_i})$ are polarized toric varieties.
\end{enumerate}
\end{defn}

The connection between toric varieties and combinatorics yields a lattice polytope for each irreducible component $(X_i, L_i)$. The stable toric varieties can be thought of as a seminormal union of toric varieties glued along $T$-invariant subspaces. Combinatorially, this is equivalent to gluing polytopes along faces. 

\begin{figure}[h!]
\centering
\includegraphics[scale=.5]{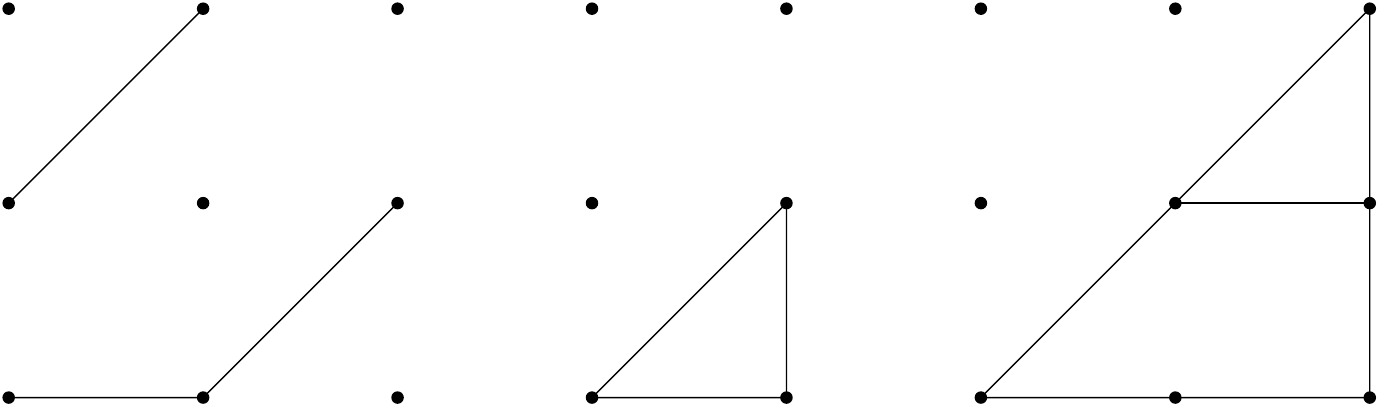}
\caption{Polytopes from left to right: $\bP^1$, $\bP^1 \cup \bP^1$, $\bP^2$, $\bP^2 \cup F_1$}
\end{figure}

\begin{defn} The topological type of a stable toric variety is the topological space $|P| = \cup P_i$, a union of polytopes glued according to $X = \cup X_i$, together with a finite map $\rho: |P| \to M_{\bR}$ such that $\rho|_{P_i}: P_i \to M_{\bR}$ are the embeddings of the lattice polytopes corresponding to $(X_i, L_i)$. \end{defn}

 In addition to considering polarized stable toric varieties, we can define stable toric varieties over projective space, or more generally a projective variety. 

\begin{defn}Let $\bP^n$ be a projective space together with a $T$-linearized sheaf $\calO(1)$, where $T$ is the maximal torus of $\bP^n$. A stable toric variety over $\bP^n$ is a stable toric variety $X$ with a finite morphism $f: X \to \bP^n$ and an isomorphism $L \cong f^*\calO(1)$ of $T$-linearized ample sheaves.  \end{defn}

More generally, one can consider a $T$-invariant subvariety $V \subset \bP^n$ with the sheaf $\calO_V(1) = \calO_{\bP^n}(1)|_V$. This set up naturally allows us to talk about maps (and later logarithmic maps) of stable toric varieties.
 
\begin{theorem} \cite[4.1.1, 4.1.2]{ab} There exists a projective scheme $\Al$ which is a coarse moduli space of stable toric varieties over $V$ with topological type $|Q|$. Furthermore, families of stable toric varieties over $V$ of topological type $|Q|$ form a proper algebraic stack of finite type.  \end{theorem}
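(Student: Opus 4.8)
The plan is to realize $\Al$ as a quotient of a suitable parameter scheme by a group action, and to establish properness via the valuative criterion. First I would exploit the fact that fixing the topological type $|Q|$ rigidifies the numerical data: each irreducible component $(X_i, L_i)$ corresponds to a lattice polytope $P_i$, and the gluing recorded by $\rho \colon |Q| \to M_{\bR}$ fixes all these polytopes together with their face identifications. Since $f \colon X \to V$ is finite with $L \cong f^* \calO_V(1)$ ample, the Hilbert polynomial of $(X, L)$ is determined by $|Q|$; equivalently, the normalized volumes of the $P_i$ are fixed. This yields boundedness of the collection of stable toric varieties over $V$ of type $|Q|$.

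Because every object is $T$-equivariant and polarized, I would next parametrize these objects inside a torus-equivariant (multigraded) Hilbert scheme. The linearization $L$ embeds $X$ equivariantly into a projective space, and, after passing to the graph of $f$, one lands inside a product with $V$; the $T$-invariant closed subschemes with the prescribed multigraded Hilbert function form a projective scheme $\mathcal{H}$, since the multigraded Hilbert scheme of Haiman--Sturmfels is projective. I would then isolate the locally closed subscheme $\mathcal{H}^{\circ} \subset \mathcal{H}$ of points whose underlying subscheme is a stable toric variety over $V$ of the correct type: seminormality and the requirement that each component be toric are locally closed conditions, as is finiteness of $f$.

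The moduli stack is then the quotient stack $[\mathcal{H}^{\circ} / G]$, where $G$ is the group of equivariant automorphisms of the ambient projective embedding. Since $G$ is affine of finite type and acts on the finite-type scheme $\mathcal{H}^{\circ}$, the quotient is an algebraic stack of finite type, proving the second assertion. For the first, automorphism groups of stable toric varieties are extensions of finite groups by tori, hence linearly reductive, so by Keel--Mori the stack admits a coarse moduli space; the GIT linearization arising from $L$ exhibits this coarse space as a projective scheme, which is $\Al$.

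The main obstacle is properness, which I would verify through the valuative criterion over a discrete valuation ring $R$ with fraction field $K$. Given a stable toric variety $X_K$ over $V$ of type $|Q|$, the task is to produce a unique limit $X_0$ over the closed point. This reduces to a combinatorial statement about one-parameter degenerations of toric varieties: such a degeneration corresponds to a regular polyhedral subdivision of the polytopes $P_i$, governed by the secondary fan, and the stable (seminormal, component-wise toric) condition selects a canonical such subdivision. Existence of the limit is the semistable reduction theorem for toric degenerations, while uniqueness follows because seminormality forces the limit to be reduced and pins down the polytopal data exactly. Carrying out this combinatorial-to-geometric translation, and checking that the resulting limit is again stable of the prescribed type $|Q|$, is the technical heart of the argument.
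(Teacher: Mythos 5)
This statement is not proved in the paper at all: it is quoted verbatim from Alexeev--Brion (Theorems 4.1.1 and 4.1.2 of \cite{ab}), so the only meaningful benchmark is the proof in that source. Measured against it, your architecture is essentially the right one and is in fact the route Alexeev--Brion take: boundedness pinned down by the type $|Q|$ (which fixes the multigraded Hilbert function, not just the Hilbert polynomial), parametrization by the Haiman--Sturmfels multigraded Hilbert scheme using the $T$-linearization, a quotient by the ambient equivariant automorphism group, and properness by a valuative criterion whose combinatorial content is a polyhedral subdivision analysis. So as a plan, the proposal is faithful to how the theorem is actually established.

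Two steps, however, are genuinely wrong or unsupported as written. First, the stabilizers: you assert that automorphism groups of the objects are ``extensions of finite groups by tori, hence linearly reductive, so by Keel--Mori the stack admits a coarse moduli space.'' Keel--Mori requires \emph{finite} inertia, not merely linearly reductive stabilizers; if the objects really had torus factors in their automorphism groups, the stack would fail to be separated, would only admit a good moduli space, and the properness assertion of the theorem would be false. What rescues the statement is the stability condition itself: because $f\colon X \to V$ is finite and $T$-equivariant, any automorphism commuting with $f$ and the torus action lies in the (finite) kernel of the induced map of tori, extended by a finite group of component symmetries, so automorphisms are finite --- this is exactly Remark 4.1.2(3) of \cite{ab}, which the present paper invokes later when proving quasi-finiteness of the diagonal of $\kg$. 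Your proposal needs this finiteness, and it does not follow from the reasoning you give. Second, the uniqueness half of the valuative criterion is misattributed: it is not true that ``the stable (seminormal, component-wise toric) condition selects a canonical subdivision'' --- many distinct regular subdivisions of $|Q|$ produce limits that are all seminormal with toric components. The subdivision is instead dictated by the valuation data of the given family over $K$, and uniqueness of the limit comes from flatness together with the fixed multigraded Hilbert function and separatedness of the (projective) multigraded Hilbert scheme; the substantive work in \cite{ab} (and in Alexeev's \cite{ale}) is showing that this flat limit is again a stable toric variety over $V$ of type $|Q|$. Relatedly, projectivity of the coarse space does not simply fall out of ``the GIT linearization arising from $L$''; it requires a separate argument in the source.
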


\begin{remark} (\cite{icm}) The moduli space $\Al$ contains an open subset which is the moduli space of toric varieties over $V$, and the closure of this open subset is an irreducible component of $\Al$.  In general for some polytopes, Alexeev's moduli space will have several irreducible components. For example, this will occur if one takes a non-convex subdivison of $P$. In what follows, we will refer to $\Alm$ as the main component. As alluded to in the introduction, the logarithmic structure that we add will carve out the main component. \end{remark}

We recall the following:

\begin{lemma}[Lemma 5.3.5 \cite{ab}] The variety $V \sslash_C H$ is the normalization of $\Alm$. \end{lemma}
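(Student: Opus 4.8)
The plan is to exhibit a finite birational morphism $\phi \colon V \sslash_C H \to \Alm$ out of a normal variety and then invoke the universal property of normalization: a finite birational morphism from a normal variety to a variety is (canonically) the normalization of the target. Since the Chow quotient is normal by our standing convention, this will identify $V \sslash_C H$ with the normalization of $\Alm$.

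First I would construct $\phi$ from the universal family. By the Corollary of Section~\ref{6}, the universal morphism $\calU \to \Chow$ is a family of connected stable toric varieties carrying an $H$-equivariant map to $V$, all of a single topological type $|Q|$ determined by the fan of $V$ and the polarization. Pulling this family back along a smooth cover of $\Chow$ by a scheme and classifying it by the proper algebraic stack of stable toric varieties over $V$ of type $|Q|$ (Theorem~[ab]~4.1.1--4.1.2) yields a morphism of stacks $\Chow \to \Al$; since $\Al$ has a coarse space and any morphism from $\Chow$ to a scheme factors through its coarse space $V \sslash_C H$, this descends to the desired $\phi \colon V \sslash_C H \to \Al$. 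Over the dense open torus $T/H \subset V \sslash_C H$ the fibers of $\calU$ are single orbit closures $\overline{H \cdot x}$, i.e. honest irreducible toric varieties over $V$, and these are precisely the points of the open locus parametrizing irreducible toric varieties over $V$ whose closure is $\Alm$. Thus $\phi$ restricts to an isomorphism from $T/H$ onto that open locus; as $V \sslash_C H$ is irreducible, continuity then forces the image of $\phi$ to lie in $\Alm$, and $\phi$ is birational onto $\Alm$.

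It remains to show $\phi$ is finite. Properness is immediate: $V \sslash_C H$ is a complete toric variety and $\Al$ is separated. For quasi-finiteness I would produce an explicit left inverse using the fundamental cycle. Sending a stable toric variety over $V$ to the pushforward of its fundamental cycle defines a morphism $c \colon \Alm \to \mathrm{Chow}(V)$, and by the very construction of $V \sslash_C H$ as the closure inside $\mathrm{Chow}(V)$ of the family of $H$-orbit cycles, the image of $c$ is exactly $V \sslash_C H \hookrightarrow \mathrm{Chow}(V)$. The fiber of $\calU$ over a point of $V \sslash_C H$ has fundamental cycle equal to the cycle $E_\kappa$ that the point represents (Section~3.2), so $c \circ \phi = \mathrm{id}$. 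Hence $\phi$ is a section of $c$, in particular injective, so it is quasi-finite; being proper and quasi-finite, it is finite. Finiteness together with birationality and the normality of $V \sslash_C H$ then gives the claim.

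The hard part will be the finiteness step, and more precisely the verification that distinct points of the Chow quotient give non-isomorphic stable toric varieties over $V$. I have reduced this to the assertion that the fundamental-cycle map $c$ is a genuine left inverse of $\phi$, which in turn rests on two inputs that must be made precise: that the fibers of the universal family $\calU$ recover the cycles $E_\kappa$ attached to the distinguished points (Section~3.2), and that $V \sslash_C H$ is honestly embedded in $\mathrm{Chow}(V)$ as the closure of $T/H$. Care is also needed in checking that $\phi$ lands in the \emph{main} component rather than in some other component of $\Al$, which is handled by the density of the irreducible locus together with the irreducibility of $V \sslash_C H$.
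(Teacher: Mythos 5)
The paper offers no proof of this statement at all: it is recalled verbatim as Lemma 5.3.5 of \cite{ab}, so there is no internal argument to compare yours against. Your proposal is therefore an independent proof of the cited input, and its skeleton --- build a map $V \sslash_C H \to \Alm$ out of the universal family, check birationality over the dense torus locus and quasi-finiteness, then use normality of the source to conclude it is the normalization --- is essentially the same skeleton the paper itself uses for the \emph{stacky} analogue in its final theorem (there: the forgetful map $\kg \to \Al$ is quasi-finite because $\kg \cong \Chow$, birational over the locus of irreducible toric varieties, representable, with $\kg$ normal, and Zariski's Main Theorem finishes). So the route is legitimate and consonant with the paper's methods; what it buys is an actual argument where the paper has only a citation.

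Two steps in your finiteness argument need repair, though both are fixable. First, your cycle map $c\colon \Alm \to \mathrm{Chow}(V)$ is problematic as a \emph{morphism}: assigning to a family its fiberwise pushforward cycle is only functorial over seminormal bases (Koll\'ar), and $\Alm$ is not known to be seminormal --- if it were normal the lemma would be empty. But you do not need $c$ to be a morphism: quasi-finiteness of $\phi$ only needs set-theoretic injectivity. Second, the input you invoke --- that the fundamental cycle of the fiber $\calU_z$ pushes forward to the cycle $z$ \emph{with the correct multiplicities} --- is proved nowhere in this paper; Section 3.2 merely \emph{defines} the cycles $E_\kappa = \sum_{\sigma} c(\sigma,L)\,\overline{He_\sigma}$, and the bijection $M_0(\tau) \cong N_0(\tau)$ from the paper's study of the universal family identifies only the \emph{supports} (components of the fiber over $e_\tau$ correspond to the orbit closures $\overline{He_\sigma}$, $\sigma \in \calN_0(\tau)$). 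The clean fix is to weaken the claim to supports: if $\phi(z_1)=\phi(z_2)$, the two fibers are isomorphic over $V$, hence have equal images in $V$, hence the cycles $z_1,z_2$ have equal supports; and since the multiplicity with which a component $t\cdot\overline{He_\sigma}$ occurs in any cycle parametrized by $V\sslash_C H$ is the lattice index $c(\sigma,L)$, which depends only on the stratum of $V$ containing its dense $H$-orbit and not on the point of the Chow quotient, equal supports already force equal cycles, i.e. $z_1 = z_2$. With these two repairs (and the routine verification that the fibers of $\calU\to\Chow$ have constant topological type $|Q|$, so that the classifying map to $\Al$ exists), your proof goes through.
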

\begin{remark} In fact, this normalization map above extends to a map on universal families:

\begin{center}
$\begin{CD} 
U @>>> X\\
@VVV @VVV\\
V \sslash_C H @>>> \Alm
\end{CD}$
\end{center}

\end{remark}

In the following section we describe the logarithmic structure we endow this moduli space with.

\subsection{Logarithmic Structure}

We begin by defining logarithmic stable maps of toric varieties prior to constructing the logarithmic stack that parametrizes them. For the remainder of this paper, unless otherwise stated, the notation $X$ will refer to a scheme with a logarithmic structure and $\underline{X}$ will denote the scheme underlying a logarithmic scheme.

\begin{defn} A logarithmic stable map $f$ (of toric varieties) to a fixed projective toric variety $V$ over a logarithmic scheme $S$ is a diagram of logarithmic schemes: 

\begin{center}
$\begin{CD}
X @>f>>V\\
@VVV  \\
S  
\end{CD}$\\

\end{center}

such that $\underline{X} \to \underline{S}$ is a stable toric variety. We require that the map $X \to S$ is flat and logarithmically smooth with fibers reduced stable toric varieties. This is equivalent to an integral and saturated morphism in the category of logarithmic schemes.
\end{defn}

Suppose we have two stable logarithmic maps of toric varieties, $X \to S$ and $Y \to S'$, both with logarithmic maps to $V$ denoted by $f$ and $g$ respectively. Then a morphism of logarithmic stable maps is defined with the following commutative diagram:

\begin{center}
\begin{tikzcd}
X \ar{r}\ar[bend left=40]{rr}{f}\ar{d}&Y \ar{r}{g}\ar{d} & V\\
 S \ar{r}& S'

\end{tikzcd}
\end{center}

\begin{defn} Fix an integral polytope $Q$ and a projective toric variety $V$. Let $\kg$ denote the category fibered in groupoids over $\mathfrak{Logsch}^{fs}$ of stable toric varieties with discrete data defined by $\Gamma = (T, V, Q)$.\end{defn}

 Here, $\mathfrak{Logsch}^{fs}$ denotes the category of fine and saturated logarithmic schemes. More explicitly, it takes a base scheme $S$ with a fine and saturated logarithmic structure $M_S$ and associates to it the set of all families of $T$-equivariant maps of stable toric varieties over $S$ with topological type $Q$ to a fixed target $V$ that are flat and logarithmically smooth with reduced fibers.

We now wish to show that $\kg$ is a logarithmic algebraic stack. This follows from the main theorem of the recent work of Wise \cite{wise}, where the author constructs logarithmic Hom spaces of logarithmic schemes. We summarize the results in the following two statements. 

\begin{theorem}(\cite{wise} Theorem 1.1) \label{wise}
Let $\pi: X \to S$ be a proper, flat, geometrically reduced integral morphism of fine logarithmic algebraic spaces. Let $V$ be a (not necessarily algebraic) logarithmic stack over $S$. Then the morphism

$$\Hom_{\textrm{Logsch}^{fs}_S}(X,V) \to \Hom_{\textrm{LogSch}^{fs}_S} (\underline{X}, \underline{V})$$

is representable by logarithmic algebraic spaces. 
\end{theorem}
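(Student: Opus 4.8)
The plan is to exploit the fact that, since the underlying morphism has been fixed, the relative functor in question parametrizes only the \emph{logarithmic enrichment} of a given morphism of schemes, and that this enrichment is data living entirely on $X$ (and its pushforward to $S$) — the algebraicity of the target $V$ never enters. Concretely, a point of the fiber over a morphism $\underline{f}\colon \underline{X} \to \underline{V}$ is a morphism of fine saturated logarithmic structures $\underline{f}^*\calM_V \to \calM_X$ over $\calO_X$ compatible with $\underline{f}$. So I would first reformulate the assertion as follows: the functor sending a logarithmic scheme $T$ over $\Hom_{\textrm{LogSch}^{fs}_S}(\underline{X},\underline{V})$ to the set of such morphisms of logarithmic structures on $X_T$ is representable by a logarithmic algebraic space over $S$, once pushed forward along the proper morphism $\pi$. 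This reformulation already explains why $V$ may be non-algebraic: the obstruction theory of the lifting problem is intrinsic to $X \to S$.

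Next I would separate the problem into a combinatorial part and a continuous part using the characteristic exact sequence $1 \to \calO_X^* \to \calM_X \to \overline{\calM}_X \to 0$ (and likewise for $\underline{f}^*\calM_V$). A morphism of logarithmic structures induces a morphism of characteristic (ghost) sheaves $\overline{\underline{f}^*\calM_V} \to \overline{\calM}_X$, and these ghost sheaves are constructible sheaves of fine saturated monoids. The assignment of a ghost morphism is étale-locally constant, so the corresponding functor is representable by a disjoint union of locally closed loci; pushing forward along the proper $\pi$ keeps this of locally finite type. This is where properness of $\pi$ is essential, so that the constructible combinatorial data on the fibers is controlled over the base.

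Over a fixed morphism of ghost sheaves, the set of lifts to an honest morphism $\underline{f}^*\calM_V \to \calM_X$ forms a pseudo-torsor under $\mathcal{H}om\big((\overline{\underline{f}^*\calM_V})^{\mathrm{gp}}, \calO_X^*\big)$, i.e. is governed by maps to the unit sheaf $\calO_X^*$. Because $\pi$ is proper, flat, and geometrically reduced, pushing this $\calO_X^*$-torsor datum forward along $\pi$ is representable by a group space of Picard/Weil-restriction type (here geometric reducedness ensures that the pushforward of the unit sheaf is controlled by a smooth group space, so the torsors are parametrized by a well-behaved object). Assembling the continuous part over each combinatorial stratum produces the desired logarithmic algebraic space.

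Finally I would verify algebraicity through a logarithmic version of Artin's criteria: local finite presentation (from finite presentation and properness of $\pi$), a deformation–obstruction theory governed by the logarithmic cotangent complex of $X/S$ rather than of $V$, and effectivity of formal deformations via Grothendieck existence along the proper $\pi$. The valuative criterion is where the hypotheses that $\pi$ be \emph{integral} and that we work with \emph{saturated} logarithmic structures are used, guaranteeing that limiting morphisms of logarithmic structures exist and remain fine and saturated. I expect the main obstacle to be exactly this last assembly: controlling the logarithmic deformation theory and checking the logarithmic Artin criterion uniformly, while keeping every estimate independent of the possibly non-algebraic $V$ — in effect proving that the log-lifting problem is always representable by showing that its obstruction theory is intrinsic to $X \to S$.
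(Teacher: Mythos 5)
First, a point of order: the paper does not prove this statement at all. It is Theorem 1.1 of Wise \cite{wise}, quoted verbatim as an external input (together with its Corollary 1.1.1) and used as a black box to conclude that $\kg$ is a logarithmic algebraic stack. So there is no internal proof to compare your attempt against; the only meaningful benchmark is Wise's own argument in the cited paper.

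Measured against that benchmark, your outline has the right skeleton and spends each hypothesis in the right place: a point of the fiber over $\underline{f}$ is a morphism of log structures $\underline{f}^*\calM_V \to \calM_X$; the exact sequence $1 \to \calO_X^* \to \calM_X^{\mathrm{gp}} \to \overline{\calM}_X^{\mathrm{gp}} \to 0$ splits the problem into a combinatorial (ghost-sheaf) part and a continuous part; lifts over a fixed ghost morphism form a pseudo-torsor under $\mathcal{H}om\bigl((\overline{\underline{f}^*\calM_V})^{\mathrm{gp}}, \calO_X^*\bigr)$; and properness, flatness, and geometric reducedness of $\pi$ give cohomological flatness in dimension zero, which is exactly what makes the pushforward of the $\calO^*$-part representable by a smooth group algebraic space. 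You are also right that this decomposition is the reason no algebraicity of $V$ is needed. Two caveats, however. First, your closing step diverges from the actual proof: Wise does not finish with a logarithmic Artin-type existence theorem; the moduli space is assembled \emph{directly} from pieces already proved representable (the ghost-morphism functor, which is of an \'etale/quasi-finite nature over the base, and the Picard-type group spaces), precisely because running an abstract criterion with a deformation--obstruction theory is exactly what one cannot control when $V$ is not algebraic --- the situation the theorem is designed to handle. Second, the two genuinely hard steps --- representability of the combinatorial stratum, and the bookkeeping where integrality of $\pi$ and saturatedness are actually used (minimality and fs fiber products) --- are asserted in your sketch rather than argued; those constitute the technical heart of \cite{wise}. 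So: correct decomposition and correct accounting of hypotheses, but as written this is an outline whose decisive steps are deferred, and whose final assembly points in a direction the cited proof deliberately avoids.
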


\begin{cor}[\cite{wise} Cor 1.1.1]
As in the previous theorem, if additionally $V \to S$ is locally of finite presentation with quasi-compact, quasi-separated diagonal and affine stabilizers, and if $X \to S$ is of finite presentation, then $\Hom_{\textrm{LogSch}^{fs}_S}(X,V)$ is represented by a logarithmic algebraic stack. \end{cor}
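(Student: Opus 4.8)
The plan is to obtain the corollary by combining Theorem \ref{wise} with the classical representability of the underlying, non-logarithmic Hom stack. Theorem \ref{wise} provides the forgetful morphism
$$\Phi \colon \Hom_{\textrm{LogSch}^{fs}_S}(X,V) \longrightarrow \Hom_{\textrm{LogSch}^{fs}_S}(\underline{X}, \underline{V})$$
and guarantees that it is representable by logarithmic algebraic spaces. Consequently, it suffices to show that the target is an algebraic stack: a category fibred in groupoids that is representable by algebraic spaces over an algebraic stack is again algebraic, and the logarithmic structures on the representing objects then equip the source with the structure of a logarithmic algebraic stack in the sense of \cite{shentu}.

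First I would identify the target with the ordinary $S$-Hom stack of morphisms from the underlying algebraic space $\underline{X}$ to the underlying algebraic stack $\underline{V}$. The supplementary hypotheses of the corollary are arranged precisely to meet the standing assumptions of the representability theorems for such Hom stacks: the source $\underline{X} \to S$ is proper, flat, and of finite presentation, while the target $\underline{V} \to S$ is locally of finite presentation with quasi-compact, quasi-separated diagonal and affine stabilizers. Under exactly these conditions, by the representability results of Olsson, Aoki, and Hall--Rydh, the stack $\Hom_S(\underline{X}, \underline{V})$ is an algebraic stack, locally of finite presentation over $S$.

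With the target shown to be algebraic, I would finish by transporting algebraicity and the logarithmic structure along $\Phi$. Pulling back a smooth presentation of $\Hom_S(\underline{X}, \underline{V})$ and composing with the representable morphism $\Phi$ produces a smooth presentation of $\Hom_{\textrm{LogSch}^{fs}_S}(X,V)$, so the latter is an algebraic stack; the fine and saturated logarithmic structures carried by the fibres of $\Phi$ descend to a global logarithmic structure, yielding a logarithmic algebraic stack.

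The main obstacle is the representability input for the underlying Hom stack. One must verify that the properness and flatness of $\underline{X}/S$ supply the boundedness and deformation-theoretic hypotheses of Artin's criterion, and that the conditions imposed on $\underline{V}/S$ --- local finite presentation, quasi-compact quasi-separated diagonal, and, crucially, affine stabilizers --- are exactly what is needed to invoke the Hall--Rydh generalization beyond the Deligne--Mumford case. Once this classical representability is granted, the remaining steps are formal consequences of Theorem \ref{wise} and the stability of algebraicity under representable morphisms.
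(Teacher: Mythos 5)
Your proposal is correct and matches the intended derivation: the paper does not prove this statement at all but imports it verbatim as Corollary 1.1.1 of \cite{wise}, whose proof is exactly your argument---combine Theorem \ref{wise} with the classical algebraicity (Olsson/Aoki/Hall--Rydh, under precisely the added hypotheses on $\underline{V}/S$ and $\underline{X}/S$) of the underlying Hom stack, then transport algebraicity and the log structure along the representable forgetful morphism. The only point to phrase carefully is that the target of Theorem \ref{wise} is the pullback of the ordinary Hom stack $\Hom_{\textrm{Sch}_{\underline{S}}}(\underline{X},\underline{V})$ along $\mathrm{LogSch}^{fs}_S \to \mathrm{Sch}_{\underline{S}}$ rather than that Hom stack itself, which is exactly how the paper later uses the corollary to identify $\kg$ over $\Al$.
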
 

Before proving that the category $\kg$ is a logarithmic stack, we will first describe the logarithmic structures on $X$ and $V$, as the theorem requires as input $X$ and $V$ as logarithmic schemes. Since $V$ is a fixed toric variety, its logarithmic structure is the standard one coming from the toric structure. As $\big(X, \calL \cong f^* \calO_{V}(1)\big)$ is a polarized stable toric variety, we can apply the logarithmic structure defined in 3.1.12 of \cite{Olsson}.

\begin{theorem} The category $\kg$ is a logarithmic algebraic stack. \end{theorem}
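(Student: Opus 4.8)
The plan is to exhibit $\kg$ as a relative logarithmic $\Hom$-stack and then invoke the corollary to Theorem~\ref{wise}. Wise's results are stated for a \emph{fixed} morphism $X \to S$, whereas in $\kg$ the source stable toric variety is allowed to vary, so the first task is to produce a logarithmic moduli stack of sources over which to relativize. Let $\calM$ be the stack of polarized stable toric varieties of topological type $Q$, equipped fiberwise with the logarithmic structure of \cite[3.1.12]{Olsson}. By \cite[4.1.1, 4.1.2]{ab} the underlying stack is a proper algebraic stack of finite type, and Olsson's construction produces a fine and saturated logarithmic structure on it; thus $\calM$ is a logarithmic algebraic stack, and it carries a universal family $\pi\colon \calX \to \calM$.

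Next I would check that $\pi$ satisfies the hypotheses of Theorem~\ref{wise} and its corollary. The universal family is projective -- hence proper, and of finite presentation -- it is flat with geometrically reduced fibers by the definition of a stable toric variety, and the requirement that a logarithmic stable map be logarithmically smooth together with the integral and saturated condition of the defining diagram guarantees that $\pi$ is integral in Wise's sense. For the target I would take the constant family $V \times \calM \to \calM$ with its standard toric logarithmic structure: since $V$ is a projective toric variety, and in particular a scheme, this map is locally of finite presentation, has separated (so quasi-compact, quasi-separated) diagonal, and has trivial, hence affine, stabilizers. Because Wise's theorem is formulated over a base that is an algebraic space, I would pass to a smooth atlas $U \to \calM$, apply the theorem over $U$, and descend to conclude algebraicity over $\calM$.

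Granting this, the corollary to Theorem~\ref{wise} yields a logarithmic algebraic stack
\[
\mathcal{H} \;=\; \Hom_{\mathfrak{Logsch}^{fs}_{\calM}}\!\big(\calX,\, V \times \calM\big)
\]
over $\calM$, parametrizing all logarithmic maps from the universal source to $V$; since $\calM$ is itself logarithmically algebraic, so is $\mathcal{H}$. It then remains to identify $\kg$ with the substack of $\mathcal{H}$ cut out by the remaining conditions in the definition of a logarithmic stable map of toric varieties: $T$-equivariance of the map, finiteness of $f$, and the polarization identification $\calL \cong f^*\calO_V(1)$. Finiteness is an open condition, while $T$-equivariance and the isomorphism of linearized sheaves are closed conditions; together they carve out a logarithmic algebraic substack of $\mathcal{H}$, which is exactly $\kg$.

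The step I expect to be the main obstacle is the interface between the moduli of sources and Wise's machinery. Concretely, one must verify that Olsson's logarithmic structure makes $\pi\colon \calX \to \calM$ not merely logarithmically smooth but genuinely \emph{integral} (and saturated), since this is the precise input Wise requires; and one must then confirm that the equivariance, finiteness, and polarization conditions are representable and cut out the substack of the expected type rather than something larger. Matching the moduli problem defining $\kg$ with the relative $\Hom$-stack while keeping careful track of the logarithmic structures on both source and target throughout is where the real care is needed.
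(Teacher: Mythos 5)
Your architecture---relativize Wise's theorem over a moduli stack of sources and then cut out the remaining conditions---is genuinely different from the paper's, but it rests on an input you have not secured: the algebraicity of $\calM$, the logarithmic stack of polarized stable toric varieties of type $Q$ \emph{without} a map to $V$. The results you cite for this, \cite[4.1.1, 4.1.2]{ab}, concern stable toric varieties \emph{over} $V$: it is precisely the finite map to $V$ that rigidifies the objects and makes their automorphism groups finite, which is what yields a proper algebraic stack of finite type. Once the map is dropped, every polarized stable toric variety has the full torus $T$ inside its automorphism group, so your $\calM$ has positive-dimensional stabilizers; in particular its diagonal is not proper, it is not separated, and it is certainly not ``a proper algebraic stack of finite type'' as you assert. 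Its algebraicity as a logarithmic stack is a substantive theorem---essentially what Olsson develops for log broken toric varieties in Chapter 3 of \cite{Olsson}---and cannot be obtained by quoting \cite{ab} and adding a log structure fiberwise. Since everything downstream (the universal family $\pi$, the Hom stack $\mathcal{H}$, the substack) is built over $\calM$, this is a foundational gap rather than a citation quibble.

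The cutting-out step also needs more than you give it. Finiteness is indeed open, and the polarization matching can be handled by semicontinuity (a line bundle on a connected, reduced, proper fiber is trivial iff both it and its inverse have sections), but $T$-equivariance is not obviously a closed condition: the standard argument---the equalizer of two maps to a separated target is closed, and agreement then holds over a closed locus of the base---requires the source to be proper over the base, whereas $T \times_S \calX \to S$ is not proper. By contrast, the paper sidesteps both issues at once: it observes that Alexeev--Brion's stack $\Al$, already known to be algebraic, \emph{is} the underlying non-logarithmic Hom stack $\Hom_{\mathrm{Sch}}(\underline{X},\underline{V})$ (equivariance, finiteness, and the polarization are built into its moduli problem), that $\kg$ is the corresponding logarithmic Hom stack, and that Theorem \ref{wise} makes the forgetful map $\kg \to \Al$ representable by logarithmic algebraic spaces; algebraicity of $\kg$ then follows from that of $\Al$, with no separate moduli of sources and no condition-cutting at all. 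If you want to salvage your route, you must replace the appeal to \cite{ab} by an actual construction (or citation) of Olsson's stack of polarized log stable toric varieties and supply the representability of the equivariance locus.
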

\begin{proof}  In \cite{wise}, the author considers $\underline{X}$ as the scheme $X$ with the trivial logarithmic structure. Since a logarithmic map is a map of schemes and a map of logarithmic structures, in this case we see that determining the algebraicity of $\Hom_{\textrm{LogSch}^{fs}_S}(\underline{X},\underline{V})$ is tantamount to showing the algebraicity of $\Hom_{\textrm{Sch}_{\underline{S}}}(\underline{X},\underline{V})$. This is because one has a morphism from the category of $LogSch_S$ to $Sch_{\underline{S}}$ and then $\Hom_{\textrm{LogSch}^{fs}_S}(\underline{X},\underline{V})$ arises as the pullback of $\Hom_{\textrm{Sch}_{\underline{S}}}(\underline{X},\underline{V})$ through this map. Note that $\Al$ is the same as $\Hom_{\textrm{Sch}_{\underline{S}}}(\underline{X}, \underline{V})$, and the $\Hom$ space on the left is nothing other than $\kg$.

As  $\Al$ is an algebraic stack (see \cite{ab} Remark 4.1.2), the previous corollary allows us to conclude that $\kg$ is a logarithmic algebraic stack. \end{proof}

\section{The equivalence} \label{5}

\subsection{Proof of main theorem} 
We begin by discussing the two desired maps between the logarithmic stacks. 

\begin{prop} \label{map1} There exists a map $[ V \sslash_C H ] \to \kg$. \end{prop}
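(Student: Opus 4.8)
The plan is to obtain the map directly from the universal family. Since $\kg$ is a category fibered in groupoids over $\mathfrak{Logsch}^{fs}$, a morphism of stacks $[V \sslash_C H] \to \kg$ is the same datum as an object of $\kg$ over the base $[V \sslash_C H]$, i.e.\ a logarithmic stable map over $[V \sslash_C H]$ together with an $H$-equivariant logarithmic map to $V$ of the prescribed topological type. The natural candidate is exactly the universal family $\calU \to [V \sslash_C H]$ arising from the datum $(F', N_{\sigma'}, N)$, equipped with its tautological toric morphism $f \colon \calU \to V$ and the polarization $\calL := f^* \calO_V(1)$. So the whole content of the proposition is to check that this family satisfies every clause in the definition of a logarithmic stable map, after which the map to $\kg$ is produced by the moduli interpretation of the stack.

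First I would assemble the properties already established. The morphism $\calU \to [V \sslash_C H]$ is flat and integral by the integrality proposition, and its geometric fibers are connected stable toric varieties in the sense of Alexeev by the corollary on fibers. Because the morphism is integral with reduced fibers, it is saturated; hence it is integral and saturated, which is precisely the condition that the definition of a logarithmic stable map identifies with flatness plus logarithmic smoothness with reduced stable-toric-variety fibers. Setting $\calL = f^*\calO_V(1)$ gives the polarization on each fiber: since $f$ is finite on fibers (each component maps by inclusion onto an orbit closure $\overline{He_\sigma} \subset V$), the pullback $\calL$ is ample, so the fibers are genuinely \emph{polarized} stable toric varieties over $V$, as required. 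To pin down the discrete data, I would take $Q$ to be the topological type of the general fiber $\overline{H \cdot x}$; the combinatorial description of the special fibers via the cycles $E_\kappa = \sum_{\sigma \in \calN_0(\kappa)} c(\sigma, L)\,\overline{H e_\sigma}$ and the polytopes attached to their components shows that the topological type is constant and equal to $|Q|$, so the family carries exactly the data $\Gamma = (T, V, Q)$.

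With these verifications in hand I would conclude formally: the universal family is a bona fide family over the logarithmic stack $[V \sslash_C H]$, so base change along any $T \to [V \sslash_C H]$ yields a logarithmic stable map over $T$ with map to $V$; this assignment is an object of $\kg\big([V \sslash_C H]\big)$, and the $2$-Yoneda description of the stack then produces the desired morphism $[V \sslash_C H] \to \kg$.

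I expect the main obstacle to be the comparison of logarithmic structures, rather than any of the flatness or fiber-type statements (which are already recorded). Specifically, objects of $\kg$ are defined using the polarized logarithmic structure of Olsson attached to a polarized stable toric variety, whereas $\calU$ comes equipped with the toric logarithmic structure coming from the datum $(F', N_{\sigma'}, N)$. The delicate point is to check that, fiberwise, these two logarithmic structures agree (up to the canonical isomorphism $\calL \cong f^*\calO_V(1)$), so that the toric family genuinely represents a family of the logarithmically polarized objects parametrized by $\kg$. Once this identification is made, together with the logarithmic smoothness of the toric morphism $\calU \to [V \sslash_C H]$, the remaining requirements follow immediately from the results of the previous sections.
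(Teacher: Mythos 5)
Your proposal takes exactly the paper's approach: the paper's entire proof is the one-line observation that the Chow quotient stack, via its universal family $\calU \to [V \sslash_C H]$ with its map to $V$, is itself a family of logarithmic stable toric varieties mapping to $V$, hence defines the desired morphism by the moduli interpretation of $\kg$. Your write-up is a correct and considerably more detailed verification of that same idea, assembling the flatness, integrality, reduced-fiber, and fiber-type statements from Section 3 that the paper leaves implicit.
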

\begin{proof} The existence of this map follows as the Chow quotient stack is itself a family of logarithmic stable toric varieties mapping to $V$. \end{proof}

Before discussing the second desired map between the logarithmic stacks, we show that they are bijective on geometric points.

\begin{lemma} \label{bij} The map of stacks $i: \Chow \to \kg$ is bijective on geometric points. \end{lemma}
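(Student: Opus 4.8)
The plan is to argue directly at the level of isomorphism classes of objects over a geometric logarithmic point, treating surjectivity and injectivity separately. Recall that $i$ sends a geometric point of $\Chow$ to the fiber of the universal family $\calU \to \Chow$ over that point, which by the corollary above is a connected stable toric variety mapping to $V$, i.e. a geometric point of $\kg$. For surjectivity I would start with a geometric point of $\kg$, namely a logarithmic stable map $f \colon X \to V$ over a logarithmic point $(\Spec k, M)$. Forgetting the logarithmic structure, $\underline{X} \to V$ is a stable toric variety whose irreducible components map finitely and $H$-equivariantly onto translates of $H$-orbit closures in $V$, so pushing forward the fundamental class produces an effective algebraic cycle $f_*[\underline{X}]$ of the dimension and homology class of a generic orbit closure $\overline{H \cdot x}$. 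Since the logarithmic structure forces $f$ into the main component, this cycle is a degeneration of such orbits and therefore lies in the closure of $T/H$ inside the Chow variety, i.e. in $V \sslash_C H$; this singles out a cone $\kappa \in G$. The remaining logarithmic and stacky enrichment is read off from the minimal structure $M$: by Theorem 3.19 the dual of the basic monoid $Q_\kappa$ is the moduli of tropical broken toric varieties of this type, so the tropicalization of $f$ is a point in the relative interior of $\kappa$, and assembling this data gives a geometric point $x$ of $\Chow$ with $i(x) = f$.

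For injectivity, suppose two geometric points $x_1, x_2$ of $\Chow$ satisfy $i(x_1) = i(x_2)$. Pushing forward the fundamental class of the underlying stable toric variety of $i(x_j)$ recovers the algebraic cycle $\sum_{\sigma \in \calN_0(\kappa_j)} c(\sigma, L)\, \overline{H e_\sigma}$ attached to the image of $x_j$ in $V \sslash_C H$; since $V \sslash_C H$ is by construction a subvariety of the Chow variety, on which the cycle map is injective, equality of the cycles forces the two images in $V \sslash_C H$ to coincide. The logarithmic and stacky structure over a given point of $V \sslash_C H$ is then canonically prescribed by the toric stack datum $(G, Q_\kappa, Q)$, so $x_1 = x_2$ as geometric points of $\Chow$. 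Both directions can alternatively be packaged through the universal property (Theorem \ref{univ}) together with the identification of $V \sslash_C H$ with the normalization of $\Alm$ (Lemma 5.3.5 of \cite{ab}), which confirms that the underlying coarse spaces already agree and reduces the claim to matching the enrichment.

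The delicate point is surjectivity. A priori an object of $\kg$ is merely \emph{some} stable toric variety satisfying the stability and logarithmic smoothness (integral and saturated) conditions, and the real content is to show that such an object genuinely arises as a fiber of $\calU$: equivalently, that its associated cycle lands in $V \sslash_C H$ rather than in some other degeneration, and that its basic monoid is \emph{exactly} the $Q_\kappa$ of the matching cone. This is where logarithmic smoothness is essential, since it is what forces the combinatorial type of $X$ to refine the projection of the fan $F$, pinning the tropical type to a cone of $G$ and the minimal monoid to $Q_\kappa$ by Theorem 3.19. Reconstructing the full logarithmic and stacky datum on the nose, and not merely the underlying cycle, is the heart of the argument.
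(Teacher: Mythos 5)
Your proposal has a genuine gap at exactly the point you yourself flag as ``the heart of the argument.'' The step ``Since the logarithmic structure forces $f$ into the main component, this cycle is a degeneration of such orbits and therefore lies in the closure of $T/H$ inside the Chow variety'' is asserted, not proved, and it is essentially the entire content of the lemma (indeed, of the Main Theorem): at this stage of the paper nothing yet tells you that a logarithmic stable map over a log point cannot lie over one of the extraneous components of $\Al$. Appealing to Theorem 3.19 does not close the gap either: that theorem describes the monoids $Q_\tau$ of the Chow quotient stack, but to ``read off'' the log structure of $f$ from it you would already need to know that the tropical type of $X$ is governed by a cone of $G$ and by the fan of $\calU$ --- which presupposes that $f$ arises from the Chow quotient. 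Note also that a purely pointwise argument cannot invoke the universal property directly, since Theorem \ref{univ} is a statement about toric \emph{families} of toric $H$-stacks, and a single broken fiber over a log point is not such a family.

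The paper's proof supplies precisely the missing mechanism, and it is a deformation argument rather than a cycle argument. By Olsson's result (proof of Lemma 3.7.6 in \cite{Olsson}), the stable toric variety $X$ over $\Spec(\bC)$ is pulled back from a standard family $Y \to \bA(P)$; concretely, $X$ is a fiber of the family $\bA(R) \times_{\bA(P)} \bA(R)$, where $R$ is the chart of the log structure on the point. Logarithmic smoothness of $\kg$ (Lemma \ref{logsmooth}) then lets the map $X \to V$ extend \'etale-locally to a family around the given point; this extended object \emph{is} a toric family of toric $H$-stacks mapping to $V$, to which Theorem \ref{univ} applies. Since $\Chow$ is the terminal object of $\calC_V$, the extended family factors (uniquely) through $\calU \to \Chow$, and restricting to the fiber shows the given geometric point of $\kg$ is pulled back from a geometric point of $\Chow$ --- yielding the bijectivity claim. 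Your injectivity discussion is roughly fine (geometric points of the Deligne--Mumford stack $\Chow$ biject with those of its coarse space, and the cycle is recovered from the map), but without the deformation step your surjectivity argument is circular, so the proposal as written does not prove the lemma.
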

\begin{proof}   It suffices to show that every family over $\Spec(\bC)$, that is $f: X \to V$ from a stable toric variety $X$ to a projective toric variety $V$  over $\Spec(\bC)$, comes from the Chow quotient stack.  To show this, we apply the universal property of the Chow quotient stack discussed above.

 In \cite{Olsson} (see in particular, the proof of Lemma 3.7.6), Olsson proves that any stable toric variety $X$ over $\Spec(\bC)$ is pulled back from one of the \textit{standard families}: $$Y \to S = \bA(P)$$ for some $P$. Notice that a logarithmic map $\Spec(\bC) \to \bA(P)$ corresponds to a homomorphism of monoids $P \to R$, where $R$ is the chart defining the logarithmic structure on $\Spec(\bC)$. Therefore the stable toric variety $X$ over $\Spec(\bC)$ is a fiber of the family $\bA(R) \times_{\bA(P)} \bA(R)$ considered as a logarithmic map. 
 
Since $\kg$ is logarithmically smooth (see Lemma \ref{logsmooth}), the map on the fiber $X \to V$ extends \'etale locally to a map around $\Spec(\bC) \in \bA(R)$ and thus we can apply the universal property of the Chow quotient to get that $X \to \Spec(\bC)$ is actually pulled back from the Chow quotient stack. \end{proof}

To show the existence of a map in the other direction we use the universal property of the Chow quotient stack, Theorem \ref{univ}. However, since the universal property requires toric structure, we are first required to prove that $\kg$ is a toric stack.

\begin{theorem} \label{kgtoric} The stack $\kg$ is a toric stack. \end{theorem}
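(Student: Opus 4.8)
The plan is to apply the Geraschenko--Satriano recognition criterion, Theorem~\ref{gs}, rather than to write down a toric stack datum by hand; producing the fan of $\kg$ directly would essentially require knowing the isomorphism with the Chow stack, i.e. the Main Theorem itself, whereas the criterion characterizes toric stacks intrinsically. The ambient hypotheses of Theorem~\ref{gs} are already in place: we showed in the previous section that $\kg$ is a logarithmic algebraic stack, and it is of finite type over $\bC$ (characteristic $0$) because it is representable over $\Al$ by the theorem of Wise~\cite{wise} and $\Al$ is of finite type by \cite{ab}. It therefore remains to exhibit a torus action with a dense open torus and to verify the three numbered conditions of the criterion on the underlying stack.

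First I would produce the torus and its dense orbit. The torus $T$ acting on $V$ acts on $\kg$ by post-composition: for $(f\colon X \to V)\in\kg$ and $t\in T$, translation by $t$ on $V$ yields a new stable map $t\cdot f$. Because the objects parametrize $H$-broken orbits inside $V$, the subtorus $H$ acts trivially and the effective action is by the quotient torus $T/H=T(Q)$, where $Q=N/L$; this matches the torus of $V\sslash_C H$. The dense open substack is the locus of irreducible stable toric varieties, i.e. single orbit closures $\overline{H\cdot x}\hookrightarrow V$. Such an object has trivial automorphisms, since the finite map $f$ to $V$ rigidifies it, and two of them coincide exactly when the points differ by an element of $H$. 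Hence this substack is a scheme, $T/H$-equivariantly isomorphic to $T/H$, as the criterion requires.

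Next I would dispatch conditions (2) and (3). For (3), the automorphism group of a stable toric map is a subgroup of the torus acting on $X$, hence a diagonalizable group scheme, which is linearly reductive in characteristic $0$; these stabilizers are moreover finite, consistent with the Deligne--Mumford Chow stack. For (2), affine diagonal, I would argue that $\Delta_{\kg}$ classifies isomorphisms between two families of stable toric maps, which are cut out by the equivariance and $V$-compatibility constraints inside affine Hom-schemes; equivalently, the representable morphism $\kg\to\Al$ of \cite{wise} combined with the affineness of the diagonal of $\Al$ forces the diagonal of $\kg$ to be affine.

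The main obstacle is condition (1), normality, which cannot be imported from the Main Theorem without circularity, since Theorem~\ref{kgtoric} is an ingredient of its proof. My plan is to deduce it from logarithmic smoothness: $\kg$ is logarithmically smooth (Lemma~\ref{logsmooth}), and a stack that is log smooth over the base field is \'etale-locally smooth over the spectrum of a saturated monoid algebra, hence has at worst toroidal --- in particular normal --- singularities; therefore $\underline{\kg}$ is normal. (One could instead try to exploit the bijective morphism $i\colon\Chow\to\kg$ of Lemma~\ref{bij} out of the normal Chow stack, but the log-smoothness route is cleaner and assumes less about $i$ than is available at this stage.) With all three conditions verified, Theorem~\ref{gs} shows that $\kg$ is a GS toric stack; since its maximal cones are full-dimensional and its stabilizers are finite, it is presented by a toric stack datum in the sense of Definition~2.1, i.e. a toric stack in our convention via Remark~\ref{2stacks}. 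This is precisely the toric structure needed to invoke the universal property, Theorem~\ref{univ}, in the final step of the Main Theorem.
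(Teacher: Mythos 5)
Your overall strategy is the same as the paper's: verify the hypotheses of the Geraschenko--Satriano criterion (Theorem~\ref{gs}) and conclude via Remark~\ref{2stacks}. Your treatment of the torus action (post-composition, with $H$ acting trivially via the translation diagram), of normality via logarithmic smoothness (Lemma~\ref{logsmooth}), and of linearly reductive stabilizers (finite automorphism groups in characteristic $0$) all match the paper's proof in substance.

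The genuine gap is in your verification of condition (2), affine diagonal. You claim that representability of $\kg \to \Al$ together with affineness of the diagonal of $\Al$ ``forces'' the diagonal of $\kg$ to be affine. This does not follow: factoring $\Delta_{\kg}$ as $\kg \to \kg \times_{\Al} \kg \to \kg \times \kg$, the second map is indeed affine (it is a base change of $\Delta_{\Al}$), but representability of $\kg \to \Al$ only makes the first map, the relative diagonal $\Delta_{\kg/\Al}$, a \emph{monomorphism}. A finite-type monomorphism is separated and locally quasi-finite, hence at best quasi-affine (think of an open immersion); so your argument yields a quasi-affine diagonal, which is not what Theorem~\ref{gs} as stated requires. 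To upgrade the relative diagonal to a closed immersion you would need $\kg \to \Al$ to be separated, i.e.\ uniqueness of the logarithmic structure on a family over a valuation ring, which you have not argued. The paper closes exactly this gap by a different, more involved route: it first shows the diagonal of $\kg$ is quasi-finite (Lemma~\ref{quasifinite}, using representability over $\Al$ and finiteness of automorphisms), then proves $\kg$ is \emph{proper} --- checking the valuative criterion by lifting one-parameter families to the proper Chow stack via Proposition~\ref{map1} and using the bijectivity on geometric points of Lemma~\ref{bij}, and obtaining finite type from the surjection $\Chow \to \kg$ --- so that the diagonal is proper and quasi-finite, hence finite, hence affine. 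Your similar but vaguer first suggestion (isomorphisms ``cut out inside affine Hom-schemes'') has the same problem: Isom-spaces of polarized families are a priori only quasi-projective over the base. A secondary, smaller issue: you assert without proof that the locus of irreducible stable toric varieties is \emph{dense} in $\kg$; at this stage of the paper the available tool for that is again Lemma~\ref{bij} (bijectivity with the irreducible stack $\Chow$), and some such argument should be cited rather than asserted.
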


\begin{proof}
We will first show that the stack $\kg$ satisfies the conditions of Theorem \ref{gs}, thus showing that it is a GS toric stack. Then, it follows that $\kg$ is also a toric stack via Remark \ref{2stacks}.

We are first required to exhibit the existence of an action of a torus, $T'$ on $\kg$, and a dense open substack which is $T'$-equivariantly isomorphic to $T'$. First notice that $T$ acts on a map $f: X \to V$ by translating the map $f \mapsto t \cdot f$ where $t \in T$. Now we must show that this action descends to an action of $T'$, that is we need to show that $h\cdot f = f$, where $h \in H$. This requires an isomorphism $V \to V$ commuting with the maps $f: X \to V$ and $h \cdot f: X \to V$. But this map is  multiplication by $h$:

\[
\begin{CD}
X  @>id>> X\\
@VfVV @V Vh\cdot fV\\
V @>h>> V 
\end{CD} 
\]

Recall from Theorem \ref{gs}, that we must show that $\kg$ is normal, has affine diagonal, and that its geometric points have linearly reductive stabilizers.\\

Before proving normality, we recall the following theorem from \cite{ale}:

\begin{theorem}\cite[Corollary 3.1.26]{Olsson}\label{vanish} Let $X$ be a stable toric variety, then $H^p(X, \calO_X) = 0$ for all $p > 0$. \end{theorem}

Now normality follows from proving that $\kg$ is logarithmically smooth:

\begin{lemma} \label{logsmooth} The logarithmic algebraic stack $\kg$ is logarithmically smooth, hence normal. \end{lemma}
\begin{proof}

 The logarithmic deformation theory of a logarithmic map $f:X \rightarrow V$ of two logarithmically smooth schemes is governed by the cohomology groups of the cone of the complex $f^{*}T_V^{\log} \rightarrow T_X^{\log}$. The cohomology long exact sequence for the cone becomes \begin{align*}
0 \rightarrow H^0(f^*T_V^{\log}) \rightarrow H^0(T_X^{\log}) \rightarrow Def(f)
\rightarrow H^1(f^*T_V^{\log}) \rightarrow H^1(T_X^{\log}) \rightarrow Ob(f) \rightarrow \cdots
\end{align*}

\noindent  The logarithmic tangent bundle on a stable toric variety $X$ is $\calO_X^{d}$, where $d = \dim X$. Therefore, by Theorem \ref{vanish}, all higher cohomology of the logarithmic tangent bundle of a stable toric variety vanishes. Since $V$ is a toric variety and $X$ is a stable toric variety, the obstruction group $Ob(f)$ is trivial for every geometric point $f \in \kg$. It follows that the obstruction sheaf on $\kg$ vanishes, and thus $\kg$ is logarithmically smooth.  \end{proof}

Before discussing properties of the diagonal morphism, we must show that adding a logarithmic structure does not alter the automorphisms of the underlying objects. This follows immediately from the representability statement of \cite{wise} mentioned in Theorem \ref{wise}.

To show that the logarithmic algebraic stack $\kg$ has affine diagonal, we will first show that the diagonal is quasi-finite. We will then show properness, which then allows us to conclude that the diagonal map is finite. Finally, note that finite diagonal implies affine diagonal and so this is sufficient.

\begin{lemma} \label{quasifinite} The logarithmic algebraic stack $\kg$ has quasi-finite diagonal. \end{lemma}
\begin{proof}

Consider the following cartesian diagram for any map $g$:

\begin{center}
\begin{tikzcd}
\kg \times_{\kg \times \kg} \Spec(k) \ar{r}\ar{d}{}& \Spec(k) \ar{d}{g}\\
  \kg \ar{r}& \kg \times \kg
\end{tikzcd}
\end{center}

Then quasi-finite diagonal would follow from the fiber product being a finite set. Elements of the fiber product are of the form $( f, p, \phi)$ where $f \in \Al$, $p$ is the point $\Spec(k)$ and $\phi$ is an isomorphism from the pair $(f,f)$ to $(h,k)$ where $(h,k)$ is the image of the point $p$ under the above map $g$. Then this is nothing but a pair of isomorphisms $(f \to h, f \to k)$ and, if we can show there are finitely automorphisms in $\kg$, it would follow that this fiber product is a finite set. 

Since the map $\kg \to \Al$ is representable by Theorem \ref{wise}, and since $\Al$ has finite automorphisms (Remark 4.1.2 (3) of \cite{ab}), it follows that $\kg$ does as well. Consequently, the diagonal of $\kg$ is quasi-finite.

\end{proof}

\begin{lemma} The logarithmic algebraic stack $\kg$ is proper, and therefore has finite diagonal. \end{lemma}
\begin{proof} We verify the valuative criterion. Since $\kg$ has an open dense torus acting on it, it suffices to check on one-parameter subgroups. Take a one-parameter subgroup in $\lambda$ in $\kg$ and consider its lift to $\Chow$ using the map defined above in Proposition \ref{map1}. Since the Chow stack $\Chow$ is proper, there is a unique limit of this lift. We will show that the map $\Chow \to \kg$ is bijective on geometric points, which allows us to conclude that the image of this limit in $\kg$ must also be unique.

Since by Lemma \ref{bij} $i: \Chow \to \kg$ is bijective on geometric points, we conclude that the stack $\kg$ satisfies the valuative criterion of properness.  Properness will follow from demonstrating that the stack $\kg$ is of finite type. 

The stack $\kg$ is locally of finite type by Theorem \ref{wise},  so to show finite type  it suffices to show that $\kg$ is quasi-compact. By \cite[\href{http://stacks.math.columbia.edu/tag/04YA}{Tag 04YA}]{stacks-project} this will follow if we have a surjective morphism $\calV \to \kg$ such that $\calV$ is of finite type. We will choose $\calV$ to be Chow stack, as it is of finite type, and Theorem \ref{bij} shows that we have a surjective morphism on geometric points. By 3.5.3 of \cite{ega},  surjectivity of a morphism on geometric points implies surjectivity everywhere (over $\bC$). 

Thus $\kg$ has finite diagonal as it is proper and has quasi-finite diagonal (Lemma \ref{quasifinite}).  \end{proof}

Finally, we must show that the logarithmic algebraic stack $\Al$ has linearly reductive stabilizers, but this follows (in characteristic 0) since the automorphism groups are finite. Thus, we have shown that $\Al$ is a GS toric stack. Therefore, by Remark \ref{2stacks} the stack $\kg$ is a toric stack. 

\end{proof}

We now show that the universal family $\calX \to \kg$ is also a toric stack. 

\begin{prop} The universal family $\calX$ is a toric stack. \end{prop}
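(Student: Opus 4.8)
The plan is to verify that $\calX$ satisfies the hypotheses of the Geraschenko--Satriano criterion, Theorem~\ref{gs}, exactly as was done for $\kg$ in the proof of Theorem~\ref{kgtoric}, and then to invoke Remark~\ref{2stacks} to upgrade the resulting GS toric stack to a toric stack in the sense of Definition 2.1. The relevant torus here is not $T' = T/H$ but the big torus $T$ of $V$ itself: each stable toric variety $X$ in the universal family carries a $T$-action for which its map to $V$ is equivariant, and these assemble into a $T$-action on the total space $\calX$. Over the dense open torus $T/H \subset \kg$ the fiber is a single free $H$-orbit, so the locus of $\calX$ lying over $T/H$ is $T$-equivariantly isomorphic to $T$; this supplies the required dense open substack.

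For normality I would argue via logarithmic smoothness. The universal family $\calX \to \kg$ is flat and logarithmically smooth with reduced fibers by the very definition of a logarithmic stable map, and $\kg \to \Spec(\bC)$ is logarithmically smooth by Lemma~\ref{logsmooth}; hence the composite $\calX \to \Spec(\bC)$ is logarithmically smooth. By Kato's structure theorem a logarithmically smooth stack over a field is toroidal and therefore normal. I emphasize that, although the fibers of $\calX \to \kg$ are the non-normal stable toric varieties, this is no obstruction to normality of the total space---just as the total space of a nodal family can be smooth while its special fibers are singular.

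For affine diagonal I would factor the absolute diagonal as
\[
\calX \xrightarrow{\ \Delta_{\calX/\kg}\ } \calX \times_{\kg} \calX \longrightarrow \calX \times \calX,
\]
where the second map is the base change of $\Delta_{\kg}$ along $\calX \times \calX \to \kg \times \kg$. Since the fibers of $\calX \to \kg$ are projective the morphism is separated, so $\Delta_{\calX/\kg}$ is a closed immersion and hence affine; and $\Delta_{\kg}$ is finite---hence affine---by the proof of Theorem~\ref{kgtoric}, so its base change is affine. A composite of affine morphisms being affine gives the claim. For the final hypothesis, the stabilizer of a geometric point of $\calX$ is an extension of the (finite) stabilizer of its image in $\kg$ by the stabilizer inside the fiber, which is a diagonalizable group arising from the toric-stack structure of the stable toric variety; in characteristic $0$ both are linearly reductive, and an extension of linearly reductive groups is linearly reductive.

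The main obstacle I anticipate is the first step: pinning down the $T$-action and, in particular, exhibiting the dense open substack $T$-equivariantly isomorphic to $T$ in a way that is manifestly functorial on the universal family, rather than merely pointwise. Once the correct torus $T$ (not $T/H$) is identified and the open locus of irreducible fibers is understood as the family of identity $H$-orbits sweeping out $T$, the remaining conditions follow formally as above. With $\calX$ established as a toric stack, the triple $(\calX \to \kg \to V)$ becomes an object of $\calC_V$, which is precisely what is needed to apply the universal property, Theorem~\ref{univ}, in order to produce the map $\kg \to \Chow$ completing the proof of the main theorem.
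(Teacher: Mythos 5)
Your verification of normality, affine diagonal, and linearly reductive stabilizers is sound and close in substance to what the paper does (the paper inherits these from $\kg$, using that $\pi\colon \calX \to \kg$ is representable, proper, and logarithmically smooth; note that representability makes the stabilizers inside the fibers trivial, not merely diagonalizable, so your extension argument simplifies). The genuine gap is precisely in the step you yourself flag as the main obstacle, and your proposed resolution of it fails. The fibers of $\calX \to \kg$ do \emph{not} carry $T$-actions with $T$-equivariant maps to $V$: they are $k$-dimensional stable toric varieties, $k = \dim H$, acted on by $H$, and their maps to $V$ are only $H$-equivariant --- the generic fiber maps onto an orbit closure $\overline{H\cdot x}$ with $x \in T$, which is not a $T$-invariant subvariety of $V$. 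So there is no fiberwise $T$-action to assemble; and even if there were, a fiberwise action has every orbit contained in a fiber of dimension $k < r = \dim \calX$, so it could never produce a dense orbit. Relatedly, your identification of the dense open substack is incorrect: since $\calX \to \kg$ is proper, the fiber over a point of the dense torus $T/H \subset \kg$ is the complete orbit closure $\overline{H\cdot x}$, not a single free $H$-orbit (a free orbit is isomorphic to $(\bC^*)^k$, which is not proper), so the locus of $\calX$ lying over $T/H$ is an $r$-dimensional compactified family strictly containing the torus, not a torus.

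The paper's construction avoids both problems by making the action move points between fibers. An $S$-point of $\calX$ is a family $f\colon X \to V$ over $S$ together with a section $s\colon S \to X$; choosing a splitting $T \cong T/H \times H$ and writing $t = t_1 h_1$, one lets $t$ act by $(f,s) \mapsto (t_1 \circ f,\ h_1\cdot s)$, where $t_1$ translates the map to $V$ (thereby moving the underlying point of $\kg$) and $h_1$ moves the section through the fiberwise $H$-action; one then checks that this is independent of the chosen splitting up to canonical isomorphism. The dense open torus is likewise not the preimage of $T/H \subset \kg$ but $f^{-1}(T) \subset \calX$, the preimage of the big torus of $V$ under the universal map, identified with $T$ by $p \mapsto f(s(p))$, and one verifies the action restricts there to translation. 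Your remaining steps would go through essentially unchanged once the action and the open torus are set up this way.
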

\begin{proof} First, note that by the definition of the moduli problem, the universal projection morphism $\pi: \calX \to \kg$ is flat, logarithmically smooth, proper, and representable. Since we have shown above that $\kg$ is logarithmically smooth and proper with finite diagonal, it follows that $\calX$ is also logarithmically smooth (hence normal) and proper with finite diagonal.

Finally, by the chart criterion of logarithmic smoothness (\cite{kato2} 3.5), the projection $\calX \to \kg$ is locally a toric variety over a toric stack, so locally of global type. Thus, to show that $\calX$ is a toric stack it suffices to show that there is an open dense torus inside $\calX$ acting on $\calX$ whose action extends the action of the torus to itself. We begin by describing the action.\\

Describing an action of a group $G$ on a stack $\mathcal{X}$ is equivalent to describing a $G$ action on $\Hom(S,\mathcal{X})$ for each $S$. Note that a morphism 
\begin{align*}
\xymatrix {S \ar[r] & \mathcal{X} \ar[d]^{\pi} \\ & \kg}
\end{align*} 
is equivalent to the data of the composed map $S \rightarrow \kg$ and a section $$S \rightarrow \mathcal{X} \times_{\kg} S$$ of the projection $\mathcal{X} \times_{\kg} S$. Since a map $S \rightarrow \kg$ is a family of broken toric varieties to $V$, we see that the $S$ points of $\mathcal{X}$ are diagrams

\begin{align*}
\xymatrix{ X \ar[r]^{f} \ar[d]^{p} & V \\ S \ar@/^/[u]^{s}  &}
\end{align*}
 
\noindent as above, where $p \circ s = id$ is a section. We are going to define an action of the torus $T$ of $V$ on such diagrams. Choose an isomorphism $T \cong T/H \times H$, where $H \subset T$ is the original subgroup, and where $T/H$ denotes abusively a complementary subgroup, which will be isomorphic to $T/H$ . We may then uniquely write every $t \in T$ as $t_1h_1$, where $t_1 \in T/H$, $h_1 \in H$. Then $t$ acts on the family $(f,s)$ by $(t_1f,h_1s)$: 

\begin{align*}
\xymatrix{ X \ar[r]^{t_1f} \ar[d]^{p} & V \\ S \ar@/^/[u]^{h_1s}  &}
\end{align*}
 
\noindent Observe that this does not depend on the choice of splitting: If we write $t=t_1h_1=t_2h_2$, we have a commutative diagram

\begin{align*}
\xymatrix{X \ar[r]^{h_2h_1^{-1}} & X \ar[r]^{t_2f} \ar[d]^{p} & V \\S \ar@/^/[u]^{h_1s} \ar[r]_{=}& S \ar@/^/[u]^{h_2s}  &}
\end{align*}

\noindent and thus an isomorphism $(t_1f,h_1s)$ with $(t_2f,h_2s)$. This defines an action of $T$ on $\mathcal{X}$. We now claim that $\mathcal{X}$ contains an open dense torus $T$, and this action restricts to the canonical multiplication action of $T$ on itself. 
Thus $\mathcal{X}$ is a toric stack.  

\begin{lemma}
The torus T is open and dense. 
\end{lemma}  

\begin{proof}
Consider the commutative diagram 
\begin{align*}
\xymatrix {\mathcal{U} \ar[r]^{j} \ar[d] & \mathcal{X} \ar[d] \ar[r]^{f} & V \\ [V //_C H] \ar[r]^{i} & \kg & }
\end{align*}
\noindent Denote the composed map $\mathcal{U} \rightarrow V$ by $g$. We evidently have $T \cong g^{-1}(T)$ by the explicit combinatorial description of the morphism $g$ on the level of fans. But then $$g^{-1}(T) = j^{-1}f^{-1}(T).$$ Since $i$ is bijective, so is $j$, and thus $T$ is identified with $f^{-1}(T) \subset \mathcal{X}$, which is open. Furthermore, note that since $T$ is dense in $\mathcal{U}$, it also is in $\mathcal{X}$. 
\end{proof}

\noindent The torus $f^{-1}(T)$ can be identified with the functor that assigns to a scheme $S$ a diagram 

\begin{align*}
\xymatrix{ X \ar[r]^{f} \ar[d]^{p} & V \\ S \ar@/^/[u]^{s}  &}
\end{align*}
 
\noindent where now each fiber of $X$ is unbroken, and $s$ maps into the locus where the action of $H$ is free - that is, the torus $H$. Such a diagram is identified with an element of $\Hom(S,T)$ by sending a point $p \in S$ to $f \circ s (p) \in T$; Conversely, given a map $m:S \rightarrow T$, we define the family $H \times S \rightarrow V$, mapping each point $(h,p)$ to $h\cdot m(p)$, with the constant section $(1,p)$. 

\begin{lemma}
The action of $T$ on $f^{-1}(T)$ defined above is the natural action of $T$ on itself. 
\end{lemma}

\begin{proof}
Consider a family 
\begin{align*}
\xymatrix{ X \ar[r]^{f} \ar[d]^{p} & V \\ S \ar@/^/[u]^{s}  &}
\end{align*}
which corresponds to the map $S \rightarrow T$ mapping $p$ to $f \circ s (p)$. An element of $t$ acts on $f \circ s (p)$ by sending it to $t\big(f \circ s\big)(p)$. On the other hand, writing $t=t_1h_1$, the action we defined above sends the family to 
\begin{align*}
\xymatrix{ X \ar[r]^{t_1f} \ar[d]^{p} & V \\ S \ar@/^/[u]^{h_1s}  &}
\end{align*}
\noindent which corresponds to the map $S \rightarrow T$ sending $p$ to $\big(t_1f\big)\circ \big(h_1s\bigskip
) (p) = t_1h_1\big(f \circ s\big) (p) = t \big(f \circ s\big) (p)$ since $f$ is by definition $H$ equivariant. 
\end{proof}

Thus, we have demonstrated that $\calX \to \kg$ is also a toric stack, which puts us in a position to use Theorem \ref{univ}

\end{proof}

Combining Theorem \ref{kgtoric} with Theorem \ref{univ} gives us the following theorem:

\begin{prop} \label{map2} There exists a map $\kg \to [V \sslash_C H]$ such that any family in $\kg$ is obtained via pullback through the universal family $\calU \to \Chow$.  We have the following commutative diagram:

\begin{center}
$\begin{CD}
\calX @>>> \calU\\
@VVV @VVV\\
\kg @>>> \Chow
\end{CD}$
\end{center}

\end{prop}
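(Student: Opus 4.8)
The plan is to recognize the universal family of $\kg$ as an object of the category $\calC_V$ and then to extract the desired map by terminality, which is precisely the content of Theorem \ref{univ}. By the preceding Proposition, $\calX$ is a toric stack and the projection $\calX \to \kg$ is a morphism of toric stacks; together with the universal logarithmic stable map $\calX \to V$, this is the candidate object of $\calC_V$. So the first task is to verify that this candidate satisfies the defining conditions of a toric family of toric $H$-stacks mapping to $V$.

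First I would check the relative conditions. Flatness, properness, and equidimensionality with reduced fibers are built into the moduli problem defining $\kg$: by construction the universal projection $\calX \to \kg$ is flat, logarithmically smooth, proper, and representable, with reduced stable toric variety fibers. The $H$-equivariance of $\calX \to \kg$, and the identification of $H$ with the kernel of the induced map of tori $T(\Lambda) \to T(\Pi)$, are read off from the explicit $T$-action produced in the preceding Proposition: under the splitting $T \cong T/H \times H$, the subgroup $H$ acts only through the section $s$, i.e. along the fibers, so that the residual $T/H$-action descends to $\kg$ and $H$ sits inside the torus of $\calX$ as precisely the kernel of the projection to the torus of $\kg$. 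Finally, the universal map $\calX \to V$ is $H$-equivariant directly from the definition of a logarithmic stable map. Hence $(\calX \to \kg,\, \calX \to V)$ is an object of $\calC_V$.

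With this in place, I would invoke terminality. By Theorem \ref{univ}, the family $\calU \to [V \sslash_C H]$ is the terminal object of $\calC_V$, so there is a unique morphism of families from $\calX \to \kg$ to $\calU \to [V \sslash_C H]$. Unwinding the definition of a morphism in $\calC_V$, this morphism is exactly a commutative square, which is the diagram asserted in the statement; its lower horizontal arrow is the desired map $\kg \to [V \sslash_C H]$.

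The remaining point, and the one I expect to require the most care, is upgrading this square from merely commuting to a genuine pullback, so that every family in $\kg$ is obtained by pullback from $\calU$. Terminality by itself yields only existence and uniqueness of the square. To see that it is cartesian I would argue on the level of toric stack data: all four stacks share the same ambient lattice $N$, and, by the construction of the universal family as the minimal modification (the fiber-product fan $p^{-1}(\kappa) \cap \sigma$ together with its minimal monoids), the toric stack datum of $\calX$ is exactly the fiber product over $G$ of the datum of $\calU$ with that of $\kg$, which is the combinatorial avatar of a cartesian square. Alternatively, one can pull back $\calU$ along $\kg \to [V \sslash_C H]$ and observe that, by the moduli interpretation of $\kg$, the resulting family over $\kg$ is classified by the identity morphism and hence coincides with $\calX$. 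Either argument closes the gap, completing the construction of the map $\kg \to [V \sslash_C H]$ together with its pullback property.
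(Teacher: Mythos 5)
Your proposal is correct and takes essentially the same route as the paper: both treat $\calX \to \kg$ (toric by the preceding proposition) together with the universal map $\calX \to V$ as an object of $\calC_V$, and then apply terminality of $\calU \to \Chow$ (Theorem \ref{univ}) to produce the map $\kg \to \Chow$ and the commutative square.

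Where you go beyond the paper is in the cartesianness of the square. The paper's own proof is terse on this point: it reads the pullback clause simply as ``the map factors through the universal family,'' which suffices for its purposes, since the subsequent main theorem only needs maps in both directions in order to invoke Lemma \ref{keylemma}. Of your two suggested arguments for cartesianness, be careful with the first: it is circular at this stage, because identifying the toric stack datum of $\calX$ as the fiber product of the datum of $\calU$ with that of $\kg$ over $G$ presupposes knowing the fan of $\kg$ and its relation to $G$, which is precisely what the main theorem (proved \emph{after} this proposition) establishes; also, the four stacks do not all share the ambient lattice $N$ --- the families $\calX$, $\calU$ live in $N$, while the bases live in $Q = N/L$. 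Your second, moduli-theoretic argument is the right one, but ``classified by the identity'' requires justification: one should note that the pulled-back family and $\calX$ agree over the dense torus of $\kg$, and that $\kg$ is normal and separated (it has finite diagonal), so the two classifying maps $\kg \to \kg$, agreeing on a dense open substack, must coincide.
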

\begin{proof} We have that $\calX \to \kg$ is a toric stack with a map to $V$. Furthermore, both toric stacks $\kg$ and $\Chow$ are equipped with the same underlying torus. Thus, we are able to use the universal property of the Chow quotient stack (Theorem \ref{univ}). This yields a map $\kg \to [V \sslash_C H]$ which factors through the universal family, since by Theorem \ref{univ}, $\Chow$ is the terminal object in the category of toric families of $H$-toric stacks with a map to $V$. 

\end{proof}

The existence of this map above, combined with the existence of the map in Proposition \ref{map1} is enough to conclude our main theorem.

 \begin{theorem} The two toric stacks $\kg$ and $\Chow$ are isomorphic. Furthermore, $\kg$ is a logarithmically smooth, proper, irreducible algebraic stack with finite diagonal. \end{theorem}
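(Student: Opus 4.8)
The plan is to obtain the isomorphism as an immediate application of Lemma \ref{keylemma}, which asserts that two toric stacks sharing the same underlying lattice and admitting morphisms in both directions must coincide. By this point all the required inputs are in place. Both $\kg$ and $\Chow$ are genuine toric stacks in the sense of Definition 2.1: the former by Theorem \ref{kgtoric}, and the latter by Lemma \ref{chowtoric}, whose toric stack datum is $(G, Q_\kappa, Q)$. Moreover Proposition \ref{map1} furnishes a morphism $\Chow \to \kg$, and Proposition \ref{map2}, extracted from the universal property of Theorem \ref{univ}, furnishes a morphism $\kg \to \Chow$. So the skeleton of the argument is: invoke the two morphisms, verify the hypothesis on lattices, and conclude via Lemma \ref{keylemma}.

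The one hypothesis that I would take care to spell out is that the two toric stacks carry the \emph{same} underlying lattice. This is precisely the observation recorded in the proof of Proposition \ref{map2}: both $\kg$ and $\Chow$ are equipped with the same torus $T/H$, hence with the same lattice $Q = N/L$. The point I would emphasize is that the morphisms of Propositions \ref{map1} and \ref{map2} are morphisms of toric stack data in the sense of Definition 2.3, covering the identity on $Q$, so that they fall within the scope of Lemma \ref{keylemma}. With both morphisms and the common lattice established, Lemma \ref{keylemma} forces the fans $F$ and $G$ to agree and forces the submonoids $N_\sigma$ and $Q_\kappa$ to agree cone by cone, yielding $\kg \cong \Chow$ as toric stacks, and therefore as logarithmic stacks.

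It then remains to record the listed properties. Logarithmic smoothness, and hence normality, of $\kg$ was proved directly in Lemma \ref{logsmooth} through the vanishing of the obstruction group, while properness together with finiteness of the diagonal were established in the subsequent properness lemma (using the quasi-finiteness of Lemma \ref{quasifinite}). Irreducibility I would read off from the toric structure itself: by Theorem \ref{kgtoric} the stack $\kg$ contains a dense open torus and is therefore irreducible; equivalently, one transports irreducibility across the isomorphism from $\Chow$, whose coarse space $V \sslash_C H$ is the closure of the torus $T/H$ inside the Chow variety and hence irreducible.

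The bulk of the real work has already been discharged in the earlier results — constructing the two morphisms and verifying that $\kg$ is a toric stack — so the present step is essentially bookkeeping: checking that the hypotheses of Lemma \ref{keylemma} are met. The chief subtlety to guard against is confirming that the two maps are honestly morphisms of toric stack \emph{data} covering the identity on $Q$, rather than merely morphisms of the underlying algebraic or logarithmic stacks, since it is exactly this toric compatibility, and not the bare existence of maps in both directions, that Lemma \ref{keylemma} exploits.
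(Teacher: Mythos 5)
Your proposal is correct and follows essentially the same route as the paper's own proof: establish that $\kg$ and $\Chow$ share the same underlying torus (hence lattice $Q$), observe that Propositions \ref{map1} and \ref{map2} supply morphisms in both directions compatible with the toric structure, and conclude by Lemma \ref{keylemma}, with the remaining properties (logarithmic smoothness, properness, finite diagonal, irreducibility) read off from the earlier lemmas. Your added emphasis that the maps must be morphisms of toric stack data over the identity on $Q$, and your explicit treatment of irreducibility, are faithful elaborations of what the paper leaves implicit.
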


\begin{proof} First we note that the two toric stacks and their universal families, $\calX \to \kg$ and $\calU \to \Chow$ have the same underlying tori. This shows that the two stacks have the same lattices, as the lattices are completely determined by the tori underlying the stacks. Furthermore, the maps given in Proposition \ref{map1} and Proposition \ref{map2} yield maps between the fans defining the stacks. Therefore, the conditions of Lemma \ref{keylemma} are satisfied and so the stacks are isomorphic.

\end{proof}

\begin{theorem} The logarithmic algebraic stack $\kg$ is isomorphic to $\Alm$, the normalization of the irreducible main component of $\Al$. \end{theorem}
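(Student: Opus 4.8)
The plan is to chain together the results already established in the paper rather than to prove anything from scratch. The key observation is that the previous theorem identified $\kg$ with the Chow quotient stack $\Chow$, and that the paper has recorded, in Lemma~5.3.5 of \cite{ab} (quoted earlier), the fact that the \emph{variety} $V \sslash_C H$ is the normalization of $\Alm$. Since the Chow quotient stack $\Chow$ has $V \sslash_C H$ as its coarse moduli space, and since $\kg \cong \Chow$, the isomorphism I want should fall out of combining these facts, together with the observation that the forgetful map $\kg \to \Al$ (which exists because a logarithmic stable map has an underlying stable map) lands in $\Alm$ and factors through the normalization.

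First I would invoke the preceding theorem to replace $\kg$ by $\Chow$, so it suffices to show $\Chow \cong \Alm^{\nu}$, the normalization of the main component. Next I would produce the comparison morphism: the underlying scheme functor sends a family of logarithmic stable maps to its underlying family of stable maps, giving a morphism $\kg \to \Al$; because $\kg$ is irreducible (being isomorphic to the irreducible stack $\Chow$) and its generic point parametrizes honest toric varieties mapping to $V$, the image lands in the closure of the locus of toric maps, i.e.\ in $\Alm$. Then I would use the universal property of normalization: $\kg$ is normal (it is logarithmically smooth by Lemma~\ref{logsmooth}, hence normal), so the map $\kg \to \Alm$ factors uniquely through the normalization $\Alm^{\nu}$, yielding $\kg \to \Alm^{\nu}$. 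Finally I would check this is an isomorphism. On coarse spaces it is exactly the normalization map $V \sslash_C H \to \Alm$ of \cite{ab}, which is the normalization by hypothesis, so the induced map on coarse spaces is an isomorphism; and since Lemma~\ref{bij} shows $\kg \to \kg$ (via $\Chow$) is bijective on geometric points and the diagonal is finite, the map is quasi-finite, proper, and birational between normal stacks, hence an isomorphism by Zariski's main theorem for stacks.

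The main obstacle I anticipate is the bookkeeping needed to upgrade the coarse-space normalization statement of \cite{ab} to a statement about the \emph{stacks} themselves, rather than merely their coarse moduli spaces. The normalization map in \cite{ab} is a statement about varieties, and one must be careful that the stacky structure on $\Chow$ matches the natural stacky structure on the main component $\Alm$ (coming from automorphisms of stable toric maps) and not just their coarse spaces. I would address this by appealing to Remark~4.1.2 of \cite{ab} (finiteness of automorphisms) together with the representability of $\kg \to \Al$ from Theorem~\ref{wise}, which guarantees the comparison morphism does not change the stabilizer groups; combined with the normality of $\kg$ and finiteness of the diagonal, the map is then a finite birational morphism of normal stacks that is bijective on geometric points and an isomorphism on stabilizers, forcing it to be an isomorphism onto $\Alm^{\nu}$.
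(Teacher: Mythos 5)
Your proposal is correct and takes essentially the same approach as the paper: both rest on the isomorphism $\kg \cong \Chow$, birationality of the forgetful map $\kg \to \Al$ over the dense locus of honest toric maps, normality of $\kg$ via logarithmic smoothness (Lemma \ref{logsmooth}), representability from Theorem \ref{wise}, and Zariski's Main Theorem. The only difference is cosmetic: the paper concludes directly that the finite-fibered, birational, representable forgetful map from the normal stack $\kg$ is itself the normalization map, whereas you first factor through the normalization and then verify the resulting map is an isomorphism using the coarse-space statement of \cite{ab}.
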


\begin{proof} Since $\kg$ is isomorphic to the toric stack $\Chow$, the forgetful map $\kg \to \Al$ has finite fibers. As this map is an isomorphism over the (dense) locus of toric varieties, the map is birational. Furthermore, since $\kg$ is normal by Lemma \ref{logsmooth}, and the forgetful map is representable by Theorem \ref{wise}, Zariski's Main Theorem implies that this map must be the normalization map. \end{proof}

\subsection*{Acknowledgments}
The authors would like to thank their advisor Dan Abramovich for his constant support, encouragement, and guidance throughout this project. The authors would also like to thank Jonathan Wise for providing them with an early version of \cite{wise}, Danny Gillam for some helpful discussions, and Anton Geraschenko and Matt Satriano for helpful comments on an earlier version of this paper. Finally, we thank the referees for several comments and suggestions which helped us improve the exposition of this paper.

\bibliography{ascher_molcho_arxiv}
\bibliographystyle{alpha}

\end{document}